\documentclass{amsart}
\usepackage{amsthm}
\usepackage{amsmath}
\usepackage{amssymb}
\usepackage{comment}
\usepackage{enumitem}
\usepackage{amsfonts}
\usepackage{mathtools}
\usepackage{hyperref}
\usepackage{todonotes}
\usepackage[utf8]{inputenc}

\DeclareMathOperator{\conv}{Conv}
\DeclareMathOperator{\wid}{width}
\DeclareMathOperator{\cay}{Cay}

\DeclareMathOperator{\di}{\delta^{\text{ideal}}}
\DeclareMathOperator{\bi}{\textit{b}^{\text{ideal}}}

\newcommand{\mc}{\mathcal}

\newcommand{\bs}{\boldsymbol}

\newcommand{\GL}{\mathrm{GL}}
\newcommand{\height}{\mathrm{ht}}
\newcommand{\R}{\mathbb{R}}
\newcommand{\Vol}{\mathrm{Vol}}
\newcommand{\id}{\mathrm{id}}
\renewcommand{\O}{\mathcal{O}}

\newcommand\precdot{\mathrel{\ooalign{$\prec$\cr
  \hidewidth\raise0.001ex\hbox{$\cdot\mkern0.6mu$}\cr}}}

\newtheorem{theorem}{Theorem}[section]
\newtheorem{def-prop}[theorem]{Definition-Proposition}
\newtheorem{prop}[theorem]{Proposition}
\newtheorem{conj}[theorem]{Conjecture}
\newtheorem{lemma}[theorem]{Lemma}
\newtheorem{cor}[theorem]{Corollary}

\theoremstyle{definition}
\newtheorem{ex}[theorem]{Example}
\newtheorem{quest}[theorem]{Question}
\newtheorem{defin}[theorem]{Definition}

\theoremstyle{remark}
\newtheorem*{remark}{Remark}

\title{Balance constants for Coxeter groups}

\author{Christian Gaetz}
\thanks{C.G. was supported by a National Science Foundation Graduate Research Fellowship under Grant No. 1122374.}
\address{Department of Mathematics, Cornell University, Ithaca, NY, USA.}
\email{\href{mailto:crgaetz@gmail.com}{{\tt crgaetz@gmail.com}}}
\author{Yibo Gao}
\address{Beijing International Center for Mathematical Research, Peking University, Beijing, CN.}
\email{\href{mailto:gaoyibo@bicmr.pku.edu.cn}{{\tt gaoyibo@bicmr.pku.edu.cn}}}

\date{\today}

\begin{document}

\begin{abstract}
The $1/3$-$2/3$ Conjecture, originally formulated in 1968, is one of the best-known open problems in the theory of posets, stating that the \emph{balance constant} of any non-total order is at least $1/3$.  By reinterpreting balance constants of posets in terms of convex subsets of the symmetric group, we extend the study of balance constants to convex subsets $C$ of any Coxeter group.  Remarkably, we conjecture that the lower bound of $1/3$ still applies in any finite Weyl group, with new and interesting equality cases appearing.

We generalize several of the main results towards the $1/3$-$2/3$ Conjecture to this new setting: we prove our conjecture when $C$ is a weak order interval below a fully commutative element in any acyclic Coxeter group (a generalization of the case of width-two posets), we give a uniform lower bound for balance constants in all finite Weyl groups using a new generalization of order polytopes to this context, and we introduce \emph{generalized semiorders} for which we resolve the conjecture.

We hope this new perspective may shed light on the proper level of generality in which to consider the $1/3$-$2/3$ Conjecture, and therefore on which methods are likely to be successful in resolving it.
\end{abstract}
\maketitle
\section{Introduction}

\subsection{The $1/3$-$2/3$ Conjecture}

Given a finite poset $P$ on $n$ elements, a \emph{linear extension} of $P$ is an order-preserving bijection $\lambda:P \to [n]$, where $[n]=\{1,2,\ldots,n\}$ has the natural order on the integers.  Let $x,y \in P$ and consider the quantity
\begin{equation}
\label{eq:def-delta-for-posets}
\delta_P(x,y)=\frac{|\{\text{linear extensions } \lambda:P \to [n] \text{ such that } \lambda(x) > \lambda(y) \}|}{|\{\text{linear extensions } \lambda:P \to [n]\}|}.
\end{equation}

The quantity $\delta$ is of considerable interest.  If $P$ represents random partial information about some underlying total order on the same ground set, then $\delta_P(x,y)$ gives information about the probability that $x$ actually precedes $y$ in the total order.  Unfortunately, neither the numerator nor denominator may be easily computed, as computing the number of linear extensions of a poset is known \cite{Brightwell-Winkler} to be $NP$-hard, even \cite{Dittmer-Pak} for the important case of two-dimensional posets which we will encounter later.  

Despite these difficulties, one could hope that there exist $x,y \in P$ with 
\[
\min(\delta_P(x,y), 1-\delta_P(x,y))
\]
large, so that the additional information of whether $x$ precedes $y$ would reduce the number of linear extensions, and thus the remaining uncertainty, as much as possible.  It thus makes sense to study the \emph{balance constant}
\[
b(P)=\max_{x,y} \min(\delta_P(x,y), 1-\delta_P(x,y)),
\]
to establish ``information-theoretic" bounds for the above problem \cite{Linial}.  It is in this context that the following well known conjecture has been made three times independently, by Kislicyn in 1968, Freedman in 1974, and Linial in 1984 \cite{Kislicyn,Linial}.

\begin{conj}[The $1/3$-$2/3$ Conjecture]
\label{conj:1-3-2-3}
For any finite poset $P$ which is not a total order, $b(P) \geq \frac{1}{3}$.
\end{conj}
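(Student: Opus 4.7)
The plan is to follow the approach of Kahn and Saks, reinterpreted through the order polytope $\O(P) = \{f: P \to [0,1] \mid x \leq_P y \Rightarrow f(x) \leq f(y)\} \subset \R^P$. This polytope admits a canonical triangulation whose simplices are indexed by linear extensions of $P$ and all have the same volume, so for any pair $x, y$ the value $\delta_P(x,y)$ equals $\Vol(\{f \in \O(P) : f(x) > f(y)\})$. The strategy is to reduce the conjecture to a log-concavity statement about slices of $\O(P)$, use that log-concavity to rule out pathological distributions of $\delta$-values, and finally extract a pair $(x,y)$ with $\delta_P(x,y) \in [1/3, 2/3]$.

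First I would establish the central analytic tool: for incomparable $x, y \in P$, the sequence $N_k(x,y) = |\{\lambda : \lambda(y) - \lambda(x) = k\}|$ is log-concave. This follows from the Alexandrov--Fenchel inequality applied to the mixed volumes of slices of $\O(P)$ perpendicular to the direction $e_y - e_x$ (equivalently, from Stanley's theorem). Second, I would fix $x$ whose rank in a random linear extension is maximally concentrated, and combine the inequalities $N_{k+1}^2 \geq N_k N_{k+2}$ across $k$ and across $y$ incomparable to $x$, together with the identity that $\sum_k N_k(x,y)$ equals the total number of linear extensions. The hope is that some $y$ must then satisfy $\delta_P(x,y) \in [1/3, 2/3]$, lest the log-concave profile degenerate in a way forcing $P$ to be a chain.

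The main obstacle --- the reason the conjecture has resisted proof for over fifty years --- is that pure log-concavity is provably too weak: the Kahn--Saks argument yields only $b(P) \geq 3/11$, refined by Brightwell--Felsner--Trotter to $(\sqrt{5}-1)/4 \approx 0.3090$, and the remaining gap to $1/3$ seems to require information beyond what Alexandrov--Fenchel alone provides. The extremal regime is conjecturally approached by disjoint unions of two-element chains (where $b(P) \to 1/3$), and closing the gap likely demands either a stronger correlation inequality adapted to the combinatorial structure of Hasse diagrams, or a structural dichotomy showing that posets far from this extremal family admit strictly better estimates. My effort would therefore concentrate on identifying such structural constraints --- for example, analyzing how the width of $P$ or the presence of long chains quantitatively improves the log-concavity bound --- rather than attempting to squeeze more out of the generic Alexandrov--Fenchel machinery.
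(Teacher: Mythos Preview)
The statement you are attempting to prove is Conjecture~\ref{conj:1-3-2-3}, which the paper presents as an \emph{open problem}, not a theorem. The paper does not contain a proof of this statement; it instead generalizes the conjecture to finite Weyl groups and establishes various partial results (the fully commutative case, generalized semiorders, and a uniform lower bound strictly weaker than $1/3$). There is therefore no ``paper's own proof'' against which to compare your proposal.

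Your proposal is not a proof either, and you effectively concede this yourself. You correctly outline the Kahn--Saks order-polytope strategy and the log-concavity of the sequence $N_k(x,y)$ coming from Alexandrov--Fenchel, but then acknowledge that this machinery yields only $b(P) \geq 3/11$ (or $(5-\sqrt{5})/10$ after Brightwell--Felsner--Trotter), not $1/3$. The final paragraph is a statement of where you would \emph{look} for the missing ingredient, not a description of what that ingredient is. A proof proposal that ends with ``closing the gap likely demands either a stronger correlation inequality\ldots or a structural dichotomy'' is a research plan, not an argument. The genuine gap is precisely the one you name: no known inequality on the profile $(N_k)$ is sharp enough to force some $\delta_P(x,y)$ into $[1/3,2/3]$, and you have not supplied one.
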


This conjecture has received considerable attention, with many weaker bounds and special cases having been established; see Brightwell's survey \cite{Brightwell} for a discussion.  

\subsection{Posets as convex subsets of the symmetric group}
\label{sec:intro-convex-sets}

Let $P$ be a poset on $\{p_1,\ldots, p_n\}$, so any linear extension $\lambda: P \to [n]$ may be thought of as a permutation $w_{\lambda} \in S_n$ with $w_{\lambda}(i)=\lambda(p_i)$.  Let
\[
C(P)=\{w_{\lambda} \: | \: \lambda \text{ a linear extension of $P$}\}
\]
be the set of these permutations as $\lambda$ ranges over all linear extensions of $P$; clearly the set $C(P)$ determines the poset $P$.  It is a folklore fact that the sets $C \subseteq S_n$ which come in this way from a poset are exactly the \emph{convex} subsets: those $C$ such that any element on a minimum-length path in the Cayley graph $\cay(S_n,S)$ between two elements of $C$ in the also lies in $C$. Here $\cay(S_n,S)$ is the Cayley graph for $S_n$ generated by the set $S$ of simple transpositions $(i \: {i+1})$.  In the dual picture of the braid arrangement, these are exactly the $C$ such that the union of the closed regions corresponding to the permutations in $C$ is a (convex) cone.

Taking the perspective of posets as convex subsets of the symmetric group, notice that 
\[
\delta_P(p_i,p_j)=\frac{|\{w \in C(P) \: | \: w(i) > w(j)\}|}{|C|}.
\]
The right-hand-side is the fraction of permutations in $C(P)$ with a given \emph{inversion}, and this perspective admits a natural generalization to convex sets in any Coxeter group.

\subsection{Balance constants for Coxeter groups}
See Section~\ref{sec:background} for background and definitions on Coxeter groups and Weyl groups.

For any convex set $C$ in a Coxeter group $W$ and any reflection $t$ we define
\[
\delta_C(t)=\frac{|C_t|}{|C|},
\]
where $C_t=\{w \in C \: | \: \ell(wt)<\ell(w)\}$ is the set of elements in $C$ having $t$ as an inversion; when $C$ is clear we often simply write $\delta(t)$.  In light of the discussion in Section~\ref{sec:intro-convex-sets} this definition is exactly analogous to (\ref{eq:def-delta-for-posets}), recovering $\delta_P(x,y)$ when $W=S_n$, $C$ is the convex set associated to $P$, and $t$ is the transposition swapping $\lambda(x)$ and $\lambda(y)$.  It thus makes sense to define the \emph{balance constant}
\[
b(C)=\max_t \min(\delta_C(t), 1-\delta_C(t)).
\]
Remarkably, when $W$ is a finite Weyl group, Conjecture~\ref{conj:1-3-2-3} appears still to hold:

\begin{conj}\label{conj:1-3-2-3-weyl}
Let $W$ be any finite Weyl group and $C \subseteq W$ a convex set which is not a singleton. Then $b(C) \geq \frac{1}{3}$.
\end{conj}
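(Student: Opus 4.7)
The plan is to adapt the two main strategies used for the classical $1/3$-$2/3$ Conjecture to the Weyl group setting: a direct combinatorial attack on well-structured subclasses of convex sets (small width, fully commutative, semiorder-like), and a volume/inequality attack via a convex-body model for $C$. Since the type $A$ restriction is itself a longstanding open problem, I do not expect a complete resolution of Conjecture~\ref{conj:1-3-2-3-weyl}; the realistic targets are a uniform explicit lower bound $b(C) \geq c_0 > 0$ valid in every finite Weyl group, plus the sharp bound $b(C) \geq 1/3$ in several natural subfamilies.

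First, I would construct a generalized order polytope for a convex set $C \subseteq W$. Since $C$ corresponds to a union of closed regions forming a convex cone in the Coxeter arrangement of $W$, intersecting this cone with a suitable $W$-invariant simplex (on which $W$ acts by measure-preserving piecewise-linear maps permuting chambers) produces a polytope $\Pi(C)$ with $\mathrm{vol}(\Pi(C))$ proportional to $|C|$. Each reflection $t$ gives a hyperplane $H_t$ of the arrangement, and in this model one should have $\delta_C(t) = \mathrm{vol}(\Pi(C) \cap H_t^-)/\mathrm{vol}(\Pi(C))$, so proving $b(C) \geq 1/3$ reduces to exhibiting a hyperplane $H_t$ cutting $\Pi(C)$ into two pieces each of volume at least $\tfrac{1}{3}\mathrm{vol}(\Pi(C))$.

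Second, in this geometric setup I would apply the Aleksandrov--Fenchel inequalities, following Kahn--Saks and Brightwell--Felsner--Trotter. Sliding a hyperplane $H_t$ parallel to itself through $\Pi(C)$ produces a one-parameter family of convex slices whose volumes form a log-concave sequence; this constrains how unbalanced the induced split can be when some slice passes through a vertex of $\Pi(C)$, and it should yield a uniform explicit constant $c_0$ (depending on the rank and number of reflections of $W$, but not on $C$) with $b(C) \geq c_0$. A well-chosen distinguished family of reflections — say, simple reflections, or reflections of minimal height — is likely needed so that the hyperplanes $H_t$ actually intersect $\Pi(C)$ in a useful way.

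The main obstacle will be the jump from a uniform constant to the sharp bound $1/3$: as with posets, equality forces a complete understanding of the extremal convex sets, and the abstract hints that new equality cases appear beyond the total-order examples familiar from type $A$. To make partial progress on the sharp statement, I would target two subfamilies. First, when $C = [e,w]$ is a weak order interval below a fully commutative $w$ in an acyclic Coxeter group, the heap of $w$ encodes all the relevant combinatorics and the problem should reduce to the width-two poset case of the classical conjecture, which is known. Second, I would define a ``generalized semiorder'' class by imposing a $2+2$-free type condition on the inversion/root-interval data of $C$, paralleling the classical semiorder setting where the $1/3$ bound is directly verifiable by an explicit choice of $t$.
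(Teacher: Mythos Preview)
Your high-level program matches the paper's quite closely: the statement is a \emph{conjecture} there as well, and the paper likewise settles for (i) a uniform lower bound via a generalized order polytope plus convex-geometric inequalities, and (ii) the sharp $1/3$ bound for fully commutative intervals and for a generalized-semiorder class. So as a research plan your outline is on target.

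That said, several of your intended reductions differ from what the paper actually does, and at least one would not go through as you describe it. In the fully commutative case you expect the problem to ``reduce to the width-two poset case of the classical conjecture.'' It does not: via the heap, $b([\id,w]_L)$ becomes the quantity $\bi(H_w)$, the maximal balance of the \emph{order-ideal} counts $\di(x;H_w)$, not of linear-extension counts. Linial's width-two theorem says nothing directly about $\bi$, and the paper instead gives a new argument: if $\bi(H_w)<1/3$, Lemma~\ref{lem:common-up-degree} forces every extremal ``common'' or ``uncommon'' element of $H_w$ to have two covers (resp.\ be covered twice), and then a graph built on these extremal elements, together with the acyclicity of the Coxeter diagram and the labeling constraints of Lemma~\ref{lem:heaps-and-graph}, leads to a contradiction. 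Your plan would need a replacement for this step.

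For the uniform bound, you invoke Aleksandrov--Fenchel in the Kahn--Saks/BFT style; the paper instead follows Kahn--Linial, using Brunn--Minkowski concavity of slice volumes and a double-cone comparison (Proposition~\ref{prop:bm}, Corollary~\ref{cor:bm}) applied to the order polytope $\O(C)=\bigcup_{w\in C}Q_w$ built from fundamental alcoves. The key technical input you would still need is Lemma~\ref{lem:order-polytope-centroid}, which produces a root $\beta$ with the centroid of $\O(C)$ close to $H_\beta$; without something like this the volume inequality alone does not pick out a good reflection. Finally, the paper's generalized semiorders are \emph{not} defined by a $\mathbf{2}+\mathbf{2}$-avoidance condition but as $C=W^A$ with $A$ an order ideal of the root poset; the proof hinges on an injection $C_\alpha\hookrightarrow C\setminus C_\alpha$ (Lemma~\ref{lem:semiorder-uncommon}) and a root-poset lemma (Lemma~\ref{lem:semiorder-orderideal}), so your proposed definition would require a different argument.
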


\begin{remark}
Conjecture~\ref{conj:1-3-2-3-weyl} could perhaps be extended to all finite Coxeter groups (equivalently, all finite real reflection groups).  However, the only new irreducible cases this would include are the dihedral groups, for which the conjecture is obvious, and the exceptional group of type $H_4$ (and its parabolic subgroup of type $H_3$), for which the conjecture becomes a finite check. The group $W$ of type $H_4$ has 60 reflections; the convex sets containing $\id$ are the $W^A$ (see Section~\ref{sec:background-cox-groups}) for $A$ a subset of the reflections. Thus a naive verification of the conjecture would involve iterating over these $2^{60}$ subsets, which is computationally intractable. We have been unable to cut down this computation sufficiently in order to complete it, although it may be within reach of a cleverer computational approach.

We often choose to work in the setting of finite Weyl groups to avoid additional technicalities; the results of Section~\ref{sec:fully-commutative}, though, apply in all finite Coxeter groups, and more generally in Coxeter groups with acyclic Coxeter diagrams. 
\end{remark}

In this paper we show that many known partial results towards Conjecture~\ref{conj:1-3-2-3} can be generalized to the context of Conjecture~\ref{conj:1-3-2-3-weyl}, or even further.  

The remainder of the paper is organized as follows: in Section~\ref{sec:background} we give background on Coxeter groups, Weyl groups, and convex subsets of these.  Section~\ref{sec:fully-commutative} resolves Conjecture~\ref{conj:1-3-2-3-weyl} in the case $C$ is an interval below a \emph{fully commutative} element of $W$ in the increased generality of acyclic Coxeter groups; this generalizes a classical result of Linial \cite{Linial} that Conjecture~\ref{conj:1-3-2-3} holds for width-two posets.  Section~\ref{sec:equality-cases} also gives examples of convex sets achieving equality in Conjecture~\ref{conj:1-3-2-3-weyl}; this is a richer set of examples than exists for posets, where there is (conjecturally) only one irreducible example.  Section~\ref{sec:semiorders} proves Conjecture~\ref{conj:1-3-2-3-weyl} in the context of \emph{generalized semiorders}, which we introduce; this is a generalization of Brightwell's result \cite{Brightwell-semiorder} for semiorder posets.  Section~\ref{sec:bounds-Weyl} gives a type-independent proof of a uniform lower bound $b(C) \geq \varepsilon > 0$ for Conjecture~\ref{conj:1-3-2-3-weyl}; this is inspired by Kahn and Linial's proof for posets \cite{Kahn-Linial} and relies on a new generalization of order polytopes.  Our generalized semiorders and generalized order polytopes may be of independent interest. Finally, Section~\ref{sec:examples} gives examples which shed light on whether the above results might be expected in even greater generality.

An extended abstract describing part of this work appears in the proceedings of FPSAC 2021 \cite{fpsac-abstract}.

\section{Background on Coxeter groups and root systems}
\label{sec:background}

We will work in several different levels of generality throughout the paper.  This section collects background and notation for general Coxeter systems in Section \ref{sec:background-cox-groups} and for finite Weyl groups (finite crystallographic Coxeter groups) and their associated root systems in Section \ref{sec:background-weyl-groups}.

\subsection{Coxeter groups}
\label{sec:background-cox-groups}
We follow the conventions of \cite{Bjorner-Brenti}. Let $(W,S)$ be a Coxeter system.  We write $\Gamma$ for the associated \emph{Coxeter diagram}, the graph with vertex set $S$ and an edge labelled $m_{ij}$ between vertices $s_i$ and $s_j$ whenever the quantity $m_{ij}$ giving the defining relation $(s_is_j)^{m_{ij}}=\id$ of $W$ is at least $3$.  We say $W$ is \emph{acyclic} if the graph $\Gamma$ contains no cycles, and \emph{irreducible} if $\Gamma$ is connected.  

For $w \in W$, the \emph{length} $\ell(w)$ is the smallest number $\ell$ such that $w=s_1 \cdots s_{\ell}$ with the $s_i \in S$.  Such an expression of minimal length is called a \emph{reduced word} or \emph{reduced expression}.  The \emph{left (resp. right) weak order} is the partial order on $W$ with cover relations $u \lessdot_L su$ (resp. $u \lessdot_R us)$ whenever $\ell(su)=\ell(u)+1$ (resp. $\ell(us)=\ell(u)+1$) and $s \in S$.  We write $\cay_L(W)$ and $\cay_R(W)$ for the left and right Cayley graphs for $W$ with respect to the generating set $S$, viewing these as undirected graphs, and often identifying them with the Hasse diagrams of the weak orders.

The set $T$ of conjugates of elements of $S$ is called the set of \emph{reflections}.  For $w \in W$ the \emph{right (resp. left) inversion set} $T_R(w)$ is $\{t \in T \: | \: \ell(wt)<\ell(w)\}$ (resp. $\{t \in T \: | \: \ell(tw)<\ell(w)\}$).  It is well known that
\[
|T_R(w)|=|T_L(w)|=\ell(w),
\]
and that weak order is characterized by containment of inversions sets:
\begin{align*}
   u &\leq_L v \iff T_R(u) \subseteq T_R(v) \\
   u &\leq_R v \iff T_L(u) \subseteq T_L(v).
\end{align*}

For $D \subseteq A \subseteq T$ we write $W_D^A$ for the set of elements in $W$ whose right inversion set lies between $D$ and $A$:
\[
W_D^A=\{w \in W \: | \: D \subseteq T_R(w) \subseteq A\}.
\]
The reader should not confuse this notation with similar notation, which is often used in other sources, for parabolic subgroups and quotients of $W$.

A subset $C \subseteq W$ is \emph{left (resp. right) convex} if it is convex with respect to the metric on $W$ determined by the natural graph distance in $\cay_L(W)$ (resp. $\cay_R(W)$).  That is, $C$ is convex if all elements of $W$ which lie on some minimum-length path in $\cay_L$ between $u$ and $v$ lie in $C$ whenever $u$ and $v$ do.  The \emph{left (resp. right) convex hull} $\conv_L(w_1,\ldots, w_d)$ (resp. $\conv_R(w_1,\ldots, w_d)$) of a collection of elements $w_1, \ldots, w_d \in W$ is defined to be the intersection of all left (resp. right) convex subsets of $W$ which contain $\{w_1,\ldots,w_d\}$; the convex hull is itself clearly convex.

\begin{remark}
We make the convention that when ``left" and ``right" are not specified it is assumed that we are working with left weak order, left Cayley graphs and convex sets, and so on.
\end{remark}

The following characterization of convex subsets of $W$ is due to Tits \cite{Tits}; another proof, with language and notation closer to our own, is given in \cite{Bjorner-Wachs-gen-quo}.

\begin{theorem}[Tits \cite{Tits}]
\label{thm:tits-convexity}
A set $C \subseteq W$ is left convex if and only if it is of the form $W_D^A$ for some $D \subseteq A \subseteq T$.
\end{theorem}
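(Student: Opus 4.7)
The plan is to reduce Tits' characterization to an explicit description of pairwise left convex hulls. The Key Lemma is that for any $u, v \in W$,
\[
\conv_L(u,v) = W_{T_R(u) \cap T_R(v)}^{T_R(u) \cup T_R(v)}.
\]
I would establish it from two basic facts about $\cay_L(W)$: first, each edge $x \sim sx$ with $\ell(sx) = \ell(x)+1$ adjoins exactly one reflection (namely $x^{-1}sx$) to the right inversion set, so the graph distance satisfies $d_{\cay_L}(u,v) = |T_R(u) \triangle T_R(v)|$; second, by a short symmetric-difference computation, $d(u,w) + d(w,v) = d(u,v)$ is equivalent to the sandwich $T_R(u) \cap T_R(v) \subseteq T_R(w) \subseteq T_R(u) \cup T_R(v)$. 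Thus $w$ lies on some $u$-to-$v$ geodesic precisely under the sandwich condition, and both inclusions in the Key Lemma follow.

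Granting the Key Lemma, the forward direction is immediate: for $u, v \in W_D^A$ and $w \in \conv_L(u,v)$, the chain $D \subseteq T_R(u) \cap T_R(v) \subseteq T_R(w) \subseteq T_R(u) \cup T_R(v) \subseteq A$ yields $w \in W_D^A$.

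For the backward direction, given a left convex $C$, set $D = \bigcap_{u \in C} T_R(u)$ and $A = \bigcup_{u \in C} T_R(u)$; the containment $C \subseteq W_D^A$ is automatic. The content is showing $W_D^A \subseteq C$: one wants, for each $w \in W_D^A$, a pair $u, v \in C$ with $T_R(u) \cap T_R(v) \subseteq T_R(w) \subseteq T_R(u) \cup T_R(v)$, whence $w \in \conv_L(u,v) \subseteq C$ by the Key Lemma and convexity of $C$. I expect constructing this pair to be the main obstacle, since one cannot in general find a single pair realizing $D$ and $A$ exactly. A candidate approach is induction on the spread $|A \setminus D|$: pick a reflection $t \in A \setminus D$ and partition $C = C^+ \sqcup C^-$ by whether $t \in T_R(\cdot)$, observing that each piece is itself left convex (a geodesic between two elements of $C^+$ has $t$ in every intermediate inversion set by the Key Lemma) with strictly smaller spread, then apply the inductive description; the compatibility bookkeeping between $D^\pm, A^\pm$ and $D, A$ is delicate. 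Alternatively, the cleanest route is geometric: a convex $C \subseteq W$ corresponds to a geometrically convex union of chambers $K_C = \bigcup_{w \in C} wC_0$ in the Tits cone, which is cut out as the intersection of the half-spaces bounded by the reflection hyperplanes $H_t$ for $t \in D \cup (T \setminus A)$; the chambers in $K_C$ are then exactly those $xC_0$ with $D \subseteq T_R(x) \subseteq A$, giving $C = W_D^A$ directly.
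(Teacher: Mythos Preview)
The paper does not supply its own proof of this theorem; it attributes the result to Tits and points to Bj\"orner--Wachs for an exposition in compatible language. There is therefore no in-paper argument against which to compare your proposal.

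On the proposal itself: your Key Lemma is correct. The identity $d_{\cay_L}(u,v)=|T_R(u)\,\triangle\,T_R(v)|$ holds, and the symmetric-difference count you describe does show that $d(u,w)+d(w,v)=d(u,v)$ is equivalent to $T_R(u)\cap T_R(v)\subseteq T_R(w)\subseteq T_R(u)\cup T_R(v)$. The forward direction (each $W_D^A$ is left convex) then follows cleanly from the sandwich inclusion.

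The backward direction, however, is only sketched, and neither route you outline closes as written. For the induction on $|A\setminus D|$: after splitting $C=C^{+}\sqcup C^{-}$ along some $t\in A\setminus D$, the new lower set $D^{+}=\bigcap_{u\in C^{+}}T_R(u)$ may strictly contain $D\cup\{t\}$, so a given $w\in W_D^A$ with $t\in T_R(w)$ need not satisfy $D^{+}\subseteq T_R(w)$; the inductive hypothesis $C^{+}=W_{D^{+}}^{A^{+}}$ then says nothing about $w$, and the argument stalls exactly at the ``delicate bookkeeping'' you flag. For the geometric sketch: the assertion that a gallery-convex $C$ yields a \emph{geometrically} convex union of closed chambers $K_C$, cut out precisely by the walls $H_t$ with $t\in D\cup(T\setminus A)$, is not an independent input but is equivalent to the theorem itself. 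Establishing that gallery convexity forces Euclidean convexity of $K_C$ is the substantive step in Tits' argument, so invoking it here is circular. If you want a self-contained combinatorial proof, the Bj\"orner--Wachs reference cited in the paper is the place to look.
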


If $W=W_1 \times W_2$ is a reducible Coxeter group, convex sets $C \subseteq W$ are products $C_1 \times C_2$ of convex sets $C_1 \subseteq W_1$ and $C_2 \subseteq W_2$.  This implies that $b(C)=\min(b(C_1), b(C_2))$, so it suffices to consider $W$ irreducible in Conjecture~\ref{conj:1-3-2-3-weyl}.

As the action of $W$ on $\cay_L$ by right multiplication is by graph automorphisms, it is clear that $C$ is convex if and only if $C \cdot w=\{cw \: | \: c \in C\}$ is for every $w \in W$.  Thus, choosing any $c \in C$ we may consider the translated convex set $C \cdot c^{-1}$ which now contains the identity and is equivalent to $C$ for the purposes of Conjecture~\ref{conj:1-3-2-3-weyl}.  Convex sets containing $\id$ are exactly the convex order ideals in left weak order, and by Theorem~\ref{thm:tits-convexity} these are the sets $W_{\emptyset}^A$, for which we will often write simply $W^A$. 

\subsection{Finite Weyl groups and crystallographic root systems}
\label{sec:background-weyl-groups}

For some of our results we will need to take advantage of additional structure present for finite crystallographic Coxeter groups (finite Weyl groups), some of which is outlined here.  We refer readers to Humphreys \cite{Humphreys} for a detailed exposition on the classical theory of root systems and Weyl groups.

Let $\Phi\subset E$ be a finite crystallographic root system of rank $r$, where $E$ is an ambient Euclidean space of dimension $r$ with a positive definite symmetric bilinear form $\langle-,-\rangle$, with a chosen set of positive roots $\Phi^+\subset\Phi$ and the corresponding simple roots $\Delta=\{\alpha_1,\ldots,\alpha_r\}\subset\Phi^+$. Let the \emph{fundamental coweights} $\omega_1^{\vee},\ldots,\omega_r^{\vee}$ be a dual basis of $\Delta$ with respect to $\langle-,-\rangle$, i.e., $\langle\alpha_i,\omega_j^{\vee}\rangle=\delta_{ij}$ where $\delta_{ij}$ is the Kronecker delta. 
For each root $\alpha\in\Phi$, the \emph{reflection across} $\alpha$, which can be thought of as reflection across the hyperplane orthogonal to $\alpha$, is defined as 
$$s_{\alpha}:x\mapsto x-\frac{2\langle \alpha,x\rangle}{\langle\alpha,\alpha\rangle}\alpha\in\GL(E).$$
The Weyl group $W=W(\Phi)$ is a finite subgroup of $\GL(E)$ generated by the $s_{\alpha}$, for $\alpha\in\Phi$, or equivalently, generated by the simple reflections $S=\{s_1,\ldots,s_r\}$ where $s_i=s_{\alpha_i}$.  

The root system $\Phi$ is called \emph{irreducible} if it cannot be partitioned into two proper subsets $\Phi_1\sqcup\Phi_2$ such that $\langle \beta_1,\beta_2\rangle=0$ for all $\beta_1\in\Phi_1$ and $\beta_2\in\Phi_2.$ Irreducible root systems are completely classified, with four infinite families, types $A_n$ ($n\geq1$), $B_n$ ($n\geq2$), $C_n$ ($n\geq2$) and $D_n$ ($n\geq4$), and five exceptional types, $E_6$, $E_7$, $E_8$, $F_4$ and $G_2$. Let $e_i$ denote the $i$-th standard basis vector in $\R^n$. We adopt the following conventions for the infinite families of irreducible root systems:
\begin{itemize}
\item Type $A_{n-1}$: $\Phi=\{e_i-e_j\:|\:1\leq i\neq j\leq n\}$, $\Phi^+=\{e_i-e_j\:|\:1\leq i<j\leq n\}$, $\Delta=\{e_i-e_{i+1}\:|\:1\leq i\leq n-1\}$.
\item Type $B_n$: $\Phi=\{\pm e_i\pm e_j,\ \pm e_i\:|\:1\leq i\neq j\leq n\}$, $\Phi^+=\{e_i\pm e_j,\ e_i\:|\:1\leq i<j\leq n\}$, $\Delta=\{e_i-e_{i+1}\:|\:1\leq i\leq n-1\}\cup\{e_n\}$.
\item Type $C_n$: $\Phi=\{\pm e_i\pm e_j,\ \pm 2e_i\:|\:1\leq i\neq j\leq n\}$, $\Phi^+=\{e_i\pm e_j,\ 2e_i\:|\:1\leq i<j\leq n\}$, $\Delta=\{e_i-e_{i+1}\:|\:1\leq i\leq n-1\}\cup\{2e_n\}$.
\item Type $D_n$: $\Phi=\{\pm e_i\pm e_j\:|\:1\leq i\neq j\leq n\}$, $\Phi^+=\{e_i\pm e_j\:|\:1\leq i<j\leq n\}$, $\Delta=\{e_i-e_{i+1}\:|\:1\leq i\leq n-1\}\cup\{e_{n-1}+e_n\}$.
\end{itemize}

The pair $(W,S)$ forms a finite Coxeter system, so the material of Section~\ref{sec:background-cox-groups} can be applied. Although the root systems of type $B_n$ and $C_n$ are different, the corresponding Coxeter systems are isomorphic. 

\begin{defin}
The \emph{root poset} is the partial order on the positive roots $\Phi^+$ such that $\alpha\leq\beta$ if $\beta-\alpha$ is a non-negative linear combination of the simple roots $\Delta$.  We will abuse notation by simply writing $\Phi^+$ for the root poset $(\Phi^+, \leq)$.
\end{defin}

The minimal elements of $\Phi^+$ are the simple roots $\Delta$. It is a classical fact that there exists a unique maximum $\xi$ in the root poset, called the \emph{highest root}. For $\alpha\in\Phi$, its \emph{height} is
\[
\height(\alpha)\coloneqq\langle\alpha,\omega_1^{\vee}+\cdots+\omega_r^{\vee}\rangle.
\]
That is, if $\alpha=c_1\alpha_1+\cdots+c_r\alpha_r$ then $\height(\alpha)=c_1+\cdots+c_r$. Clearly positive roots have positive heights and negative roots have negative heights, and the crystallographic condition ensures that all heights are integers, giving the root poset $\Phi^+$ a grading by height. The height of the root system $\Phi$ is the height of its highest root $\xi$, and we write $\height(\Phi)=\height(\xi)$. 

For $w\in W(\Phi)$, its \emph{inversion set} is $I_{\Phi}(w)=\{\alpha\in\Phi^+\:|\: w\alpha\in\Phi^-\}$; this is in bijection with $T_R(w)$ via the map $\alpha \leftrightarrow s_{\alpha}$ and we therefore sometimes write $C_{\alpha}$ for $C_{s_{\alpha}}$ and $\delta_C(\alpha)$ for $\delta_C(s_{\alpha})$.  For $A \subseteq \Phi^+$ we also write $W^A$ for the convex set $W^{\{s_a \: | \: a \in A\}}$ defined in Section~\ref{sec:background-cox-groups}. The \emph{Coxeter arrangement} is the central hyperplane arrangement in $E$ consisting of the hyperplanes
\[
H_{\alpha}\coloneqq\{x\in E\:|\:\langle x,\alpha\rangle=0\}
\]
for each $\alpha\in\Phi^+$. Regions of the Coxeter arrangement are called \emph{Weyl chambers} and are in bijection with the Weyl group $W$ in the following manner: for $w\in W$, the corresponding open Weyl chamber is
\[
R_w\coloneqq\{x\in E\:|\: \langle x,\alpha\rangle>0\text{ for }\alpha\in\Phi^+\setminus I_{\Phi}(w),\langle x,\alpha\rangle<0\text{ for }\alpha\in I_{\Phi}(w)\}.
\]
In particular, the \emph{fundamental Weyl chamber} $R_{\id}$ is $\{x\in E\:|\:\langle x,\alpha\rangle>0,\alpha\in\Phi^+\}$. It is now clear that the left Cayley graph $\cay_L(W)$ on $W$ is the same as the graph on the Weyl chambers where $R_w$ is connected to $R_u$ if and only if they are separated by a single hyperplane $H_{\alpha}$. As a result, $C\subseteq W$ is left convex if and only if $\bigcup_{w\in C}\overline{R_w}$ is a convex polyhedron. This geometric perspective will be used in Section~\ref{sec:bounds-Weyl}.

\section{The fully commutative case}
\label{sec:fully-commutative}

\subsection{Width-two posets and $321$-avoiding permutations}
\label{sec:width-two}
An \emph{antichain} in a poset $P$ is a collection of pairwise incomparable elements; the size of the largest antichain is the \emph{width} $\wid(P)$ of $P$.  The following result of Linial establishes Conjecture~\ref{conj:1-3-2-3} in the case $\wid(P)=2$.

\begin{theorem}[Linial \cite{Linial}]
\label{thm:width-2}
Let $P$ have width two. Then $b(P) \geq \frac{1}{3}$.
\end{theorem}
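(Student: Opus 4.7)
My plan is to follow Linial's original argument. By Dilworth's theorem, any width-two poset $P$ decomposes as a union of two chains $A: a_1 <_A \cdots <_A a_m$ and $B: b_1 <_B \cdots <_B b_n$ with $m+n=|P|$. Linear extensions of $P$ are then in bijection with monotone lattice paths from $(0,0)$ to $(m,n)$ that avoid a certain staircase region determined by the cross-chain relations: each relation of the form $a_i <_P b_j$ or $b_j <_P a_i$ forces the path to pass on a specified side of the cell indexed by $(i,j)$. In this picture, $\delta_P(a_i, b_j)$ for an incomparable pair is the fraction of allowed lattice paths that pass above (or below) the point $(i-1, j)$.

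Given this translation, I would search for an incomparable pair $(a_i, b_j)$ with $\delta_P(a_i, b_j) \in [1/3, 2/3]$ via a discrete intermediate-value argument. Choose a sequence of adjacent incomparable pairs starting from one where $a_i$ is so early in $A$ that it almost always precedes $b_j$ (so $\delta_P$ is close to $0$) and ending where $a_i$ is so late that it almost always follows $b_j$ (so $\delta_P$ is close to $1$); such a sequence exists because $P$ is not a total order. If consecutive terms of this sequence cannot differ too drastically, then $\delta_P$ must land in the target interval $[1/3, 2/3]$ at some point along the way.

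The main obstacle is the bounded-difference lemma that controls the jumps along this sequence. Concretely, for adjacent incomparable pairs, say $(a_i, b_j)$ and $(a_{i+1}, b_j)$ (or $(a_i, b_j)$ and $(a_i, b_{j+1})$), one must bound the ratio by which the proportion of paths going ``above'' vs.\ ``below'' a given lattice point can change when the reference point moves by one step. A clean route is to produce an explicit injection between the families of paths witnessing the numerator and denominator of $\delta_P$ at the two pairs, showing that the ratio $\delta_P/(1-\delta_P)$ changes by a factor of at most $2$ per step; this factor-of-$2$ bound is precisely what rules out a jump across $[1/3, 2/3]$. Constructing this injection in the presence of the forbidden staircase region is the combinatorial heart of the proof; it is exactly where the width-two hypothesis is used, since for wider posets there is no longer a single two-dimensional lattice model in which to perform the comparison.
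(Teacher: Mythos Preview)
Your plan follows Linial's original lattice-path argument, and as a sketch it is sound: the Dilworth decomposition into two chains, the bijection between linear extensions and staircase-avoiding paths, and the intermediate-value step driven by a factor-of-$2$ control on the ratio $\delta/(1-\delta)$ are exactly the ingredients of that proof. You have correctly identified the one substantive step you have not carried out, namely the injection that yields the factor-of-$2$ bound for adjacent incomparable pairs; once that is supplied the argument is complete.

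The paper, however, does not reproduce Linial's proof at all. It obtains Theorem~\ref{thm:width-2} as the $W=S_n$ specialisation of Theorem~\ref{thm:fully-comm}, and the proof of the latter is structurally quite different. Via Proposition~\ref{prop:inversion-fraction-equals-ideal-fraction} the balance constant $b([\id,w]_L)$ is rewritten as the \emph{order-ideal} quantity $\bi(H_w)$ on the heap poset $H_w$, so one is no longer counting linear extensions of $P$ but rather order ideals of $H_w$ containing a given element. The contradiction is then obtained not by an intermediate-value/jump argument along a single chain of comparisons, but by Lemma~\ref{lem:common-up-degree} (a single-cover element cannot be a boundary between the ``common'' region $\di>2/3$ and the ``uncommon'' region $\di<1/3$) together with a graph-theoretic argument: one builds an auxiliary directed graph on the maximal common and minimal uncommon elements, every vertex having out-degree two, and uses the fact that the Coxeter diagram $\Gamma$ is a tree to force an infinite strictly depth-increasing walk, which is impossible.

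What each approach buys: yours is self-contained and elementary, tailored to width two and needing nothing beyond the two-chain lattice model. The paper's route sacrifices that directness but is what makes the generalisation to arbitrary acyclic Coxeter groups possible: the lattice-path picture has no analogue outside type $A$, whereas the heap/order-ideal reformulation and the tree argument on $\Gamma$ are type-independent.
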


Although the width-two condition is very restrictive, all known equality cases $b(P)=\frac{1}{3}$ for Conjecture~\ref{conj:1-3-2-3} lie within this class of posets.  Indeed, it is conjectured \cite{Kahn-Saks} that for each $k>2$ there is a lower bound for $b(P)$ on width-$k$ posets which is strictly greater than $\frac{1}{3}$, with these bounds approaching $\frac{1}{2}$ as $k \to \infty$, so that Theorem~\ref{thm:width-2} covers those posets which are (conjecturally) closest to violating Conjecture~\ref{conj:1-3-2-3} (see the survey by Brightwell \cite{Brightwell} for a heuristic discussion).

The \emph{dimension} $\dim(P)$ of $P$ is the smallest number $d$ such that 
\[
C(P)=\conv(w_1,\ldots,w_d)
\]
for some $w_1,\ldots,w_d \in S_n$, or equivalently the smallest number of linear extensions needed to uniquely determine the poset $P$.  It was shown by Dilworth \cite{Dilworth} that any finite poset $P$ has dimension at most its width:
\[
\dim(P) \leq \wid(P).
\]
In particular, any poset of width two has order dimension two (the only posets of dimension one are the total orders, and these have width one).  

A poset $P=\{p_1,\ldots,p_n\}$ is \emph{naturally labelled} if the map $p_i \mapsto i, \forall i$ is a linear extension. Any naturally labelled two-dimensional poset $P$ has 
\[
C(P)=\conv(\id,w)=[\id,w]_L
\]
for some $w \in S_n$, and it is immediate from the definition of $C(P)$ that $P$ has width two if and only if the permutation $w$ avoids the pattern $321$, meaning that there are no $1 \leq i < j < k \leq n$ such that $w(i)>w(j)>w(k)$.

In this section we will generalize Theorem~\ref{thm:width-2} to all Coxeter groups $W$ with acyclic Coxeter diagrams; the role of $321$-avoiding permutations will be played by \emph{fully commutative elements} of $W$, introduced by Stembridge \cite{Stembridge}.

\subsection{Fully commutative elements in Coxeter groups}

For $(W,S)$ any Coxeter system and $w \in W$, we write $\mc{R}_w$ for the set of reduced words of $w$.  A well known result of Tits \cite{Tits-words} implies that all elements of $\mc{R}_w$ are connected by relations of the form
\[
s_is_j \cdots = s_js_i \cdots
\]
with $m_{ij} \geq 2$ factors on each side.  Applying such a relation to a reduced word is called a \emph{commutation move} when $m_{ij}=2$ and a \emph{braid move} otherwise.  Allowing only commutation moves determines an equivalence relation $\sim$ on $\mathcal{R}_w$, and the elements of $\mathcal{R}_w / \sim$ are called \emph{commutation classes}.

\begin{defin}[Stembridge \cite{Stembridge}]
An element $w \in W$ is called \emph{fully commutative} if $\mc{R}_w$ consists of a single commutation class.  Equivalently, $w$ is fully commutative if no reduced word for $w$ admits a braid move. 
\end{defin}

\begin{prop}[Stembridge \cite{Stembridge}]
\label{prop:fc-iff-321-avoider}
For $W=S_n$, a permutation $w$ is fully commutative if and only if it avoids the pattern $321$.
\end{prop}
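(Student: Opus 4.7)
The plan is to establish both directions of the equivalence separately, each by contrapositive.

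For the direction that fully commutative implies $321$-avoiding, I would suppose a reduced expression for $w$ contains a braid substring, factoring $w = u \cdot s_a s_{a+1} s_a \cdot v$ reducedly. A direct length computation shows that reducedness of $u \cdot s_a s_{a+1} s_a$ forces $u(a) < u(a+1) < u(a+2)$, so the intermediate permutation $u \cdot s_a s_{a+1} s_a$ has these three values at positions $a, a+1, a+2$ in strictly decreasing order --- a consecutive $321$ pattern. The subsequent right-multiplications by simple transpositions in $v$ are each length-increasing, and any such adjacent-transposition swap can only toggle the inversion status of the single value-pair at the two swapped positions, turning a non-inversion into an inversion. Hence no pre-existing value-pair inversion is destroyed, and the three pairwise inversions among our three values persist in $w$; these three values therefore appear in decreasing left-to-right order in $w$, giving a $321$ pattern.

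For the other direction, I would suppose $w$ contains a $321$ pattern and construct a reduced expression with a braid substring. The key reformulation is that $w$ admits a reduced expression ending in $s_p s_{p+1} s_p$ if and only if $w(p) > w(p+1) > w(p+2)$, so $w$ admits a reduced expression containing a braid if and only if there exists $w' \leq_R w$ in right weak order possessing a consecutive $321$ pattern. I would produce such a $w'$ by induction on $\ell(w)$: if $w$ itself has a consecutive $321$ take $w' = w$, else find a right descent $d$ of $w$ such that $w s_d$ still contains a $321$ pattern and apply induction to $w s_d$. To find such a $d$, fix a $321$ pattern $(i, j, k)$ with values $c > b > a$ in $w$ minimizing $k - i$ and, subject to that, $j - i$. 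The minimality forces (i) $w(m) < a$ for $m \in (i, j)$, (ii) $w(m) > a$ for $m \in (j, k)$, and (iii) $w(k-1) > b$ when $k - j \geq 2$. Since $(i, j, k)$ is not consecutive, either $k - j \geq 2$ (in which case position $k - 1$ is a right descent and $w s_{k-1}$ contains a $321$ at $(i, j, k-1)$, as the swap places $a$ at position $k-1$) or $k - j = 1$ and $j - i \geq 2$ (in which case position $i$ is a right descent and $w s_i$ contains a $321$ at $(i+1, j, k)$, as the swap places $c$ at position $i+1$).

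The main obstacle will be the case analysis deriving (i): I must check each of the subcases $w(m) > c$, $w(m) \in (b, c)$, and $w(m) \in (a, b)$ and use either the primary minimality of $k - i$ or the secondary minimality of $j - i$ to derive a contradiction (the triples $(m, j, k)$, $(i, m, j)$, and $(i, m, k)$ respectively). Once (i)--(iii) are in place, the inductive step and the final assembly of a reduced expression for $w$ containing a braid substring --- as a concatenation of reduced expressions for $w' \cdot s_p s_{p+1} s_p$ and $(w')^{-1} w$ flanking the braid --- follow straightforwardly.
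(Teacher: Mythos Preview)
The paper does not supply a proof of this proposition; it is quoted with attribution to Stembridge and used as input to the subsequent discussion. So there is no argument in the paper to compare yours against.

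Your proof is correct. Two small remarks. In the first direction you write the braid substring as $s_a s_{a+1} s_a$; strictly speaking a reduced word might instead contain $s_{a+1} s_a s_{a+1}$, but the braid relation itself converts one form to the other inside the reduced word, so the assumption is without loss of generality---worth one sentence. In the second direction your claim (iii) is true but superfluous: in the case $k-j \ge 2$ you only need (ii) to conclude $w(k-1) > a = w(k)$, which makes $k-1$ a right descent, and then $ws_{k-1}$ has values $c,b,a$ at positions $i,j,k-1$, giving the required $321$ pattern. You can simply drop (iii).
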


Theorem~\ref{thm:fully-comm} is the main theorem of this section, establishing Conjecture \ref{conj:1-3-2-3-weyl} for intervals below fully commutative elements in acyclic Coxeter groups; it is proven in Section~\ref{sec:fc-theorem}.  By Proposition~\ref{prop:fc-iff-321-avoider} and the discussion in Section~\ref{sec:width-two}, Theorem~\ref{thm:fully-comm} generalizes Theorem~\ref{thm:width-2}, which is the case $W=S_n$.

\begin{theorem}
\label{thm:fully-comm}
Let $W$ be a (not necessarily finite) Coxeter group with acyclic Coxeter diagram, and let $w \in W$ be a non-identity fully commutative element.  For the convex set $C=[\id,w]_L$ we have
\[
b(C)\geq \frac{1}{3}.
\]
\end{theorem}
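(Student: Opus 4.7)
The plan is to recast the theorem as a marginal probability statement for order ideals of the heap $H_w$, and then prove the resulting claim by induction on $\ell(w)$. For $w$ fully commutative, all reduced words of $w$ lie in a single commutation class, so the heap $H_w$ is canonically defined up to isomorphism. The standard map $u \mapsto F_u$, sending $u \in [\id, w]_L$ to the filter $F_u \subseteq H_w$ of positions forming a suffix of a reduced word of $w$ spelling $u$, is an order-preserving bijection onto the filters of $H_w$. Each $h \in H_w$ determines a reflection $t_h \in T_R(w)$ via conjugation in a reduced word, and because positions incomparable to $h$ in $H_w$ carry generators that commute with the generator at $h$, one checks that $T_R(u) = \{t_h : h \in F_u\}$. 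Therefore
\[
\delta_C(t_h) \;=\; \frac{|\{u \in C : h \in F_u\}|}{|C|} \;=\; 1 - \Pr[h \in I],
\]
where $I$ is a uniformly random order ideal of $H_w$, and the theorem reduces to finding some $h \in H_w$ with $\Pr[h \in I] \in [1/3, 2/3]$.

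To prove the reduced claim I would induct on $\ell(w) = |H_w|$. The base case $\ell(w) = 1$ gives $\Pr[h \in I] = 1/2$. For the inductive step, one can first try to focus on a minimum $h_0 \in H_w$ (corresponding to a left descent of $w$); the ideal decomposition
\[
|J(H_w)| \;=\; |J(H_w \setminus h_0)| + |J(H_w \setminus \uparrow h_0)|
\]
gives $\Pr[h_0 \in I] = |J(H_w \setminus h_0)|/|J(H_w)| \in [1/2, 1)$, which lies in $[1/3, 2/3]$ exactly when $|J(H_w \setminus h_0)| \leq 2\,|J(H_w \setminus \uparrow h_0)|$. This inequality can fail (e.g.\ when $H_w$ is a long chain, so the minimum has marginal close to $1$), so one must also examine non-minimum elements; a natural fallback is to walk along a chain of $H_w$ and use the monotonicity $\Pr[h \in I] \geq \Pr[h' \in I]$ whenever $h \leq h'$ to locate an element whose marginal has crossed into $[1/3, 2/3]$, invoking the inductive hypothesis on the FC element $s_{h_0} w$ (with heap $H_w \setminus h_0$) as needed.

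The main obstacle is exploiting the acyclicity of the Coxeter diagram in a uniform way. The analogous marginal statement fails for general finite posets --- for example $K_{2,2}$ has marginals $2/7$ and $5/7$, both outside $[1/3, 2/3]$ --- but any induced $K_{m,n}$ with $m, n \geq 2$ in the heap would force a $4$-cycle among generators in the Coxeter diagram, which is impossible in a forest; so acyclicity really is what rules out the bad cases. Linial's proof for $W = S_n$ uses the lattice-path description of linear extensions of two-chain posets and thus exploits the linear shape of the type $A$ Coxeter diagram directly. The challenge in our setting is to find the right structural lemma about heaps in a forest that drives the induction when the heap can have many minima and an unrestricted tree of underlying generators, and to combine the ratio inequalities coming from different choices of $h_0$ (or from moves between a minimum and the next element up) into a single uniform argument.
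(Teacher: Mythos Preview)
Your reduction to marginals of order ideals in the heap is exactly the paper's Proposition~3.6, and your observation that $K_{2,2}$ subheaps are excluded by acyclicity is correct. But the actual argument you sketch has a genuine gap: the ``walk along a chain and use monotonicity'' fallback does not locate an element with marginal in $[1/3,2/3]$, because along a single cover $h \lessdot h'$ the marginal can jump from above $2/3$ to below $1/3$. Your own equality examples illustrate this (and the paper's $D_4$, $B_3$, $E_6$ heaps have precisely one such jump). So the inductive scheme as stated does not close, and the proposal ends exactly where the real difficulty begins: you acknowledge needing ``the right structural lemma about heaps in a forest'' but do not supply one.

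The paper's proof fills this gap with a specific lemma and a global combinatorial argument rather than induction. The key observation (Lemma~3.8) is that if $\bi(H_w)<1/3$ and $x$ is \emph{maximal} among elements with $\di(x)>2/3$, then $x$ is covered by at least two elements; dually for minimal elements with $\di<1/3$. This is proved by a simple injection on ideals. One then builds a directed graph on the set of such extremal elements in which every vertex has out-degree two, and shows using Lemma~3.7 that the $S$-labels of the two out-neighbours of a vertex $g$ lie in different components of $\Gamma\setminus\{\sigma(g)\}$. Rooting the tree $\Gamma$, at least one out-edge from every vertex strictly increases the depth of the label, producing an infinite walk in a finite tree---a contradiction. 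Note that acyclicity is used through this depth argument, not merely through the exclusion of $K_{2,2}$; the latter alone is not enough, since the equality heaps in Figures~2--5 are $K_{2,2}$-free yet already sit at $\bi=1/3$.
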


\subsection{Heaps and weak order intervals}
\label{sec:heaps}

Let $(W,S)$ be any Coxeter system. Given an element $w \in W$ with a reduced word $w=s_{i_1} \cdots s_{i_{\ell}}=\bs{s} \in \mc{R}_w$ the associated \emph{heap poset} (or just \emph{heap}) $H_{\bs{s}}$ is the partial order $([\ell], \preceq)$ which is the transitive closure of the following relation:
\[
j \preceq k \text{ if $j \geq k$ and $m_{i_j i_k} \neq 2$},
\]
where the $m_*$'s are the edge labels of the Coxeter diagram of $W$.  We also give heaps the structure of $S$-labelled posets by associating with the vertex $j$ the simple reflection $s_{i_j}$.  As we may have $i_j=i_{j'}$, multiple vertices may receive the same $S$-label.  Heaps were introduced in a more general setting by Viennot \cite{Viennot} and applied to reduced words in Coxeter groups by Stembridge \cite{Stembridge}.

\begin{prop}[Stembridge \cite{Stembridge}]
\label{prop:heap-depend-on-commutation-class}
The heap poset $H_{\bs{s}}$, as an $S$-labelled poset, depends only on the commutation class of $\bs{s}$.  
\end{prop}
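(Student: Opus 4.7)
The plan is to reduce to the case of a single commutation move and then exhibit an explicit label-preserving poset isomorphism. Since the equivalence relation $\sim$ on $\mc{R}_w$ is generated by single commutations and $S$-labelled poset isomorphism is transitive, it suffices to show: if $\bs{s}=s_{i_1}\cdots s_{i_\ell}$ and $\bs{s}'=s_{i_1}\cdots s_{i_{j-1}}s_{i_{j+1}}s_{i_j}s_{i_{j+2}}\cdots s_{i_\ell}$ differ by a single commutation at positions $j,j+1$ (so $m_{i_j i_{j+1}}=2$), then $H_{\bs{s}}\cong H_{\bs{s}'}$ as $S$-labelled posets.

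The candidate isomorphism is the map $\phi:[\ell]\to[\ell]$ that transposes $j$ with $j+1$ and fixes the remaining indices. Label preservation is immediate: in $\bs{s}'$ the label at position $j$ is $s_{i_{j+1}}$, which equals the label at $\phi(j)=j+1$ in $\bs{s}$; symmetrically for $k=j+1$; and for $k\notin\{j,j+1\}$ both position and label are unchanged.

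For the order structure it suffices to match up the generating relation ``$k\preceq k'$ if $k\geq k'$ and $m_{i_k i_{k'}}\neq 2$'' in the two heaps, since transitive closures of isomorphic generating relations are isomorphic. The key observation is that positions $j$ and $j+1$ are incomparable in each heap: their labels commute, and no chain through an intermediate index is available since $j$ and $j+1$ are consecutive integers. Pairs $\{k,k'\}$ disjoint from $\{j,j+1\}$ are handled trivially, since neither indices nor labels move. The remaining case is a pair $\{k,c\}$ with $k\in\{j,j+1\}$ and $c\notin\{j,j+1\}$; here $\phi$ swaps which element of $\{j,j+1\}$ is compared against $c$, but since $j$ and $j+1$ lie on the same side of $c$ in the integer order (as $c\neq j,j+1$), the index inequality in the generating relation is insensitive to the swap, while the labels at positions $j$ and $j+1$ are interchanged in exactly the way $\phi$ prescribes.

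The only obstacle is the bookkeeping in the mixed case, but it collapses to the observation that permuting two consecutive indices whose labels commute can neither create nor destroy a generating heap relation, so $\phi$ is the desired $S$-labelled poset isomorphism.
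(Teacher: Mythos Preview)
Your argument is correct. The reduction to a single commutation move is the right strategy, the transposition $\phi$ is the natural candidate isomorphism, and your case analysis verifying that the generating relations correspond under $\phi$ is sound; in particular, the observation that $j$ and $j{+}1$ are consecutive integers, so no intermediate position can create a comparability between them, is exactly what is needed to rule out the one potentially delicate case.

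There is nothing to compare against in the paper itself: the proposition is quoted from Stembridge's work and stated without proof. Your argument is essentially the standard one (and the one Stembridge gives), so there is no substantive difference in approach to discuss.
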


By Proposition~\ref{prop:heap-depend-on-commutation-class}, if $w$ is fully commutative the heap $H_{\bs{s}}$ does not depend on the chosen reduced word $\bs{s} \in \mc{R}_w$.  Thus we may refer unambiguously to the heap of $w$, which we denote by $H_w$.

Suppose that $w$ is fully commutative and $u \in [\id,w]_L$; clearly $u$ is also fully commutative.  Choose any reduced word $\bs{s'}=s_{i_{\ell'+1}} \cdots s_{i_{\ell}}$ for $u$ and construct the heap $H_w$ with respect to an augmented reduced word
\[
\bs{s}=s_{i_1} \cdots s_{i_{\ell'}} s_{i_{\ell'+1}} \cdots s_{i_{\ell}}=w.
\]
Since $\bs{s'}$ is a suffix of $\bs{s}$, it is immediate from the definitions that $H_w$ contains the $S$-labelled poset $H_u$ as an order ideal.  Thus we have constructed a map
\begin{align*}
\Psi: [\id,w]_L &\to J(H_w) \\
u & \mapsto H_u
\end{align*}
where, for a finite poset $P$, we write $J(P)$ for the distributive lattice of order ideals of $P$. (See \cite[\S 3.4]{ec1} for basic properties of $J(P)$.)  Furthermore, it is clear that $\Psi$ is order preserving.

\begin{prop}[Stembridge \cite{Stembridge}]
\label{prop:weak-order-interval-from-heap}
Let $w \in W$ be fully commutative. Then
\[
\Psi: [\id,w]_L \to J(H_w)
\]
is an isomorphism of posets.
\end{prop}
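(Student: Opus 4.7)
The plan is to construct an explicit inverse $\Phi: J(H_w) \to [\id, w]_L$ and to verify that both compositions are the identity and that $\Phi$ is order-preserving; together with the already-established order-preservation of $\Psi$, this yields the desired isomorphism.

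To define $\Phi$, I fix a reduced word $\bs{s} = s_{i_1}\cdots s_{i_\ell}$ for $w$, so that $H_w$ is the associated heap on $[\ell]$. For an order ideal $I \in J(H_w)$, I pick any linear extension of $H_w$ which lists the $\ell - |I|$ elements of $H_w \setminus I$ first and the $|I|$ elements of $I$ last, and read off the corresponding simple reflections to obtain a word $\bs{s}'$. Since any two linear extensions of a finite poset are related by a sequence of adjacent swaps of incomparable elements, and since for distinct $j,k \in [\ell]$ with $j > k$ the heap relation $j \preceq k$ holds precisely when $m_{i_j i_k} \neq 2$, any such swap exchanges labels of commuting generators. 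Therefore $\bs{s}'$ is obtained from $\bs{s}$ by commutation moves, and in particular is a reduced word for $w$; its last $|I|$ letters then form a reduced word for an element $u \leq_L w$, and I set $\Phi(I) := u$. Well-definedness (independence of the chosen linear extension) follows by applying the same commutation argument separately within $H_w \setminus I$ and within $I$.

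To check $\Phi \circ \Psi = \id$, given $u \leq_L w$ I pick $\bs{s}$ ending with a reduced word for $u$, so that $H_u \subseteq H_w$ is the order ideal on the final $\ell(u)$ indices; applying $\Phi$ then returns $u$. For $\Psi \circ \Phi = \id$, the element $\Phi(I) = u$ is by construction represented by a reduced word whose heap, as an $S$-labelled poset, is isomorphic to the induced subposet of $H_w$ on $I$; hence $\Psi(u) = H_u = I$. To see that $\Phi$ is order-preserving, for $I \subseteq I' \in J(H_w)$ I choose a linear extension of $H_w$ listing $H_w \setminus I'$, then $I' \setminus I$, then $I$ in order. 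The resulting reduced word for $\Phi(I)$ is then a suffix of the reduced word for $\Phi(I')$, which is precisely the defining condition for $\Phi(I) \leq_L \Phi(I')$.

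The crux of the argument is the observation, essentially built into the definition of the heap, that all adjacent swaps between incomparable vertices correspond to commutation moves rather than braid moves; this is what ensures that rearranging $\bs{s}$ according to any linear extension of $H_w$ continues to be a reduced word for the same element $w$. Once that is in place, the bijection reduces to a clean correspondence between order ideals of $H_w$ and suffix-equivalence classes of commutation-equivalent reduced words for $w$. Full commutativity of $w$ itself is used only to ensure that $H_w$ is a well-defined invariant of $w$ (via the earlier proposition), not in the bijection argument proper.
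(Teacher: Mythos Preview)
The paper does not supply a proof of this proposition; it is simply quoted from Stembridge, so there is no in-paper argument to compare against. Your approach---building an explicit inverse by rearranging a reduced word via commutation moves so that the letters indexed by a given order ideal form a suffix---is the standard one and is essentially correct.

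There is one terminological slip worth fixing. With the paper's convention the heap relation is generated by $j \preceq k$ when $j \geq k$ as integers and $m_{i_j i_k}\neq 2$, so the rightmost letters of the word sit at the \emph{bottom} of $H_w$, and an order ideal $I$ therefore consists of the heap-minimal elements. A genuine linear extension of $H_w$ would list $I$ \emph{first}, not last. What you actually want (and what you are implicitly using) is a linear extension of the dual poset $H_w^*$, equivalently a reading of the heap from top to bottom; this is exactly what produces a reduced word for $w$ with the letters of $I$ as a suffix. Once you make that adjustment, your verification that adjacent swaps of incomparable heap elements correspond to commutation moves, and hence that every such reading is a reduced word for $w$, goes through, and the checks of $\Phi\circ\Psi=\id$, $\Psi\circ\Phi=\id$, and order-preservation are all fine.
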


We may identify the poset $[\id,w]_L$ with the set of inversion sets $T_R(u)$ for $u \in [\id,w]_L$ ordered by containment. The isomorphism $\Psi$ between distributive lattices induces a bijection
\begin{align}
\label{eq:Psi-and-psi}
\psi: T_R(w) &\to H_w  \\
s_{i_{\ell}}s_{i_{\ell-1}}\cdots s_{i_{a+1}} s_{i_a} s_{i_{a+1}} &\cdots s_{i_{\ell-1}} s_{i_{\ell}} \mapsto a \nonumber
\end{align}
such that for any $u \in [\id,w]_L$ we have
\[
\Psi(u)=\{\psi(t) \: | \: t \in T_R(u)\} \in J(H_w).
\]

This observation immediately implies the following proposition, which will be crucial to our proof of Conjecture~\ref{conj:1-3-2-3-weyl} for fully commutative elements in acyclic Coxeter groups in Section \ref{sec:fc-theorem}.

\begin{prop}
\label{prop:inversion-fraction-equals-ideal-fraction}
Let $W$ be any Coxeter group. Let $w\in W$ be fully commutative and $C=[\id,w]_L$; let $\psi: T_R(w) \to H_w$ be as in (\ref{eq:Psi-and-psi}).  Then for any $t \in T_R(w)$ and $x=\psi(t) \in H_w$ we have
\[
\delta_C(t)=\frac{|\{I \in J(H_w) \: | \: x \in I\}|}{|J(H_w)|}.
\]

\end{prop}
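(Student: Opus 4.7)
The plan is to show that the bijection $\Psi$ of Proposition~\ref{prop:weak-order-interval-from-heap} restricts to a bijection between $C_t$ and $\{I \in J(H_w) \: | \: x \in I\}$, from which the claimed equality of ratios is immediate. Since the denominator $|J(H_w)| = |C|$ already follows from $\Psi$ being a bijection of sets, essentially all the content is in identifying which order ideals correspond to elements of $C_t$.

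First I would unpack $C_t$ using its definition. An element $u \in C = [\id, w]_L$ lies in $C_t$ precisely when $\ell(ut) < \ell(u)$, which by definition means $t \in T_R(u)$. Next I would use formula (\ref{eq:Psi-and-psi}), which says $\Psi(u) = \{\psi(t') \: | \: t' \in T_R(u)\}$, together with the fact that $\psi$ is a bijection from $T_R(w)$ to $H_w$. Since $T_R(u) \subseteq T_R(w)$ for $u \leq_L w$, the condition $t \in T_R(u)$ is equivalent to $\psi(t) \in \Psi(u)$, that is, $x \in \Psi(u)$.

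Combining these two equivalences, we get
\[
u \in C_t \iff t \in T_R(u) \iff x \in \Psi(u).
\]
Therefore $\Psi$ restricts to a bijection $C_t \xrightarrow{\sim} \{I \in J(H_w) \: | \: x \in I\}$, and
\[
\delta_C(t) = \frac{|C_t|}{|C|} = \frac{|\{I \in J(H_w) \: | \: x \in I\}|}{|J(H_w)|},
\]
as desired.

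There is no real obstacle here: once one has the poset isomorphism $\Psi$ of Proposition~\ref{prop:weak-order-interval-from-heap} and the explicit formula (\ref{eq:Psi-and-psi}) translating right inversion sets into the corresponding order ideals, the statement is a matter of rewriting $\delta_C(t)$ via $\Psi$. The only point to check carefully is that, although $\psi$ is defined only on $T_R(w)$, the equivalence $t \in T_R(u) \iff \psi(t) \in \Psi(u)$ for all $u \leq_L w$ is legitimate, which is ensured by $T_R(u) \subseteq T_R(w)$ under left weak order.
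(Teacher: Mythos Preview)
Your proof is correct and is exactly the argument the paper has in mind: the paper does not write out a separate proof but simply says the proposition is an immediate consequence of the isomorphism $\Psi$ and equation~(\ref{eq:Psi-and-psi}), and you have correctly spelled out that immediate deduction.
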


In light of Proposition~\ref{prop:inversion-fraction-equals-ideal-fraction} we will use the notation
\begin{align*}
    \di(x; Q) &= \frac{|\{I \in J(Q) \: | \: x \in I\}|}{|J(Q)|}, \\
    \bi(Q) &= \max_x \min(\di(x; Q), 1-\di(x; Q)),
\end{align*}
and sometimes write just $\di(x)$ when $Q$ is clear from context.  Thus we have reduced the proof of Theorem~\ref{sec:fc-theorem} to showing that 
\begin{equation}
\label{eq:b-ideal}
    \bi(H_w) \geq \frac{1}{3}
\end{equation}
whenever $w$ is a non-identity fully commutative element in an acyclic Coxeter group $W$.

\subsection{Balance constants for fully commutative intervals}
\label{sec:fc-theorem}
For a finite poset, such as a heap poset, whose elements are labelled by elements of $S$, we write $\sigma(x)$ for the label of the element $x$.

The following lemma is implicit in \cite{Stembridge} and is the reason that heap posets as we have defined them fit into Viennot's more general theory \cite{Viennot}.

\begin{lemma}
\label{lem:heaps-and-graph}
Let $(W,S)$ be any Coxeter system with Coxeter diagram $\Gamma$ and $w \in W$ any element.  Let $\bs{s} \in \mc{R}_w$ be a reduced word and $H=H_{\bs{s}}$ the heap poset with elements labelled by $S$.  Then
\begin{itemize}
    \item[(a)] If $x \precdot x'$ is a cover relation in $H$, then $\sigma(x)$ and $\sigma(x')$ are adjacent in $\Gamma$.
    \item[(b)] If $s, s'$ are adjacent in $\Gamma$ or $s=s'$, and $x,x' \in H$ have labels $s,s'$ respectively, then $x,x'$ are comparable in $H$.
\end{itemize}
\end{lemma}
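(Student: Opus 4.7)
The plan is to deduce both parts directly from the definition of the heap as the transitive closure of a base relation, using one standard fact about reduced expressions.

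Part (b) is essentially immediate: the vertices $s,s'$ are adjacent in $\Gamma$ precisely when $m_{ss'}\geq 3$, and $m_{ss}=1$, so in either hypothesized case $m_{ss'}\neq 2$. By definition, any two positions of $\bs{s}$ carrying the labels $s$ and $s'$ are then already directly related in the base relation defining $H$, and hence comparable in $H$.

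For part (a), I suppose $x\precdot x'$ is a cover in $H$ corresponding to positions $j,k$ of $\bs{s}$, and rule out each of the two alternatives to the conclusion. If $\sigma(x)$ and $\sigma(x')$ are distinct and non-adjacent, then $m_{\sigma(x)\sigma(x')}=2$, so $j$ and $k$ are not directly related by the base relation; any chain in the transitive closure witnessing their comparability therefore has length at least two, and its intermediate position lies strictly between $x$ and $x'$ in $H$ (strictness following because chains are monotone in sequence index and their consecutive entries are distinct). If instead $\sigma(x)=\sigma(x')=s$, then reducedness of $\bs{s}$ prevents the two occurrences of $s$ from being separated only by letters commuting with $s$ — otherwise a sequence of commutation moves would bring them adjacent, producing a subword $ss$ and contradicting reducedness. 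Consequently some position strictly between $j$ and $k$ in $\bs{s}$ carries a label not commuting with $s$, and part (b), applied to this position together with each of $x,x'$, places it strictly between them in $H$, again contradicting the cover.

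The expected obstacle is mild and concentrated in the equal-label case of part (a): it is the only step that requires input beyond unwinding the definition, namely the standard fact that commutation moves alone cannot bring two identical simple reflections adjacent in a reduced expression. Everything else is bookkeeping, together with the observation that an intermediate position in a transitive-closure chain descends to a strictly intermediate heap element because such chains are monotone in sequence index.
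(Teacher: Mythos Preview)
Your proof is correct. The paper does not actually supply a proof of this lemma: it simply states that the result is ``implicit in \cite{Stembridge}'' and moves on, so there is nothing in the paper to compare your argument against. Your direct verification from the definition of the heap as a transitive closure is precisely the sort of argument one would write to make the implicit claim explicit.

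One small point worth making precise in the equal-label case of part~(a): when you say the intermediate letters cannot all ``commute with $s$,'' the relevant meaning (and the one that makes your commutation-move argument valid) is that some intermediate label $t$ has $m_{st}\neq 2$. This allows $t=s$ as well as $t$ adjacent to $s$ in $\Gamma$, and in either case part~(b) applies to produce the required strictly intermediate element. Your invocation of part~(b) is therefore justified without further case analysis.
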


The exact bound $b \geq \frac{1}{3}$ becomes relevant in the next lemma.

\begin{lemma}
\label{lem:common-up-degree}
Let $Q$ be a finite poset with $\bi(Q) < \frac{1}{3}$.  If $x$ is maximal among those elements of $Q$ such that $\di(x)>\frac{2}{3}$, then $x$ is covered by at least two elements of $Q$.  Dually, if $y$ is minimal among those elements of $Q$ such that $\di(y)<\frac{1}{3}$, then $y$ covers at least two elements of $Q$. 
\end{lemma}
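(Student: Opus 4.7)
The plan is a proof by contradiction, exploiting the toggling bijection between order ideals containing a given element as a maximal member and ideals omitting that element. Suppose $x$ is maximal among elements with $\di(x) > 2/3$ but has at most one upper cover in $Q$.

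I will first dispose of the degenerate case where $x$ is maximal in $Q$, i.e.\ has no upper cover at all. There, for every ideal $I \in J(Q)$ containing $x$, the set $I \setminus \{x\}$ is again an ideal; this injects $\{I : x \in I\}$ into its complement in $J(Q)$, forcing $\di(x) \le 1/2$, which contradicts $\di(x) > 2/3$.

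Next, suppose $x$ has a unique upper cover $y$. I will partition the ideals containing $x$ according to whether they also contain $y$. For an ideal $I$ containing $x$ but not $y$, the element $x$ is a maximal element of $I$ (since its sole upper cover is absent), so $I \setminus \{x\}$ is again an ideal; this produces an injection $\{I : x \in I,\ y \notin I\} \hookrightarrow \{I : x \notin I\}$. Counting and dividing by $|J(Q)|$ yields
\[
\di(x) - \di(y) \le 1 - \di(x),
\]
equivalently $\di(y) \ge 2\di(x) - 1 > 1/3$. On the other hand, since $y > x$, the maximality hypothesis on $x$ gives $\di(y) \le 2/3$. These two bounds together produce $\min(\di(y), 1 - \di(y)) \ge 1/3$, contradicting $\bi(Q) < 1/3$.

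The dual statement follows formally by passing to the opposite poset $Q^{\mathrm{op}}$: complementation $I \mapsto Q \setminus I$ identifies $J(Q)$ with $J(Q^{\mathrm{op}})$ and sends $\di(\cdot; Q)$ to $1 - \di(\cdot; Q^{\mathrm{op}})$, converting the minimality hypothesis $\di(y;Q) < 1/3$ into the maximality hypothesis $\di(y; Q^{\mathrm{op}}) > 2/3$, and upper covers in $Q^{\mathrm{op}}$ into lower covers in $Q$. The substantive content of the lemma is the toggling inequality $\di(y) \ge 2\di(x) - 1$; the main subtlety (rather than a true obstacle) is that this inequality is useful only in combination with the upper bound $\di(y) \le 2/3$ coming from maximality of $x$, and the threshold $2/3$ is precisely what makes the two bounds collide to give the contradiction.
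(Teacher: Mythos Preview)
Your proof is correct and follows essentially the same toggling argument as the paper. The only cosmetic difference is that the paper proves the $y$-statement directly (using the injection $I \mapsto I \cup \{y\}$ from $\{I : z \in I,\, y \notin I\}$ into $\{I : y \in I\}$ when $z$ is the unique element covered by $y$) and then dualizes, whereas you prove the $x$-statement directly (using $I \mapsto I \setminus \{x\}$) and dualize; the two injections are formally dual and the arithmetic is the same.
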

\begin{proof}
Suppose, for the sake of contradiction, that $y$ covers at most one element of $Q$.  If $y$ covers a single element $z$, then we have an injection:
\begin{align*}
    \mathcal{I}_1 \coloneqq \{I \in J(Q) \: | \: z \in I, y \not \in I\} &\hookrightarrow \{I \in J(Q) \: | \: y \in I\} \eqqcolon \mathcal{I}_2 \\
    I &\mapsto I \cup \{y\}.
\end{align*}
Note that $\di(z)> \frac{2}{3}$ by the minimality of $y$ and our assumption on $\bi(Q)$. Thus
\[
|\mathcal{I}_1| = \left(\di(z) - \di(y)\right) |J(Q)| > \frac{1}{3} |J(Q)|.
\]
On the other hand, we have
\[
|\mathcal{I}_2| = \di(y) |J(Q)| < \frac{1}{3}|J(Q)|,
\]
a contradiction.

If $y$ does not cover any elements, a similar contradiction is obtained by removing the condition that $z \in I$ in $\mathcal{I}_1$.  The claim for $x$ follows from that for $y$ by replacing $Q$ with the dual poset $Q^*$; this operation preserves $\bi$ while swapping the roles of $x$ and $y$.
\end{proof}

We are now ready to complete the proof of Theorem~\ref{thm:fully-comm}.

\begin{proof}[Proof of Theorem~\ref{sec:fc-theorem}]
Suppose $W$ is an irreducible acyclic Coxeter group and $w \in W$ is a non-identity fully commutative element such that (\ref{eq:b-ideal}) fails:
\[
\bi(H_w) < \frac{1}{3}.
\]
Let $\Gamma$ denote the Coxeter diagram of $W$, a connected acyclic graph, that is, a tree.  Call an element $z \in H_w$ \emph{common} if $\di(z) > \frac{2}{3}$ and \emph{uncommon} if $\di(z) < \frac{1}{3}$; by our assumption on $\bi(H_w)$, all elements are either common or uncommon.  Let $X$ denote the set of maximal common elements and $Y$ the set of minimal uncommon elements.

Define a directed graph $G$ on $X \sqcup Y$ as follows: for each $x \in X$, let $z,z'$ be two uncommon elements of $H_w$ covering $x$ guaranteed to exist by Lemma~\ref{lem:common-up-degree}.  Choose elements $y, y' \in Y$ such that $y \preceq z$ and $y' \preceq z'$; these exist since any uncommon element is above some element of $Y$ by definition.  Add the directed edges $x \to y$ and $x \to y'$ to $G$.  This determines all directed edges from $X$ to $Y$.  Dually, for each $y \in Y$, let $z,z'$ be two common elements of $H_w$ covered by $y$ guaranteed to exist by Lemma~\ref{lem:common-up-degree}.  Choose elements $x, x' \in X$ such that $z \preceq x$ and $z' \preceq x'$.  Add the directed edges $y \to x$ and $y \to x'$ to $G$.  Thus every vertex in $G$ has out degree two.

We now observe a key property of the graph $G$ and the $S$-labels of its elements. For $x \in X$, let $x \to y$ and $x \to y'$ be edges in $G$ and $z,z'$ the intermediate elements covering $x$ discussed above. We claim that $\sigma(y)$ and $\sigma(y')$ lie in different connected components of $\Gamma \setminus \{\sigma(x)\}$.  To see this, observe first that $\sigma(z) \neq \sigma(z')$ by Lemma~\ref{lem:heaps-and-graph} (b), since $z,z'$ are incomparable in $H_w$.  As $\sigma(z), \sigma(z')$ both neighbor $\sigma(x)$ in $\Gamma$ by Lemma~\ref{lem:heaps-and-graph} (a) and as $\Gamma$ is a tree, we see that $\sigma(z)$ and $\sigma(z')$ lie in different connected components of $\Gamma \setminus \{\sigma(x)\}$.  The property then follows by noting that $y$ and $z$ lie in the same connected component of $\Gamma \setminus \{\sigma(x)\}$, as do $y'$ and $z'$.  This is true because Lemma~\ref{lem:heaps-and-graph} (a) implies that a sequence of cover relations $y \precdot \cdots \precdot z$ in $H_w$ gives a path from $\sigma(y)$ and $\sigma(z)$ in $\Gamma$; this path may not pass through $\sigma(x)$ or part (b) of the lemma would contradict the fact that $z$ covers $x$.  Of course $G$ also has the dual property: for $y \in Y$, the labels of the elements $x,x'$ with $y \to x$ and $y \to x'$ in $G$ lie in different connected components of $\Gamma \setminus \{\sigma(y)\}$.

Now, choose an element $s$ in the tree $\Gamma$ to designate as a root, and define the \emph{depth} of $s' \in \Gamma$ as its distance from the root.  Let $g$ be any element of $G$ and let $g \to h, g \to h'$ be its two outgoing edges.  By the argument above, $\sigma(h)$ and $\sigma(h')$ lie in different connected components of $\Gamma \setminus \{\sigma(g)\}$.  All nodes of depth less than $\sigma(g)$ belong to the component of $\Gamma \setminus \{\sigma(g)\}$ that contains $s$.  Thus either $\sigma(h)$ or $\sigma(h')$ must have depth strictly greater than that of $\sigma(g)$, say $\sigma(h)$.  But we may repeat this process, beginning now with $h$ rather than $g$, to obtain an infinite sequence of elements of $\Gamma$ of increasing depth; since $\Gamma$ is a finite tree, this is a contradiction.
\end{proof}

\subsection{Equality cases}
\label{sec:equality-cases}
Let $P_3$ denote the (unique up to isomorphism) poset on three elements with a single cover relation.  It is clear that $P_3$ achieves equality in Conjecture~\ref{conj:1-3-2-3} (see Figure~\ref{fig:type-A-equality}).  In fact, it was shown by Aigner \cite{Aigner} that the only equality cases in Conjecture~\ref{conj:1-3-2-3} among width-two posets occur when $P$ is an ordinal sum of some number of copies of $P_3$ and some number of singleton posets, and it is generally believed (see Brightwell \cite{Brightwell}) that these are the only equality cases among all finite posets. Figures~\ref{fig:type-D-equality},\ref{fig:type-B-equality}, and \ref{fig:type-E-equality} below show that there is a much richer collection of equality cases in Theorem~\ref{thm:fully-comm}.

\begin{figure}[ht!]
\centering
\begin{tikzpicture}
\node[draw,shape=circle,fill=black,scale=0.5](a)[label=below: {$s_1$}] at (-0.3,0) {};
\node[draw,shape=circle,fill=black,scale=0.5](b)[label=below: {$s_2$}] at (1,0) {};

\draw (b)--(a) node [midway, fill=white] {$3$};
\end{tikzpicture}
\hspace{0.5in}
\begin{tikzpicture}
\node[draw,shape=circle,fill=black,scale=0.5](a)[label=right: {$s_2$}] at (0,0) {};
\node[draw,shape=circle,fill=black,scale=0.5](d)[label=right: {$s_1$}] at (0,1) {};

\draw (a)--(d);
\end{tikzpicture}
\caption{The Coxeter diagram (left) for the Weyl group of type $A_2$ (the symmetric group $S_3$). The fully commutative element $w=s_1s_2=231$ has heap poset $H_w$ shown on the right with $S$-labels; this example is an equality case $b([\id,w]_L)=\bi(H_w)=\frac{1}{3}$ corresponding to the poset $P_3$ in the original formulation of the $1/3$-$2/3$ Conjecture.}
\label{fig:type-A-equality}
\end{figure}
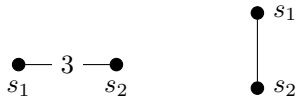

\begin{figure}[ht!]
\centering
\begin{tikzpicture}
\node[draw,shape=circle,fill=black,scale=0.5](a)[label=below: {$s_1$}] at (-0.3,0) {};
\node[draw,shape=circle,fill=black,scale=0.5](b)[label=below: {$s_2$}] at (1,0) {};
\node[draw,shape=circle,fill=black,scale=0.5](c)[label=above: {$s_3$}] at (2,0.5) {};
\node[draw,shape=circle,fill=black,scale=0.5](d)[label=below: {$s_4$}] at (2,-0.5) {};

\draw (b)--(a) node [midway, fill=white] {$3$};
\draw (c)--(b) node [midway, fill=white] {$3$};
\draw (b)--(d) node [midway, fill=white] {$3$};
\end{tikzpicture}
\hspace{0.5in}
\begin{tikzpicture}
\node[draw,shape=circle,fill=black,scale=0.5](a)[label=right: {$s_2$}] at (0,0) {};
\node[draw,shape=circle,fill=black,scale=0.5](b)[label=below: {$s_1$}] at (0.5,-1) {};
\node[draw,shape=circle,fill=black,scale=0.5](c)[label=below: {$s_3$}] at (-0.5,-1) {};
\node[draw,shape=circle,fill=black,scale=0.5](d)[label=right: {$s_4$}] at (0,1) {};

\draw (b)--(a);
\draw (c)--(a);
\draw (a)--(d);
\end{tikzpicture}
\caption{The Coxeter diagram (left) for the Weyl group of type $D_4$. The fully commutative element $w=s_4s_2s_3s_1$ has heap poset $H_w$ shown on the right with $S$-labels; this example is an equality case $b([\id,w]_L)=\bi(H_w)=\frac{1}{3}$.}
\label{fig:type-D-equality}
\end{figure}

\begin{figure}[ht!]
\centering
\begin{tikzpicture}
\node[draw,shape=circle,fill=black,scale=0.5](a)[label=below: {$s_1$}] at (0,0) {};
\node[draw,shape=circle,fill=black,scale=0.5](b)[label=below: {$s_2$}] at (1,0) {};
\node[draw,shape=circle,fill=black,scale=0.5](c)[label=below: {$s_3$}] at (2,0) {};

\draw (b)--(a) node [midway, fill=white] {$3$};
\draw (c)--(b) node [midway, fill=white] {$4$};
\end{tikzpicture}
\hspace{0.5in}
\begin{tikzpicture}
\node[draw,shape=circle,fill=black,scale=0.5](a)[label=right: {$s_2$}] at (0,0) {};
\node[draw,shape=circle,fill=black,scale=0.5](b)[label=below: {$s_1$}] at (0.5,-1) {};
\node[draw,shape=circle,fill=black,scale=0.5](c)[label=below: {$s_3$}] at (-0.5,-1) {};
\node[draw,shape=circle,fill=black,scale=0.5](d)[label=right: {$s_3$}] at (0,1) {};

\draw (b)--(a);
\draw (c)--(a);
\draw (a)--(d);
\end{tikzpicture}
\caption{The Coxeter diagram (left) for the Weyl group of type $B_3$. The fully commutative element $w=s_3s_2s_3s_1$ has heap poset $H_w$ shown on the right with $S$-labels; this example is an equality case $b([\id,w]_L)=\bi(H_w)=\frac{1}{3}$.}
\label{fig:type-B-equality}
\end{figure}

\begin{figure}[ht!]
\centering
\begin{tikzpicture}
\node[draw,shape=circle,fill=black,scale=0.5](a)[label=below: {$s_1$}] at (0,0) {};
\node[draw,shape=circle,fill=black,scale=0.5](b)[label=below: {$s_2$}] at (1,0) {};
\node[draw,shape=circle,fill=black,scale=0.5](c)[label=below: {$s_3$}] at (2,0) {};
\node[draw,shape=circle,fill=black,scale=0.5](d)[label=below: {$s_4$}] at (3,0) {};
\node[draw,shape=circle,fill=black,scale=0.5](e)[label=below: {$s_5$}] at (4,0) {};

\node[draw,shape=circle,fill=black,scale=0.5](f)[label=above: {$s_6$}] at (2,1) {};

\draw (b)--(a) node [midway, fill=white] {$3$};
\draw (b)--(c) node [midway, fill=white] {$3$};
\draw (c)--(d) node [midway, fill=white] {$3$};
\draw (d)--(e) node [midway, fill=white] {$3$};
\draw (c)--(f) node [midway, fill=white] {$3$};
\end{tikzpicture}
\hspace{0.5in}
\begin{tikzpicture}
\node[draw,shape=circle,fill=black,scale=0.5](a)[label=below: {$s_1$}] at (-1,0) {};
\node[draw,shape=circle,fill=black,scale=0.5](b)[label=below: {$s_3$}] at (0,0) {};
\node[draw,shape=circle,fill=black,scale=0.5](c)[label=below: {$s_5$}] at (1,0) {};

\node[draw,shape=circle,fill=black,scale=0.5](d)[label=left: {$s_2$}] at (-0.5,1) {};
\node[draw,shape=circle,fill=black,scale=0.5](e)[label=right: {$s_4$}] at (0.5,1) {};

\node[draw,shape=circle,fill=black,scale=0.5](f)[label=right: {$s_3$}] at (0,2) {};

\node[draw,shape=circle,fill=black,scale=0.5](g)[label=right: {$s_6$}] at (0,3) {};

\draw (a)--(d)--(b)--(e)--(c);
\draw (d)--(f)--(e);
\draw (f)--(g);
\end{tikzpicture}
\caption{The Coxeter diagram (left) for the Weyl group of type $E_6$. The fully commutative element $w=s_6s_3s_2s_4s_1s_3s_5$ has heap poset $H_w$ shown on the right with $S$-labels; this example is an equality case $b([\id,w]_L)=\bi(H_w)=\frac{1}{3}$.}
\label{fig:type-E-equality}
\end{figure}

One striking feature of Conjecture~\ref{conj:1-3-2-3-weyl} is that the conjectured bound $b(C) \geq \frac{1}{3}$ is type-independent.  Since large finite Weyl groups contain large type $A$ parabolic subgroups, one possible explanation for this would be if there were some larger type-dependent lower bound on $b$ for convex sets which are, say, ``genuinely type $B$", but the type $A$ parabolic subgroup would ensure that the overall bound could be no larger than $\frac{1}{3}$.  This is not the case.  Instead, Figures \ref{fig:type-D-equality}, \ref{fig:type-B-equality}, and \ref{fig:type-E-equality} show that all finite Weyl groups have fully commutative elements $w$ such that $b([\id,w]_L)=\frac{1}{3}$ which do not come from known type $A$ equality cases.  We think these examples make Conjecture~\ref{conj:1-3-2-3-weyl} all the more interesting: there are several genuinely distinct ways to match and yet not surpass the conjectured bound.

Comparing these examples with Aigner's results prompts the following question.

\begin{quest}
Let $w$ be a fully commutative element in a finite Weyl group with 
\[
b([\id,w]_L)=\bi(H_w)=\frac{1}{3}.
\]
Is $H_w$ isomorphic to a disjoint union of the heap posets appearing in Figures \ref{fig:type-A-equality}, \ref{fig:type-D-equality}, \ref{fig:type-B-equality}, and \ref{fig:type-E-equality}?
\end{quest}

\begin{ex}
\label{ex:infinite-equality-cases}
If we consider not just Weyl groups as in Figures~\ref{fig:type-A-equality}, \ref{fig:type-D-equality}, \ref{fig:type-B-equality}, and \ref{fig:type-E-equality}, but also infinite acyclic Coxeter groups (to which Theorem~\ref{thm:fully-comm} still applies), there are infinitely many non-isomorphic connected heap posets giving equality cases.  For any $k$ and $\ell$, let $H$ be the $S$-labelled poset shown below, and let $W$ be the acyclic Coxeter group whose Coxeter diagram is isomorphic to the Hasse diagram of $H$, with all edge labels equal to 3. Let $w=s_{k+{\ell}}s_{k+{\ell}-1} \cdots s_2 s_1$ be the fully commutative element with heap $H$. 

Let $y \in H$ be the element labelled by $s_{k+1}$ and let $x$ be any of the elements labelled $s_1,\ldots,s_k$. We have $|J(H)|=\ell+2^k$, $\di(x)=(\ell+2^{k-1})/(\ell+2^k)$, and $\di(y)=\ell/(\ell+2^k)$. Thus, setting $\ell=2^{k-1}$ yields $\di(x)=\frac{2}{3}$ and $\di(y)=\frac{1}{3}$, so $\bi(H)=\frac{1}{3}$.

\begin{center}
\begin{tikzpicture}
\node[draw,shape=circle,fill=black,scale=0.5](a1)[label=below: {$s_1$}] at (-1,-1) {};
\node[draw,shape=circle,fill=black,scale=0.5](a2)[label=below: {$s_2$}] at (-.7,-1) {};
\node [label=below: {$\cdots$}] at (0,-.75) {};
\node[draw,shape=circle,fill=black,scale=0.5](a3)[label=below: {$s_k$}] at (1,-1) {};

\node[draw,shape=circle,fill=black,scale=0.5](b1)[label=right: {$s_{k+1}$}] at (0,0) {};
\node[draw,shape=circle,fill=black,scale=0.5](b2)[label=right: {$s_{k+2}$}] at (0,.5) {};
\node(b3)[label={$\vdots$}] at (0,1) {};
\node[draw,shape=circle,fill=black,scale=0.5](b4)[label=right: {$s_{k+\ell-1}$}] at (0,2) {};
\node[draw,shape=circle,fill=black,scale=0.5](b5)[label=right: {$s_{k+\ell}$}] at (0,2.5) {};

\draw (a1)--(b1);
\draw (a2)--(b1);
\draw (a3)--(b1);
\draw (b1)--(b2)--(b3);
\draw (b4)--(b5);
\end{tikzpicture}
\end{center}
Although these examples give the smallest possible balance constants for fully commutative intervals in their respective acyclic Coxeter groups, balance constants of other convex sets may be lower, unlike the conjectured situation for Weyl groups; see the examples in Section \ref{sec:examples}.
\end{ex}

\section{Generalized semiorders}\label{sec:semiorders}

Let us recall the notion of a \emph{semiorder} (also known in the literature as a \emph{unit interval order}).

\begin{defin}\label{def:semiorder}
A finite poset $P$ is a \emph{semiorder} if there exists a function $f:P\rightarrow\R$ such that $x<y$ in $P$ if and only if $f(y)-f(x)\geq1$. 
\end{defin}

It is a standard fact that $P$ is a semiorder if and only if $P$ avoids induced copies of the posets $\mathbf{2}+\mathbf{2}$ and $\mathbf{3}+\mathbf{1}$. This characterization says that, informally, semiorders are ``tall and thin" so they are good candidates to serve as counterexamples to the $1/3$-$2/3$ Conjecture (see the discussion in \cite{Brightwell}). However semiorders have been ruled out as counterexamples.

\begin{theorem}[Brightwell~\cite{Brightwell-semiorder}]
\label{thm:brightwell-semiorder}
For any semiorder $P$ that is not a total order, $b(P)\geq\frac{1}{3}$. 
\end{theorem}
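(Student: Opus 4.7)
The plan is to identify a specific incomparable pair in the semiorder representation and prove that it is $1/3$-balanced. Using the real-valued representation $f$ from Definition~\ref{def:semiorder}, I order the elements as $x_1, \ldots, x_n$ with $f(x_1) < \cdots < f(x_n)$ (perturbing if necessary). Because $P$ is not a total order, some $f$-consecutive pair $(x_i, x_{i+1})$ must be incomparable in $P$; a natural candidate is the pair minimizing $f(x_{i+1}) - f(x_i)$, i.e.\ the closest incomparable pair in the representation.

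The first key step is structural: I exploit the forbidden subposets $\mathbf{2}+\mathbf{2}$ and $\mathbf{3}+\mathbf{1}$ to describe how the remaining $n-2$ elements interact with $x_i$ and $x_{i+1}$. Writing $D(w) = \{z : z < w\}$ and $U(w) = \{z : z > w\}$, the $(\mathbf{3}+\mathbf{1})$-free condition forces the symmetric differences $D(x_{i+1}) \triangle D(x_i)$ and $U(x_i) \triangle U(x_{i+1})$ to be tightly constrained, while the $(\mathbf{2}+\mathbf{2})$-free condition produces a nested chain structure among the down-sets $D(x_j)$. Together these narrow the local structure around the chosen pair to a small number of cases and, in particular, pin down which elements can appear between $x_i$ and $x_{i+1}$ in a linear extension.

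The second step is a swap-involution argument on linear extensions: given $\lambda$ with $\lambda(x_i) < \lambda(x_{i+1})$, produce $\sigma(\lambda)$ by interchanging the ranks of $x_i$ and $x_{i+1}$ and relocating any intermediate elements forced to move in order to preserve $P$-order. When $\sigma(\lambda)$ is a valid linear extension, $\sigma$ is an involution pairing linear extensions of opposite types. The nested down-set/up-set structure from the first step shows that the obstructions, namely the linear extensions where the swap cannot be performed, are exactly those in which certain configurations of swing elements (those in $D(x_{i+1}) \setminus D(x_i)$ or $U(x_i) \setminus U(x_{i+1})$) lie strictly between $x_i$ and $x_{i+1}$ in $\lambda$. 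Counting these obstructions via the rigid local structure should give that at most a one-third fraction of linear extensions violate the pairing in either direction, yielding $\delta_P(x_i, x_{i+1}) \in [1/3, 2/3]$.

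The main obstacle will be making the obstruction count sharp enough to reach exactly $1/3$, which by Aigner's equality analysis is tight. If the distinguished pair ever fails to be $1/3$-balanced, one fallback is to proceed by induction on $|P|$: a unique minimum or maximum element of $P$ may be deleted without changing the $\delta$-values of any remaining pair, reducing to the case of semiorders with at least two minimal and two maximal elements, where more symmetry is available. Another fallback is to try a different canonical pair, for instance the incomparable pair of elements with $f$-values closest to the median, since this may provide better cancellation in the involution step.
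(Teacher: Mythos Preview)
Your swap-involution idea is exactly Lemma~\ref{lem:semiorder-uncommon} of the paper, and there it is used more sharply than you propose: for a semiorder, the map $w\mapsto ws_\alpha$ (in poset language, transposing the positions of $x_i$ and $x_{i+1}$) is an injection from $C_\alpha$ into $C\setminus C_\alpha$ with \emph{no} obstructions whatsoever, because the semiorder (order-ideal) condition forces $ws_\alpha\in C$ whenever $w\in C_\alpha$. This already gives $\delta_C(\alpha)\le 1/2$ for every root $\alpha$, i.e.\ for every incomparable pair. So the first half of your plan is correct but undersells itself; you do not need to track obstructions at all in this direction.

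The gap is in your second step. You hope that counting obstructions to the reverse swap will show $\delta\ge 1/3$ for the closest pair, but you never carry out the count, and it is not clear the ``closest'' pair is the right one. The paper instead isolates the correct structural property via Lemma~\ref{lem:semiorder-orderideal}: there is a consecutive incomparable pair $(p_i,p_{i+1})$ (a simple root $\alpha$) for which at most one other element is comparable to exactly one of $p_i,p_{i+1}$ (at most one $\beta\in A$ with $s_\alpha\beta\notin A$). With this in hand, the argument for $\delta\ge 1/3$ is short and does not analyze obstructions to a swap at all: assuming $\delta_C(\alpha)<1/3$, the set $K=C\setminus(C_\alpha\cup s_\alpha C_\alpha)$ has size $>|C|/3$, and every $w\in K$ is shown to have the unique swing root $\beta$ as an inversion, so $K\subseteq C_\beta$, contradicting $\delta_C(\beta)<1/3$. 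Your proposed induction and alternate-pair fallbacks are unnecessary once this ``at most one swing element'' pair is identified; conversely, without that lemma your obstruction count has no reason to stop at one-third.
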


In this section, we generalize semiorders to arbitrary finite Weyl groups and show that their balance constants are at least $\frac{1}{3}$. Our notation is the same as in Section~\ref{sec:background-weyl-groups}.
\begin{defin}
Let $\Phi$ be a root system with Weyl group $W$. A convex set $C\subseteq W$ is a \emph{generalized semiorder} if $C=W^A$ for some order ideal $A\subseteq\Phi^+$ of the root poset.
\end{defin}

We first note that generalized semiorders include the classical semiorders (Definition~\ref{def:semiorder}). Indeed, given a poset $P$ on $n$ elements and a function $f:P\rightarrow\R$, let $p_1,\ldots,p_n$ be the elements of $P$, indexed such that 
\[
f(p_1)\leq f(p_2)\leq\cdots\leq f(p_n).
\]
Recall that the type $A_{n-1}$ root system has positive roots 
\[
\Phi^+=\{e_i-e_j\:|\:1\leq i<j\leq n\}.
\]
It is easy to see that $e_{i'}-e_{j'}\leq e_{i}-e_{j}$ in the root poset if and only if $i\leq i'<j'\leq j$. We have $C(P)=W^A$, where the set of allowed inversions is
\[
A=\{e_i - e_j \in \Phi^+ \: | \: f(p_j)-f(p_i)<1\}.
\]
This subset $A\subseteq\Phi^+$ must be an order ideal in $\Phi^+$: if $e_i-e_j\in A$ and $e_{i'}-e_{j'}\leq e_i-e_j$ in the root poset, then $f(p_{j'})\leq f(p_j)$ and $f(p_{i'})\geq f(p_i)$, so $f(p_{j'})-f(p_{i'})\leq f(p_j)-f(p_i)<1$, meaning $e_{i'}-e_{j'}\in A$.

The following generalization of Theorem~\ref{thm:brightwell-semiorder} is our main theorem of this section.

\begin{theorem}\label{thm:semiorder}
Let $C\subseteq W$ be a generalized semiorder with $|C|>1$. Then $b(C)\geq\frac{1}{3}$.
\end{theorem}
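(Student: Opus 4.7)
The plan is to locate, for each generalized semiorder $C = W^A$ with $|C| > 1$, a root $\alpha^{*} \in A$ such that $\delta_C(\alpha^{*}) \in [\tfrac13, \tfrac23]$. First dispose of the trivial case $A = \Phi^+$, in which $C = W$ and right multiplication by any reflection bijects $C_\alpha$ with $C \setminus C_\alpha$, giving $\delta_C(\alpha) = \tfrac12$. Henceforth assume $A \subsetneq \Phi^+$, and pick as pivot a simple root $\alpha_i \in A$; such an $\alpha_i$ exists because $A$ is a nonempty order ideal and the minimal elements of any order ideal in $\Phi^+$ are simple roots.

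The main structural input, generalizing the $\mathbf{2} + \mathbf{2}$ and $\mathbf{3} + \mathbf{1}$ avoidance defining classical semiorders, is a \emph{forbidden pair principle}: if $\beta = \gamma_1 + \gamma_2 \in \Phi^+ \setminus A$ with $\gamma_1, \gamma_2 \in A$, then no $w \in C$ has both $\gamma_1$ and $\gamma_2$ in $I_\Phi(w)$. This follows because inversion sets are biclosed, so $\{\gamma_1,\gamma_2\} \subseteq I_\Phi(w)$ would force $\gamma_1 + \gamma_2 = \beta \in I_\Phi(w) \subseteq A$, contradicting $\beta \notin A$. The order ideal property of $A$ produces a plentiful supply of such forbidden pairs along the boundary of $A$ in $\Phi^+$.

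I would next study the partial involution $\tau_i : w \mapsto ws_i$ on $C$, using the formula $I_\Phi(ws_i) = s_i(I_\Phi(w) \setminus \{\alpha_i\}) \sqcup \{\alpha_i : \alpha_i \notin I_\Phi(w)\}$, which shows that $\tau_i$ toggles $\alpha_i$ in the inversion set. Call $w \in C$ \emph{stuck} if $\tau_i(w) \notin C$; this happens precisely when some $\beta \in I_\Phi(w) \setminus \{\alpha_i\}$ has $\langle \alpha_i, \beta \rangle < 0$ and $s_i(\beta) \notin A$. Iterating biclosedness along the $\alpha_i$-string through $\beta$ (the assumption $\alpha_i, \beta \in I_\Phi(w)$ would force $\alpha_i + \beta$, $2\alpha_i + \beta$, \dots, $s_i(\beta) = k\alpha_i + \beta$ all into $I_\Phi(w) \subseteq A$, contradicting $s_i(\beta) \notin A$) shows that $\alpha_i \notin I_\Phi(w)$ for every stuck $w$. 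Consequently all stuck elements lie in $C \setminus C_{\alpha_i}$, the $\tau_i$-pairs split evenly between $C_{\alpha_i}$ and $C \setminus C_{\alpha_i}$, and one obtains
\[
|C| = 2|C_{\alpha_i}| + N_{\mathrm{stuck}}, \qquad \delta_C(\alpha_i) = \frac{|C_{\alpha_i}|}{2|C_{\alpha_i}| + N_{\mathrm{stuck}}} \leq \tfrac12.
\]

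The matching lower bound $\delta_C(\alpha_i) \geq \tfrac13$ is equivalent to $N_{\mathrm{stuck}} \leq |C_{\alpha_i}|$, which I would prove by constructing an injection from the stuck elements into $C_{\alpha_i}$ --- intuitively ``inserting'' $\alpha_i$ into the inversion set of a stuck $w$ via an appropriate Coxeter-group operation. I expect this to be the main obstacle: a careless pivot can yield the tight $\delta = \tfrac13$ (as in the type $A_2$ example $A = \{\alpha_1, \alpha_2\}$, where both simple roots attain equality), so the pivot must be selected with care --- plausibly as an extremal simple root of the sub-Dynkin diagram on $A \cap \Delta$ --- and the injection must combine the forbidden pair principle with the local covering structure of $A$ near $\alpha_i$. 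In the type $A$ specialization this should recover Brightwell's choice of the first incomparable pair in \cite{Brightwell-semiorder}.
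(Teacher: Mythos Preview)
Your setup through the upper bound $\delta_C(\alpha_i)\le\tfrac12$ is correct and matches the paper's Lemma~\ref{lem:semiorder-uncommon} (there proved for all $\alpha\in\Phi^+$, not just simple roots, by the same $w\mapsto ws_\alpha$ argument). Your identification of the stuck elements --- call the set $K$, so $N_{\mathrm{stuck}}=|K|$ --- and the decomposition $|C|=2|C_{\alpha_i}|+|K|$ is also exactly what the paper uses.

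The gap is in the lower bound, and the paper does \emph{not} construct an injection $K\hookrightarrow C_{\alpha_i}$. The missing ingredient is a purely root-combinatorial lemma (Lemma~\ref{lem:semiorder-orderideal}): for any nonempty order ideal $J\subseteq\Phi^+$ there exists a simple root $\alpha_i\in J$ for which \emph{at most one} $\beta\in J$ satisfies $s_i\beta\in\Phi^+\setminus J$. Your stuck analysis already shows each $w\in K$ carries such a $\beta$ in $I_\Phi(w)$; the lemma forces this $\beta$ to be the same for all of them, hence $K\subseteq C_\beta$. The proof then finishes by contradiction, assuming $\delta_C(\gamma)<\tfrac13$ for \emph{every} $\gamma$: then $|C_{\alpha_i}|<\tfrac{|C|}{3}$ gives $|K|>\tfrac{|C|}{3}$, while $K\subseteq C_\beta$ and $|C_\beta|<\tfrac{|C|}{3}$ give $|K|<\tfrac{|C|}{3}$. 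Crucially the contradiction uses the hypothesis on $\delta_C(\beta)$ as well as on $\delta_C(\alpha_i)$; the direct inequality $|K|\le|C_{\alpha_i}|$ you aim for is not established and is not obviously true.

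Your guess of ``extremal simple root of the sub-Dynkin diagram on $A\cap\Delta$'' is not the paper's selection criterion; what matters is the number of $s_i$-labelled edges crossing the boundary of $A$ in the root poset. The paper proves Lemma~\ref{lem:semiorder-orderideal} type-by-type (lattice-path counting for types $A$, $B$, $C$; a more intricate argument for type $D$; computer check for $E_6$, $E_7$, $E_8$, $F_4$, $G_2$) and explicitly flags a type-independent proof as desirable. Your forbidden-pair principle is correct but only plays the role you already gave it inside the stuck analysis; the decisive structural input is this boundary-edge count, which your proposal does not anticipate.
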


The rest of the section will be devoted to proving Theorem~\ref{thm:semiorder}. We start with the following simple but important lemma.

\begin{lemma}\label{lem:semiorder-uncommon}
Let $C\subseteq W(\Phi)$ be a generalized semiorder. Then $\delta_C(\alpha)\leq\frac{1}{2}$ for all $\alpha\in\Phi^+$.
\end{lemma}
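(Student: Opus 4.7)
If $\alpha \notin A$, then $C_\alpha = \emptyset$ and there is nothing to prove, so assume $\alpha \in A$. My strategy is to construct an injection $\phi : C_\alpha \hookrightarrow C \setminus C_\alpha$, from which the bound $\delta_C(\alpha) \leq 1/2$ follows immediately. The natural candidate is right multiplication by the reflection itself, $\phi(w) = w s_\alpha$; this is clearly a bijection on $W$, and a direct length computation shows that $\alpha \in I_\Phi(w) \iff \alpha \notin I_\Phi(w s_\alpha)$, so $\phi$ automatically flips membership in $C_\alpha$. The whole content of the lemma is then the assertion $\phi(C_\alpha) \subseteq C$, i.e.\ that right multiplication by $s_\alpha$ preserves $C$ on the ``$\alpha$-inversion'' side.

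To verify this I would pass to the geometric picture developed in Section~\ref{sec:background-weyl-groups}. The cone $K = \bigcup_{w \in C} \overline{R_w}$ is precisely the convex polyhedral cone
\[
K = \{x \in E : \langle x, \alpha'\rangle \geq 0 \text{ for all } \alpha' \in \Phi^+ \setminus A\},
\]
as one sees from the description of $R_w$ together with $T_R(w) \subseteq A$. Let $K_- = K \cap \{\langle\cdot,\alpha\rangle \leq 0\}$; then the map $R_w \mapsto R_{w s_\alpha}$ is literally the reflection across $H_\alpha$, so $\phi(C_\alpha) \subseteq C$ is equivalent to the containment $s_\alpha K_- \subseteq K$.

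To establish $s_\alpha K_- \subseteq K$, I would fix $x \in K_-$ and $\alpha' \in \Phi^+ \setminus A$ and show $\langle s_\alpha x, \alpha'\rangle = \langle x, s_\alpha \alpha'\rangle \geq 0$. Writing $s_\alpha \alpha' = \alpha' - k\alpha$ with $k = 2\langle\alpha, \alpha'\rangle / \langle\alpha,\alpha\rangle \in \mathbb{Z}$, the case analysis would run: (i) if $s_\alpha \alpha' \in \Phi^+$ and $k \geq 0$, expand $\langle x, s_\alpha\alpha'\rangle = \langle x, \alpha'\rangle - k\langle x, \alpha\rangle$ and use $\langle x, \alpha'\rangle \geq 0$, $\langle x, \alpha\rangle \leq 0$; (ii) if $s_\alpha \alpha' \in \Phi^+$ and $k < 0$, then $s_\alpha \alpha' \geq \alpha'$ in the root poset, so the order ideal property of $A$ forces $s_\alpha \alpha' \in \Phi^+ \setminus A$ as well, and the constraint on $x$ applies directly; (iii) if $s_\alpha \alpha' \in \Phi^-$, set $\gamma = k\alpha - \alpha' \in \Phi^+$ (note $k>0$ here) and the same expansion yields $\langle x, \gamma\rangle \leq 0$, hence $\langle x, s_\alpha\alpha'\rangle = -\langle x, \gamma\rangle \geq 0$.

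The main obstacle, and the only place the semiorder hypothesis enters, is case (ii): without down-closure of $A$ one could have $s_\alpha \alpha' \in A$ while $\alpha' \notin A$, and the algebraic identity alone then has the wrong sign. Once $s_\alpha K_- \subseteq K$ is established, the fact that $s_\alpha$ is an isometry combined with the equal spherical volume of all Weyl chambers gives $|C_\alpha| \leq |C \setminus C_\alpha|$, which is exactly $\delta_C(\alpha) \leq \frac{1}{2}$.
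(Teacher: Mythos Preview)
Your proof is correct and follows essentially the same approach as the paper: both construct the injection $w \mapsto ws_\alpha$ from $C_\alpha$ into $C \setminus C_\alpha$, and the crucial step in both is using the order-ideal hypothesis to handle the case $\langle\alpha,\alpha'\rangle<0$ (your case~(ii), the paper's contradiction argument). The only difference is packaging: you pass to the dual cone $K$ and verify $s_\alpha K_- \subseteq K$ directly by case analysis, whereas the paper works with roots and inversion sets and argues by contradiction; the final volume argument is unnecessary since you have already established the injection combinatorially.
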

\begin{proof}
Let $C=W^A$ where $A\subseteq\Phi^+$ is an order ideal. If $\alpha\notin A$, then $\delta_C(\alpha)=0$, so suppose $\alpha\in A$. It suffices to construct an injection from $C_{\alpha}\coloneqq\{w\in C\:|\: \alpha\in I_{\Phi}(w)\}$ to $C\setminus C_{\alpha}$; we claim that $w\mapsto ws_{\alpha}$ works. It is clear that if $w$ has $\alpha$ as an inversion, then $ws_{\alpha}$ does not have $\alpha$ as an inversion, so the task is to show that if $w\in C_{\alpha}$, then $ws_{\alpha}\in C$, which we do in the contrapositive.

Suppose that $ws_{\alpha}\notin C$. This means that there exists $\beta\in \Phi^+\setminus A$ such that $\beta\in I_{\Phi}(ws_{\alpha})$. In other words,
$$ws_{\alpha}\beta=w\left(\beta-\frac{2\langle\alpha,\beta\rangle}{\langle\alpha,\alpha\rangle}\alpha\right)=w\beta-\frac{2\langle\alpha,\beta\rangle}{\langle\alpha,\alpha\rangle}w\alpha\in\Phi^-.$$
If $\beta \in I_{\Phi}(w)$, then $w \not \in C$; if $\alpha \not \in I_{\Phi}(w)$ then $w \not \in C_{\alpha}$ and we are done. So assume now that $\beta \not \in I_{\Phi}(w)$ and $\alpha \in I_{\Phi}(w)$, meaning that $w \beta \in \Phi^+$ and $w \alpha \in \Phi^-$. The fact that $ws_{\alpha}\beta \in \Phi^-$ then implies that $\langle\alpha,\beta\rangle<0$ and thus $s_{\alpha}\beta=\beta-\frac{2\langle\alpha,\beta\rangle}{\langle\alpha,\alpha\rangle}\alpha$ is a positive root greater than $\beta$ in the root poset. As $\beta\notin A$ and $A$ is an order ideal, $s_{\alpha}\beta\notin A$. However $w(s_{\alpha}\beta)\in\Phi^-$, so $s_{\alpha}\beta\in I_{\Phi}(w)$ and thus $w \not \in C=W^A$.
\end{proof}

The next lemma is purely about root systems.
\begin{lemma}\label{lem:semiorder-orderideal}
Let $J\subseteq\Phi^+$ be a nonempty order ideal. Then there exists a simple root $\alpha_i\in J$ such that there do not exist $\beta_1\neq\beta_2\in J$ with $s_i\beta_1,s_i\beta_2\in\Phi^+\setminus J$.
\end{lemma}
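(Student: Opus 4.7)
I would attempt a proof by induction on $|J|$. The base case $|J|=1$ is immediate: $J=\{\alpha_i\}$ for some simple root, and no other element exists in $J$ to form a bad pair. For the inductive step with $|J|\geq 2$, pick a maximal element $\mu\in J$ and set $J'=J\setminus\{\mu\}$, which is still a nonempty order ideal. The inductive hypothesis supplies a simple root $\alpha_j\in J'\cap\Delta$ such that at most one $\beta\in J'$ satisfies $s_j\beta\in\Phi^+\setminus J'$; write $B'(j)$ and $B(j)$ for the corresponding ``bad sets'' with respect to $J'$ and $J$.

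The key step is to compare $|B(j)|$ with $|B'(j)|$, which splits according to the sign of $\langle\mu,\alpha_j^\vee\rangle$.
\begin{itemize}
\item If $\langle\mu,\alpha_j^\vee\rangle=0$, then $s_j\mu=\mu$ and one checks $B(j)=B'(j)$, so $|B(j)|\leq 1$.
\item If $\langle\mu,\alpha_j^\vee\rangle>0$, then $s_j\mu<\mu$ in the root poset, so $s_j\mu\in J'$. The element $s_j\mu$ was in $B'(j)$ (its image under $s_j$ is $\mu\notin J'$) but is no longer bad for $J$ (since $\mu\in J$), and no new bad element is introduced, giving $|B(j)|=|B'(j)|-1\leq 0$.
\item If $\langle\mu,\alpha_j^\vee\rangle<0$, then $s_j\mu>\mu$, and the maximality of $\mu$ in $J$ forces $s_j\mu\notin J$. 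So $\mu$ itself becomes bad for $\alpha_j$ relative to $J$, yielding $|B(j)|=|B'(j)|+1$, which is at most $1$ only when $|B'(j)|=0$.
\end{itemize}

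The main obstacle is the third case when $|B'(j)|=1$, in which case $|B(j)|=2$ and $\alpha_j$ fails to witness the lemma. To resolve this, one would want to exploit the freedom to choose both $\mu$ (among the maximal elements of $J$) and the simple root. A useful structural fact is that for any $\mu\in J$ there exists some $\alpha_k$ in the support of $\mu$ (hence in $J\cap\Delta$) with $\langle\mu,\alpha_k^\vee\rangle>0$: this follows from $\langle\mu,\mu\rangle>0$ via the expansion
\[
\langle\mu,\mu\rangle=\sum_k c_k\,\frac{\langle\alpha_k,\alpha_k\rangle}{2}\,\langle\mu,\alpha_k^\vee\rangle
\]
where $\mu=\sum_k c_k\alpha_k$. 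For such $\alpha_k$, the pair $(\mu,\alpha_k)$ falls into the second case above, sidestepping the obstruction. The crucial remaining task, which I expect to be the main technical hurdle, is to organize the induction so that a compatible $\alpha_k$ is actually produced by the inductive hypothesis. This may require strengthening the induction to track the full ``good set'' $G(J')=\{\alpha\in J'\cap\Delta:|B'(\alpha)|\leq 1\}$ and showing that $G(J')$ can be arranged to intersect the support of an appropriately chosen maximal element of $J$.
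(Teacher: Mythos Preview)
Your case analysis (the three bullets) is correct, and you have accurately located the obstruction: when $\langle\mu,\alpha_j^\vee\rangle<0$ and $|B'(j)|=1$, the simple root $\alpha_j$ handed to you by the inductive hypothesis fails for $J$. But what you have written is a plan, not a proof, and the gap you flag is genuine.

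The fixes you suggest do not obviously close it. Knowing that \emph{some} $\alpha_k$ in the support of $\mu$ satisfies $\langle\mu,\alpha_k^\vee\rangle>0$ buys nothing unless that particular $\alpha_k$ lies in $G(J\setminus\{\mu\})$, and there is no a priori link between the support of $\mu$ and the good set of $J\setminus\{\mu\}$. Strengthening the induction to control all of $G(J')$ is itself delicate: your own analysis shows that passing from $J'$ to $J$ can \emph{remove} a simple root from the good set (Case~3 with $|B'(j)|=1$), so there is no simple monotonicity; and any refined hypothesis about how $G(J')$ meets the maximal elements of $J$ would have to be simultaneously true and stable under the inductive step, which seems to require precisely the type-specific structural information a uniform argument is meant to avoid. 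There is also a circularity lurking in ``choose $\mu$ appropriately'': the relevant $J'$ is determined by $\mu$, so you cannot pick $\mu$ based on $G(J')$.

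The paper's proof goes a completely different way: it is type-by-type. For $A_{n-1}$, $B_n$, $C_n$ one draws the Hasse diagram of the root poset and bounds the number of edges of the graph $G$ crossing from $J$ to $\Phi^+\setminus J$ by a lattice-path count; type $D_n$ requires a more involved label-chasing argument; and the exceptional types are handled by computer search. The authors explicitly remark after the proof that a type-independent argument would be of interest, so the obstacle you isolated is real enough that they did not overcome it either.
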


The proof of Lemma~\ref{lem:semiorder-orderideal} is quite technical, and will be delayed to the end of the section. Let us first see how Lemmas~\ref{lem:semiorder-uncommon} and \ref{lem:semiorder-orderideal} imply the main theorem of this section.

\begin{proof}[Proof of Theorem~\ref{thm:semiorder}]
Let $C=W^A$ where $A\subseteq\Phi^+$ is a nonempty order ideal. According to Lemma~\ref{lem:semiorder-uncommon}, it suffices to find some root $\zeta\in\Phi^+$ such that $\delta_C(\zeta)\geq\frac{1}{3}$. 

By Lemma~\ref{lem:semiorder-orderideal}, fix $\alpha\in A\cap\Delta$ such that there is at most one $\beta\in A$ such that $s_\alpha\beta\in\Phi^+\setminus A$. In the proof of Lemma~\ref{lem:semiorder-uncommon} we showed that right multiplication by $s_{\alpha}$ is an injection $C_{\alpha}\hookrightarrow C\setminus C_{\alpha}$. Let $K=C\setminus(C_{\alpha}\cup C_{\alpha}s_{\alpha})$. As $|C_{\alpha}s_{\alpha}|=|C_{\alpha}|=\delta_C(\alpha)|C|$, we have that $|K| \geq (1-2 \delta_C(\alpha))|C|$. Any $w\in K\subseteq C\setminus C_{\alpha}$ does not have $\alpha$ as an inversion, so $ws_{\alpha}$ does have $\alpha$ as an inversion. If we had $ws_{\alpha} \in C$, then it would lie in $C_{\alpha}$, and so $w$ would lie in $C_{\alpha}s_{\alpha}$, contradicting $w \in K$. Thus $ws_{\alpha}\notin C$, meaning that there is some $\gamma\in I_{\Phi}(ws_{\alpha})$ such that $\gamma\notin A$. As $\alpha\in A$, we have $\gamma\neq\alpha$ and thus $s_{\alpha}\gamma\in\Phi^+$, since $\alpha$ is the unique inversion of $s_{\alpha}$.  Since $ws_{\alpha}\gamma\in \Phi^-$ and $s_{\alpha}\gamma\in\Phi^+$ we have $s_{\alpha}\gamma\in I_{\Phi}(w)\subseteq A$. Now $s_{\alpha}\gamma\in A$ and $s_\alpha(s_{\alpha}\gamma)=\gamma\in\Phi^+\setminus A$, so $s_{\alpha}\gamma=\beta$, the unique $\beta\in A$ (now we know it exists) such that $s_{\alpha}\beta\in\Phi^+\setminus A$. This argument shows that $K\subseteq C_{\beta}$, so
\[
(1-2 \delta_C(\alpha))|C| \leq |K| \leq |C_{\beta}| = \delta_C(\beta) |C|.
\]
This implies that $\max(\delta_C(\alpha),\delta_C(\beta)) \geq \frac{1}{3}$.
\end{proof}

We now proceed to prove the remaining lemma; this is the only part of the argument which is type-dependent.

\begin{proof}[Proof of Lemma~\ref{lem:semiorder-orderideal}]
If $\Phi$ is reducible and $\alpha_i,\beta \in J$ lie in different components, then $s_{i}\beta=\beta \in J$. Thus we may assume that $\Phi$ is irreducible. By induction on the rank of $\Phi$, we may also assume that $\Delta\subseteq J$: if a simple root $\alpha_i$ does not belong to $J$, then $J$ is an order ideal in $\Phi'$, the root subsystem of $\Phi$ generated by $\Delta\setminus\{\alpha_i\}$, which is of a strictly smaller rank.

Let $H$ be the Hasse diagram of the root poset $\Phi^+$, and construct a graph $G$ on $\Phi^+$ in the following way. For two different positive roots $\beta$ and $\beta'$, we connect $\beta$ and $\beta'$ and label this edge by $\alpha\in\Delta$ if $s_{\alpha}\beta=\beta'$. Notice that each edge is uniquely labeled since if $\beta\neq\beta'$ are connected, $\beta-\beta'$ is a multiple of its label as $s_\alpha\beta=\beta-\frac{2\langle\alpha,\beta\rangle}{\langle\alpha,\alpha\rangle}\alpha$. In simply laced types ($A_{n-1}$, $D_n$, $E_6$, $E_7$, $E_8$), $\frac{2\langle\alpha,\beta\rangle}{\langle\alpha,\alpha\rangle}\in\{-1,0,1\}$ so the graph $G$ is precisely the Hasse diagram $H$ of the root poset. In types $B_n$, $C_n$, and $F_4$---where an edge with label four is present in the Coxeter diagram---if $\beta$ and $\beta'$ are connected in $G$, then their height difference is 1 or 2. 

We now consider each type separately, but the arguments are largely the same. In types $A,B,$ and $C$ we show that there are fewer than $2 |\Delta|$ edges in $G$ between $J$ and $\Phi^+ \setminus J$, so that some $\alpha_i \in \Delta$ labels at most one such edge. This is the simple root $\alpha_i$ from the lemma.

\textbf{Type} $A_{n-1}$. The Hasse diagram $H$, which is also the graph $G$, is shown in Figure~\ref{fig:typeA-rootposet}. Drawing the root $e_i-e_j$ at coordinate $(i+j-3,j-i-1)$, the graph $G$ has a grid structure as indicated by the figure. An order ideal $J\supset\Delta$ can be naturally associated to path $P$ from $(0,1)$ to $(2n-4,1)$ with steps $(1,1)$ or $(1,-1)$ that never goes below the $x$-axis, such that $J$ is exactly the set of elements below $P$. Edges in $G$ between $J$ and $\Phi^+\setminus J$ are exactly those that intersect $P$. Each step of $P$ crosses at most one edge in the Hasse diagram; thus there can be at most $2n-4< 2 |\Delta| = 2n-2$ edges between $J$ and $\Phi^+\setminus J$.

\begin{figure}[ht!]
\centering
\begin{tikzpicture}[scale=0.7]
\node at (0,0) {$\bullet$};
\node[below] at (0,0) {\tiny$e_{1}{-}e_{2}$};
\node at (1,1) {$\bullet$};
\node[below] at (1,1) {\tiny$e_{1}{-}e_{3}$};
\node at (2,0) {$\bullet$};
\node[below] at (2,0) {\tiny$e_{2}{-}e_{3}$};
\node at (2,2) {$\bullet$};
\node[below] at (2,2) {\tiny$e_{1}{-}e_{4}$};
\node at (3,1) {$\bullet$};
\node[below] at (3,1) {\tiny$e_{2}{-}e_{4}$};
\node at (4,0) {$\bullet$};
\node[below] at (4,0) {\tiny$e_{3}{-}e_{4}$};
\node at (3,3) {$\bullet$};
\node[below] at (3,3) {\tiny$e_{1}{-}e_{5}$};
\node at (4,2) {$\bullet$};
\node[below] at (4,2) {\tiny$e_{2}{-}e_{5}$};
\node at (5,1) {$\bullet$};
\node[below] at (5,1) {\tiny$e_{3}{-}e_{5}$};
\node at (6,0) {$\bullet$};
\node[below] at (6,0) {\tiny$e_{4}{-}e_{5}$};
\node at (4,4) {$\bullet$};
\node[below] at (4,4) {\tiny$e_{1}{-}e_{6}$};
\node at (5,3) {$\bullet$};
\node[below] at (5,3) {\tiny$e_{2}{-}e_{6}$};
\node at (6,2) {$\bullet$};
\node[below] at (6,2) {\tiny$e_{3}{-}e_{6}$};
\node at (7,1) {$\bullet$};
\node[below] at (7,1) {\tiny$e_{4}{-}e_{6}$};
\node at (8,0) {$\bullet$};
\node[below] at (8,0) {\tiny$e_{5}{-}e_{6}$};
\node at (5,5) {$\bullet$};
\node[below] at (5,5) {\tiny$e_{1}{-}e_{7}$};
\node at (6,4) {$\bullet$};
\node[below] at (6,4) {\tiny$e_{2}{-}e_{7}$};
\node at (7,3) {$\bullet$};
\node[below] at (7,3) {\tiny$e_{3}{-}e_{7}$};
\node at (8,2) {$\bullet$};
\node[below] at (8,2) {\tiny$e_{4}{-}e_{7}$};
\node at (9,1) {$\bullet$};
\node[below] at (9,1) {\tiny$e_{5}{-}e_{7}$};
\node at (10,0) {$\bullet$};
\node[below] at (10,0) {\tiny$e_{6}{-}e_{7}$};
\draw(0,0)--(0,0)--(5,5);
\draw(1,1)--(2,0)--(6,4);
\draw(2,2)--(4,0)--(7,3);
\draw(3,3)--(6,0)--(8,2);
\draw(4,4)--(8,0)--(9,1);
\draw(5,5)--(10,0)--(10,0);

\draw[dashed](0,1)--(1,2)--(2,1)--(3,0)--(4,1)--(5,2)--(6,3)--(7,2)--(8,3)--(10,1);
\end{tikzpicture}
\caption{The Hasse diagram of the root poset of type $A_{6}$ with a dashed line indicating an order ideal.}
\label{fig:typeA-rootposet}
\end{figure}

\textbf{Types} $B_n$ \textbf{and} $C_n$, $n\geq2$. Similarly to the case of type $A_{n-1}$, the Hasse diagram of the root poset has a grid structure so that for each order ideal $J$ there is an associated lattice path $P$ from $(0,1)$ to $(2n-2,y_0)$ for some integer $y_0$ with steps $(1,1)$ or $(1,-1)$ such that $J$ consists precisely of those roots below the path $P$. This is achieved by drawing the roots at the following positions in the plane.
\begin{itemize}
    \item Type $B_n$:
    \begin{itemize}
        \item[-] Draw $e_i-e_j$ at coordinate $(i+j-3,j-i-1)$.
        \item[-] Draw $e_i$ at coordinate $(n+i-2,n-i)$.
        \item[-] Draw $e_i+e_j$ at coordinate $(2n+i-j-1,2n+1-i-j)$.
    \end{itemize}
    \item Type $C_n$:
    \begin{itemize}
        \item[-] Draw $e_i-e_j$ at coordinate $(i+j-3,j-i-1)$.
        \item[-] Draw $2e_i$ at coordinate $(2n-2,2n-2i)$.
        \item[-] Draw $e_i+e_j$ at coordinate $(2n+i-j-2,2n-i-j)$.
    \end{itemize}
\end{itemize}

Although the graphs $G$ and $H$ are not the same in this case, the difference is minor and is shown in Figure~\ref{fig:typeBC-rootposet}. Crucially, each edge of $G$ is parallel to $(1,1)$ or $(1,-1)$, except for the vertical edges on the far right-hand-side in type $C$. Since the vertices of $G$ lie in $\mathbb{Z}^2$, this implies that each step of $P$ again crosses only one edge of $G$. Thus there are at most $2n-2 < 2 |\Delta|=2n$ edges in $G$ between $J$ and $\Phi^+\setminus J$.
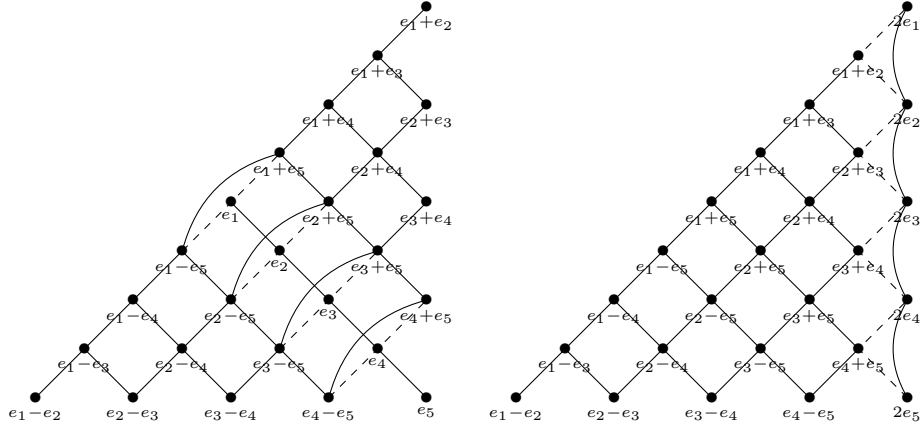
\begin{figure}[ht!]
\centering
\begin{tikzpicture}[scale=0.65]
\node at (0,0) {$\bullet$};
\node[below] at (0,0) {\tiny$e_{1}{-}e_{2}$};
\node at (8,8) {$\bullet$};
\node[below] at (8,8) {\tiny$e_{1}{+}e_{2}$};
\node at (1,1) {$\bullet$};
\node[below] at (1,1) {\tiny$e_{1}{-}e_{3}$};
\node at (7,7) {$\bullet$};
\node[below] at (7,7) {\tiny$e_{1}{+}e_{3}$};
\node at (2,0) {$\bullet$};
\node[below] at (2,0) {\tiny$e_{2}{-}e_{3}$};
\node at (8,6) {$\bullet$};
\node[below] at (8,6) {\tiny$e_{2}{+}e_{3}$};
\node at (2,2) {$\bullet$};
\node[below] at (2,2) {\tiny$e_{1}{-}e_{4}$};
\node at (6,6) {$\bullet$};
\node[below] at (6,6) {\tiny$e_{1}{+}e_{4}$};
\node at (3,1) {$\bullet$};
\node[below] at (3,1) {\tiny$e_{2}{-}e_{4}$};
\node at (7,5) {$\bullet$};
\node[below] at (7,5) {\tiny$e_{2}{+}e_{4}$};
\node at (4,0) {$\bullet$};
\node[below] at (4,0) {\tiny$e_{3}{-}e_{4}$};
\node at (8,4) {$\bullet$};
\node[below] at (8,4) {\tiny$e_{3}{+}e_{4}$};
\node at (3,3) {$\bullet$};
\node[below] at (3,3) {\tiny$e_{1}{-}e_{5}$};
\node at (5,5) {$\bullet$};
\node[below] at (5,5) {\tiny$e_{1}{+}e_{5}$};
\node at (4,2) {$\bullet$};
\node[below] at (4,2) {\tiny$e_{2}{-}e_{5}$};
\node at (6,4) {$\bullet$};
\node[below] at (6,4) {\tiny$e_{2}{+}e_{5}$};
\node at (5,1) {$\bullet$};
\node[below] at (5,1) {\tiny$e_{3}{-}e_{5}$};
\node at (7,3) {$\bullet$};
\node[below] at (7,3) {\tiny$e_{3}{+}e_{5}$};
\node at (6,0) {$\bullet$};
\node[below] at (6,0) {\tiny$e_{4}{-}e_{5}$};
\node at (8,2) {$\bullet$};
\node[below] at (8,2) {\tiny$e_{4}{+}e_{5}$};
\node at (4,4) {$\bullet$};
\node[below] at (4,4) {\tiny$e_1$};
\node at (5,3) {$\bullet$};
\node[below] at (5,3) {\tiny$e_2$};
\node at (6,2) {$\bullet$};
\node[below] at (6,2) {\tiny$e_3$};
\node at (7,1) {$\bullet$};
\node[below] at (7,1) {\tiny$e_4$};
\node at (8,0) {$\bullet$};
\node[below] at (8,0) {\tiny$e_5$};
\draw(0,0)--(0,0)--(3,3);
\draw(8,8)--(8,8)--(5,5);
\draw(1,1)--(2,0)--(4,2);
\draw(7,7)--(8,6)--(6,4);
\draw(2,2)--(4,0)--(5,1);
\draw(6,6)--(8,4)--(7,3);
\draw(3,3)--(6,0)--(6,0);
\draw(5,5)--(8,2)--(8,2);
\draw(4,4)--(8,0);
\draw[dashed](3,3)--(5,5);
\draw(3,3) to[bend left] (5,5);
\draw[dashed](4,2)--(6,4);
\draw(4,2) to[bend left] (6,4);
\draw[dashed](5,1)--(7,3);
\draw(5,1) to[bend left] (7,3);
\draw[dashed](6,0)--(8,2);
\draw(6,0) to[bend left] (8,2);
\end{tikzpicture}
\ 
\begin{tikzpicture}[scale=0.65]
\node at (0,0) {$\bullet$};
\node[below] at (0,0) {\tiny$e_{1}{-}e_{2}$};
\node at (7,7) {$\bullet$};
\node[below] at (7,7) {\tiny$e_{1}{+}e_{2}$};
\node at (1,1) {$\bullet$};
\node[below] at (1,1) {\tiny$e_{1}{-}e_{3}$};
\node at (6,6) {$\bullet$};
\node[below] at (6,6) {\tiny$e_{1}{+}e_{3}$};
\node at (2,0) {$\bullet$};
\node[below] at (2,0) {\tiny$e_{2}{-}e_{3}$};
\node at (7,5) {$\bullet$};
\node[below] at (7,5) {\tiny$e_{2}{+}e_{3}$};
\node at (2,2) {$\bullet$};
\node[below] at (2,2) {\tiny$e_{1}{-}e_{4}$};
\node at (5,5) {$\bullet$};
\node[below] at (5,5) {\tiny$e_{1}{+}e_{4}$};
\node at (3,1) {$\bullet$};
\node[below] at (3,1) {\tiny$e_{2}{-}e_{4}$};
\node at (6,4) {$\bullet$};
\node[below] at (6,4) {\tiny$e_{2}{+}e_{4}$};
\node at (4,0) {$\bullet$};
\node[below] at (4,0) {\tiny$e_{3}{-}e_{4}$};
\node at (7,3) {$\bullet$};
\node[below] at (7,3) {\tiny$e_{3}{+}e_{4}$};
\node at (3,3) {$\bullet$};
\node[below] at (3,3) {\tiny$e_{1}{-}e_{5}$};
\node at (4,4) {$\bullet$};
\node[below] at (4,4) {\tiny$e_{1}{+}e_{5}$};
\node at (4,2) {$\bullet$};
\node[below] at (4,2) {\tiny$e_{2}{-}e_{5}$};
\node at (5,3) {$\bullet$};
\node[below] at (5,3) {\tiny$e_{2}{+}e_{5}$};
\node at (5,1) {$\bullet$};
\node[below] at (5,1) {\tiny$e_{3}{-}e_{5}$};
\node at (6,2) {$\bullet$};
\node[below] at (6,2) {\tiny$e_{3}{+}e_{5}$};
\node at (6,0) {$\bullet$};
\node[below] at (6,0) {\tiny$e_{4}{-}e_{5}$};
\node at (7,1) {$\bullet$};
\node[below] at (7,1) {\tiny$e_{4}{+}e_{5}$};
\node at (8,8) {$\bullet$};
\node[below] at (8,8) {\tiny$2e_1$};
\node at (8,6) {$\bullet$};
\node[below] at (8,6) {\tiny$2e_2$};
\node at (8,4) {$\bullet$};
\node[below] at (8,4) {\tiny$2e_3$};
\node at (8,2) {$\bullet$};
\node[below] at (8,2) {\tiny$2e_4$};
\node at (8,0) {$\bullet$};
\node[below] at (8,0) {\tiny$2e_5$};
\draw(0,0)--(0,0)--(3,3);
\draw(7,7)--(7,7)--(4,4);
\draw(1,1)--(2,0)--(4,2);
\draw(6,6)--(7,5)--(5,3);
\draw(2,2)--(4,0)--(5,1);
\draw(5,5)--(7,3)--(6,2);
\draw(3,3)--(6,0)--(6,0);
\draw(4,4)--(7,1)--(7,1);
\draw(3,3)--(4,4);
\draw[dashed](8,0)--(7,1)--(8,2);
\draw(8,0) to[bend left] (8,2);
\draw(4,2)--(5,3);
\draw[dashed](8,2)--(7,3)--(8,4);
\draw(8,2) to[bend left] (8,4);
\draw(5,1)--(6,2);
\draw[dashed](8,4)--(7,5)--(8,6);
\draw(8,4) to[bend left] (8,6);
\draw(6,0)--(7,1);
\draw[dashed](8,6)--(7,7)--(8,8);
\draw(8,6) to[bend left] (8,8);
\end{tikzpicture}
\caption{Hasse diagram $H$ and graph $G$ of the root poset of type $B_5$ (left) and $C_5$ (right), where edges in $H\setminus G$ are drawn in dashed lines and edges in $G\setminus H$ are curved.}
\label{fig:typeBC-rootposet}
\end{figure}

\textbf{Type} $D_n$, $n\geq4$. As type $D_n$ is simply laced, $G=H$. The depiction of the root poset of type $D_n$ in Figure~\ref{fig:typeD-rootposet} may be helpful when reading the following argument.

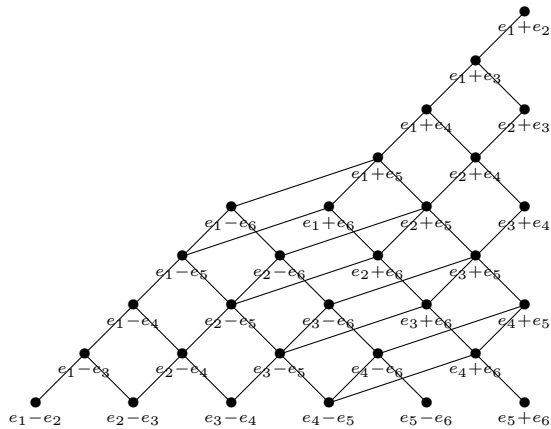
\begin{figure}[ht!]
\centering
\begin{tikzpicture}[scale=0.65]
\node at (0,0) {$\bullet$};
\node[below] at (0,0) {\tiny$e_{1}{-}e_{2}$};
\node at (10,8) {$\bullet$};
\node[below] at (10,8) {\tiny$e_{1}{+}e_{2}$};
\node at (1,1) {$\bullet$};
\node[below] at (1,1) {\tiny$e_{1}{-}e_{3}$};
\node at (9,7) {$\bullet$};
\node[below] at (9,7) {\tiny$e_{1}{+}e_{3}$};
\node at (2,0) {$\bullet$};
\node[below] at (2,0) {\tiny$e_{2}{-}e_{3}$};
\node at (10,6) {$\bullet$};
\node[below] at (10,6) {\tiny$e_{2}{+}e_{3}$};
\node at (2,2) {$\bullet$};
\node[below] at (2,2) {\tiny$e_{1}{-}e_{4}$};
\node at (8,6) {$\bullet$};
\node[below] at (8,6) {\tiny$e_{1}{+}e_{4}$};
\node at (3,1) {$\bullet$};
\node[below] at (3,1) {\tiny$e_{2}{-}e_{4}$};
\node at (9,5) {$\bullet$};
\node[below] at (9,5) {\tiny$e_{2}{+}e_{4}$};
\node at (4,0) {$\bullet$};
\node[below] at (4,0) {\tiny$e_{3}{-}e_{4}$};
\node at (10,4) {$\bullet$};
\node[below] at (10,4) {\tiny$e_{3}{+}e_{4}$};
\node at (3,3) {$\bullet$};
\node[below] at (3,3) {\tiny$e_{1}{-}e_{5}$};
\node at (7,5) {$\bullet$};
\node[below] at (7,5) {\tiny$e_{1}{+}e_{5}$};
\node at (4,2) {$\bullet$};
\node[below] at (4,2) {\tiny$e_{2}{-}e_{5}$};
\node at (8,4) {$\bullet$};
\node[below] at (8,4) {\tiny$e_{2}{+}e_{5}$};
\node at (5,1) {$\bullet$};
\node[below] at (5,1) {\tiny$e_{3}{-}e_{5}$};
\node at (9,3) {$\bullet$};
\node[below] at (9,3) {\tiny$e_{3}{+}e_{5}$};
\node at (6,0) {$\bullet$};
\node[below] at (6,0) {\tiny$e_{4}{-}e_{5}$};
\node at (10,2) {$\bullet$};
\node[below] at (10,2) {\tiny$e_{4}{+}e_{5}$};
\node at (4,4) {$\bullet$};
\node[below] at (4,4) {\tiny$e_{1}{-}e_{6}$};
\node at (6,4) {$\bullet$};
\node[below] at (6,4) {\tiny$e_{1}{+}e_{6}$};
\node at (5,3) {$\bullet$};
\node[below] at (5,3) {\tiny$e_{2}{-}e_{6}$};
\node at (7,3) {$\bullet$};
\node[below] at (7,3) {\tiny$e_{2}{+}e_{6}$};
\node at (6,2) {$\bullet$};
\node[below] at (6,2) {\tiny$e_{3}{-}e_{6}$};
\node at (8,2) {$\bullet$};
\node[below] at (8,2) {\tiny$e_{3}{+}e_{6}$};
\node at (7,1) {$\bullet$};
\node[below] at (7,1) {\tiny$e_{4}{-}e_{6}$};
\node at (9,1) {$\bullet$};
\node[below] at (9,1) {\tiny$e_{4}{+}e_{6}$};
\node at (8,0) {$\bullet$};
\node[below] at (8,0) {\tiny$e_{5}{-}e_{6}$};
\node at (10,0) {$\bullet$};
\node[below] at (10,0) {\tiny$e_{5}{+}e_{6}$};
\draw(0,0)--(0,0)--(4,4);
\draw(10,8)--(10,8)--(6,4);
\draw(1,1)--(2,0)--(5,3);
\draw(9,7)--(10,6)--(7,3);
\draw(2,2)--(4,0)--(6,2);
\draw(8,6)--(10,4)--(8,2);
\draw(3,3)--(6,0)--(7,1);
\draw(7,5)--(10,2)--(9,1);
\draw(4,4)--(8,0)--(8,0);
\draw(6,4)--(10,0)--(10,0);
\draw(4,4)--(7,5);
\draw(3,3)--(6,4);
\draw(5,3)--(8,4);
\draw(4,2)--(7,3);
\draw(6,2)--(9,3);
\draw(5,1)--(8,2);
\draw(7,1)--(10,2);
\draw(6,0)--(9,1);
\end{tikzpicture}
\caption{The Hasse diagram of the root poset of type $D_6$.  Edges with label $e_{n-1}+e_n$ are precisely the longer edges with slope not in $\{\pm1\}$. }
\label{fig:typeD-rootposet}
\end{figure}

Suppose that $e_{n-2}+e_n \not \in J$. Then all roots of the form $e_i+e_j$ are not in $J$ except the simple root $e_{n-1}+e_n$. Let $\Delta' = \Delta \setminus \{e_{n-1}+e_n\}$. All edges in $G$ with labels in $\Delta'$ that can possibly go between $J$ and $\Phi^+\setminus J$ are edges of the form $e_i-e_j<e_{i'}-e_{j'}$ or $e_{n-1}+e_n<e_{n-2}+e_n$. This can be seen easily from Figure~\ref{fig:typeD-rootposet} if we ignore all edges labeled $e_{n-1}+e_n$. These edges are precisely the type $A_{n-1}$ edges, plus an additional edge corresponding to $e_{n-1}+e_n<e_{n-2}+e_n$. Using the type $A$ argument, we see that there are at most $(2n-4)+1=2n-3< 2|\Delta'|=2n-2$ such edges, so we are done in this case.

By the previous paragraph, we may assume that $e_{n-2}+e_n \in J$. Since there is an automorphism of the root poset induced by the automorphism of the Coxeter diagram exchanging $e_{n-1}-e_n$ and $e_{n-1}+e_n$, we may also assume that $e_{n-2}-e_n \in J$. The counting arguments used above cannot work in this case, and we need to pay attention to labels of edges. For each simple root $\alpha\in\Delta$, let 
\[
B(\alpha) \coloneqq \{\beta\in J\:|\: \alpha+\beta\in\Phi^+\setminus J\}.
\]
We will show that there is some $\alpha$ with $|B(\alpha)| \leq 1$. 

Suppose now that for some $k\leq n-2$, the set $B(e_k-e_{k+1})$ contains at least two roots of the form $e_i-e_k$ with $i<k$, say $e_i-e_k$ and $e_{i'}-e_k$, with $i<i'$. Suppose also that we have chosen the smallest such $k$. Then $e_i-e_k,e_{i'}-e_k\in J$ while $e_i-e_{k+1},e_{i'}-e_{k+1}\notin J$. Consider the simple root $\alpha=e_i-e_{i+1}$ and suppose $\beta\in B(\alpha)$. If $\beta=e_j+e_{i+1}$, then as $i+1\leq i'$ we have $\beta\geq e_{i'}-e_{k+1}\notin J$, so $\beta\notin J$, contrary to our assumption. If instead $\beta=e_{i+1}-e_j$ for some $j>i+1$, and if $j\leq k$, then $\alpha+\beta=e_{i}-e_j\leq e_i-e_k\in J$. This is again impossible; the case $j\geq k+1$ is similar. As a result, every root $\beta\in B(\alpha)$ must be of the form $e_j-e_i$ for $j<i$. Since $k$ is minimal, we must have that $|B(\alpha)| \leq 1$, and we are done. Therefore, we may assume in the remainder that for all $k \leq n-2$, $B(e_k-e_{k+1})$ contains at most one root of the form $e_i-e_k$ with $i<k$.

Consider the simple root $e_{n-2}-e_{n-1}$; since $e_{n-2}-e_n\in J$ we see that $e_{n-1}-e_n\notin B(e_{n-2}-e_{n-1})$. By the paragraph above, there is at most one root in $B(e_{n-2}-e_{n-1})$ of the form $e_i-e_j$. Either $|B(e_{n-2}-e_{n-1})| \leq 1$ and we are done, or it contains a root of the form $e_i+e_j$, which must in fact be of the form $e_i+e_{n-1}$ with $i<n-2$. In the later case, we see that $e_i+e_{n-1} \geq e_{n-3}+e_{n-1} \in J$. Now suppose that $e_k+e_{k+2}\in J$ for some $k>1$. Consider the simple root $\alpha=e_k-e_{k+1}$ and a root $\beta\in B(\alpha)$ of the form $\beta=e_i-e_j$. If $i=k+1$, then $\alpha+\beta=e_k-e_{j}\leq e_k+e_{k+2}\in J$ so $\alpha+\beta\in J$ which is not possible. There is at most one $\beta\in B(\alpha)$ of the form $e_i-e_k$. Either $|B(\alpha)| \leq 1$ and we are done, or there is some $e_i+e_{k+1}\in B(\alpha)$. In the latter case, if $i>k+1$, then
\[
(e_i+e_{k+1})+(e_k-e_{k+1})=e_i+e_{k}\leq e_{k}+e_{k+2}\in J,
\]
which is impossible. As a result, $i<k$, and $e_i+e_{k+1} \geq e_{k-1}+e_{k+1}\in J$. We have shown by induction that $e_k+e_{k+2}\in J$ for all $k\geq1$. This finishes the proof, since it implies that $J$ contains every positive root except the highest one, making $B(\alpha)=\emptyset$ for all $\alpha\in\Delta$ except possibly $\alpha=e_2-e_3$. 

\textbf{Exceptional types}. We deal with the exceptional types $E_6, E_7, E_8, F_4, G_2$ via a direct computer search, using SageMath. This search was implemented by iterating through all order ideals $J$ in the root poset, simple roots $\alpha_i$, and other positive roots $\beta \in J$ and counting occurrences of $s_i \beta \in \Phi^+ \setminus J$. The simple roots, positive roots, root poset and its order ideals, and the action of the Weyl group on the roots have all been implemented as methods in the \texttt{RootSystem} class in SageMath, for which we are very grateful to its developers.

There are 25080 order ideals in the root poset of type $E_8$, for which we were able to verify the lemma in 20 minutes on a personal computer. The other exceptional types each took under one minute to check.
\end{proof}

It would be interesting to find a type-independent proof of Lemma~\ref{lem:semiorder-orderideal}, as this would yield a uniform proof of Theorem~\ref{thm:semiorder}.

\section{A uniform bound for finite Weyl groups}
\label{sec:bounds-Weyl}

In this section we provide a uniform lower bound for the balance constant of any non-singleton convex subset in any finite Weyl group; in the case of the symmetric group such a constant bound away from zero was first established by Kahn and Saks \cite{Kahn-Saks}. 

\begin{theorem}\label{thm:uniform-bound}
There exists an absolute constant $\epsilon>0$ such that for any non-singleton convex set $C$ in any finite Weyl group we have
\[
b(C)>\epsilon.
\]
\end{theorem}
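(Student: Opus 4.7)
My plan is to build a geometric model of any convex $C\subseteq W$---a \emph{generalized order polytope}---and then mimic Kahn and Linial's Brunn--Minkowski-based argument for posets in this new setting.

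Fix once and for all a bounded convex $W$-invariant body $K\subset E$; for concreteness one may take $K$ to be the Euclidean unit ball, but any $W$-invariant choice will do. For a convex $C\subseteq W$ define
\[
\mathcal{O}(C)\coloneqq K\cap\bigcup_{w\in C}\overline{R_w}.
\]
By the geometric characterization of convexity at the end of Section~\ref{sec:background-weyl-groups}, $\bigcup_{w\in C}\overline{R_w}$ is a convex polyhedral cone, so $\mathcal{O}(C)$ is a convex polytope. The $W$-invariance of $K$ gives $\mathrm{vol}(\overline{R_w}\cap K)=\mathrm{vol}(K)/|W|$ for every $w$, whence $\mathrm{vol}(\mathcal{O}(C))=|C|\,\mathrm{vol}(K)/|W|$, and unpacking the chamber description of $R_w$ via $I_\Phi(w)$ yields, for every $\alpha\in\Phi^+$,
\[
\delta_C(\alpha)=\frac{\mathrm{vol}\bigl(\mathcal{O}(C)\cap\{\langle\alpha,x\rangle<0\}\bigr)}{\mathrm{vol}(\mathcal{O}(C))}.
\]
In type $A_{n-1}$, replacing $K$ by the unit cube $[0,1]^n$ (which also assigns measure $1/n!$ to each chamber) recovers Stanley's classical order polytope, confirming that $\mathcal{O}(C)$ is a genuine generalization.

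With this setup, Brunn--Minkowski implies that for each positive root $\alpha$ the slice function $g_\alpha(t)\coloneqq\mathrm{vol}_{r-1}\bigl(\mathcal{O}(C)\cap\{\langle\alpha,x\rangle=t\}\bigr)$ is log-concave on its support; equivalently, for $x$ uniform on $\mathcal{O}(C)$, the random variable $Y_\alpha\coloneqq\langle\alpha,x\rangle$ has a log-concave density with $\mathbb{P}[Y_\alpha<0]=\delta_C(\alpha)$. Suppose now for contradiction that $b(C)<\epsilon$ for some tiny absolute $\epsilon>0$ to be chosen later, so that for every $\alpha\in\Phi^+$ one of $\delta_C(\alpha),1-\delta_C(\alpha)$ is below $\epsilon$. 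Standard concentration inequalities for log-concave densities then force $|\mathbb{E}[Y_\alpha]|/\sqrt{\mathrm{Var}[Y_\alpha]}\to\infty$ as $\epsilon\to 0$, uniformly in $\alpha$. Geometrically, this says that the centroid of $\mathcal{O}(C)$ lies on a definite side of every reflecting hyperplane $H_\alpha$, at a distance that dominates the standard deviation of $Y_\alpha$.

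To close the argument I would combine this concentration with two basic structural features of $\mathcal{O}(C)$: since $|C|\geq 2$, the polytope contains at least two full chambers $\overline{R_w}\cap K$, and any two such chambers are separated by at least one reflecting hyperplane $H_\alpha$; and since every Weyl chamber has the origin on its boundary, $\mathcal{O}(C)$ always contains points arbitrarily close to $0$. Quantifying the resulting minimum two-sided spread of $Y_\alpha$ and comparing it against the concentration estimate above will yield a contradiction once $\epsilon$ is small enough. The main obstacle is to make this contradiction \emph{uniform in the rank and type of $W$}: the natural scale of $\mathrm{Var}[Y_\alpha]$ depends on the chamber geometry, so either one works in an isotropic normalization of $\mathcal{O}(C)$ (in the style of slicing inequalities in convex geometry) or one exploits crystallographic uniformity of $\Phi$ to keep all constants type-independent. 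Only this uniformity produces the absolute constant $\epsilon$ promised by the theorem.
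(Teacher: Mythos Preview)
Your proposal shares the paper's overall strategy---build a convex body out of the Weyl chambers of $C$ and apply Brunn--Minkowski---but it has a genuine gap at exactly the point you flag yourself as ``the main obstacle.''

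\medskip

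\textbf{What is missing.} After you conclude that $|\mathbb{E}[Y_\alpha]|/\sqrt{\mathrm{Var}[Y_\alpha]}$ is large for \emph{every} $\alpha$, you have not identified any single root for which this is impossible. The two structural facts you cite (two chambers are separated by some $H_\alpha$; the origin lies in $\mathcal{O}(C)$) only tell you that $0$ is in the support of $Y_\alpha$ and that $Y_\alpha$ takes both signs for at least one $\alpha$. Neither controls the ratio $\mathbb{E}[Y_\alpha]/\sqrt{\mathrm{Var}[Y_\alpha]}$: a long thin body can have the origin on its boundary, live on both sides of $H_\alpha$, and still have its centroid many standard deviations away from $H_\alpha$. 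Your suggestion of passing to isotropic position does not help, because after normalization the hyperplanes $H_\alpha$ are no longer the ones computing $\delta_C(\alpha)$, and in any case the implicit constants in log-concave concentration are dimension-dependent in the direction you would need.

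\medskip

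\textbf{How the paper closes this.} The paper does \emph{not} intersect with a generic $W$-invariant body; it uses the fundamental alcove $Q_{\id}=\{x:\langle x,\alpha\rangle\ge 0,\ \langle x,\xi\rangle\le 1\}$ cut off by the highest root $\xi$. This is the decisive choice: $Q_{\id}$ is a simplex with vertices $0,\ \omega_1^\vee/c_1,\dots,\omega_r^\vee/c_r$ (where $\xi=\sum c_i\alpha_i$), so the centroid of $\mathcal{O}(C)$ is an explicit average over $C$ of translates of $\sum_i \omega_i^\vee/c_i$. A pigeonhole argument on a suitable system of simple roots then produces a specific $\beta\in\Phi^+$ with $\emptyset\neq C_\beta\subsetneq C$ and $|\langle o_C,\beta\rangle|\le m/(r+1)$, where $m$ depends only on the coefficients $c_i$ of the highest root---hence only on the Cartan type, and is constant along each classical series. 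One then applies a quantitative Brunn--Minkowski estimate (a double-cone comparison) directly to this $\beta$ to get $\delta_C(\beta)\in[1/2e^{mm_1},\,1-1/2e^{mm_1}]$. The absolute constant $\epsilon$ comes from taking the minimum of these type-dependent bounds over the finitely many Cartan types; it is \emph{not} obtained by a type-uniform argument.

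\medskip

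In short: your contradiction never materializes because you have no mechanism to single out a root whose hyperplane is near the centroid. The paper supplies exactly that mechanism, and it relies essentially on the alcove (not the ball) so that the centroid is computable and the relevant constants are governed by the highest-root coefficients rather than by the rank.
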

Theorem~\ref{thm:uniform-bound} will be proven as Theorem~\ref{thm:uniform-bound-specific} after more notation has been introduced. The bounds for each irreducible type are given in Table~\ref{tab:centroid-data}. In particular, we can take $\epsilon=1/2e^{12}$ as a uniform bound. However, the bound for classical types are much better: $1/2e$ for type $A_n$ (obtained by Kahn and Linial~\cite{Kahn-Linial}), $1/2e^2$ for types $B_n$ and $C_n$, and $1/2e^4$ for type $D_n$. 

Our proof of Theorem~\ref{thm:uniform-bound} is type-independent and uses a geometric argument inspired by that of Kahn and Linial \cite{Kahn-Linial}. In Section~\ref{sec:brunn-minkowski} we apply the Brunn--Minkowski Theorem from convex geometry to obtain useful bounds for general polytopes and in Section~\ref{sec:order-polytopes-bound} we prove Theorem~\ref{thm:uniform-bound} by applying these bounds to \emph{generalized order polytopes}, which we introduce. 

In Section~\ref{sec:short-root-order-polytope}, we provide a special treatment for type $B_n$, where a curious ``short-root order polytope" exists, allowing for improved bounds.

\subsection{An important lemma}

We start with a lemma that will not be used until Section~\ref{sec:short-root-order-polytope}, but the techniques will be useful for later sections. 

\begin{lemma}\label{lem:select-root}
Let $C\subseteq W(\Phi)$ be a non-singleton convex set. For $\beta\in\Phi$, define
$$h(\beta)\coloneqq\frac{1}{|C|}\sum_{w\in C}\langle w\beta,\omega_1^{\vee}+\cdots+\omega_r^{\vee}\rangle.$$
Then there exists $\beta\in\Phi^+$ such that $|h(\beta)|<1$.
\end{lemma}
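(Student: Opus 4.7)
The plan is to reformulate $h$ as a single inner product and then derive a contradiction from assuming $|h(\beta)| \geq 1$ for all $\beta \in \Phi^+$. Setting $\rho^{\vee} \coloneqq \omega_1^{\vee} + \cdots + \omega_r^{\vee}$ and using that $W$ acts on $E$ by orthogonal transformations, I would rewrite
\[
h(\beta) = \langle \beta,\, \bar v\rangle, \qquad \bar v \coloneqq \frac{1}{|C|}\sum_{w \in C} w^{-1}\rho^{\vee}.
\]
The geometric picture from Section~\ref{sec:background-weyl-groups} is then crucial: each vector $w^{-1}\rho^{\vee}$ lies in the open Weyl chamber $R_w$ (since $\langle w^{-1}\rho^{\vee}, \alpha\rangle = \height(w\alpha)$ has exactly the sign prescribed by $I_\Phi(w)$), and by Tits' theorem the set $\bigcup_{w \in C}\overline{R_w}$ is a convex polyhedron, hence contains the convex combination $\bar v$. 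Therefore $\bar v \in \overline{R_{w_0}}$ for some $w_0 \in C$, and $\tilde v \coloneqq w_0\bar v$ lies in the closed fundamental chamber $\overline{R_{\id}}$.

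Now I would suppose for contradiction that $|h(\beta)| \geq 1$ for every $\beta \in \Phi^+$. Writing $h(\beta) = \langle w_0\beta, \tilde v\rangle$, the sign of $h(\beta)$ is dictated by whether $\beta \in I_{\Phi}(w_0)$, because $\tilde v \in \overline{R_{\id}}$ has nonnegative pairing with every positive root and nonpositive pairing with every negative root. Combining this with $|h(\beta)| \geq 1$, and observing that the maps $\beta \mapsto w_0\beta$ on $\Phi^+ \setminus I_{\Phi}(w_0)$ and $\beta \mapsto -w_0\beta$ on $I_{\Phi}(w_0)$ together give a bijection $\Phi^+ \to \Phi^+$, I conclude that $\langle \gamma, \tilde v\rangle \geq 1$ for every $\gamma \in \Phi^+$. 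Specializing to simple roots and using $\langle \alpha_i, \rho^{\vee}\rangle = 1$, this yields the dominance statement $\tilde v - \rho^{\vee} \in \overline{R_{\id}}$.

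The conclusion will then follow from a norm estimate. Expanding
\[
\|\tilde v\|^2 = \|\rho^{\vee}\|^2 + 2\langle \rho^{\vee},\, \tilde v - \rho^{\vee}\rangle + \|\tilde v - \rho^{\vee}\|^2,
\]
and using that any two elements of $\overline{R_{\id}}$ have nonnegative inner product (both vectors being nonnegative combinations of the fundamental coweights, whose pairwise inner products are nonnegative since the inverse Cartan matrix has nonnegative entries), I get $\|\tilde v\|^2 \geq \|\rho^{\vee}\|^2$. On the other hand $\|\tilde v\| = \|\bar v\|$ since $w_0$ is orthogonal, and the vectors $\{w^{-1}\rho^{\vee} : w \in C\}$ all have norm $\|\rho^{\vee}\|$ while being pairwise distinct (as they occupy distinct open chambers $R_w$). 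Because $|C| \geq 2$, strict Jensen for the squared norm gives $\|\bar v\| < \|\rho^{\vee}\|$, contradicting $\|\tilde v\| \geq \|\rho^{\vee}\|$. I expect the most delicate step to be the sign analysis of the middle paragraph---turning the pointwise absolute-value assumption $|h(\beta)| \geq 1$ into the clean dominance $\tilde v - \rho^{\vee} \in \overline{R_{\id}}$---since it is there that the specific choice of the chamber $\overline{R_{w_0}}$ containing $\bar v$ (and the corresponding bijection of $\Phi^+$ with itself) is used in an essential way.
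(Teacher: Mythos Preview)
Your proof is correct, and it takes a genuinely different route from the paper's. Both arguments begin by rewriting $h(\beta)=\langle\beta,\bar v\rangle$ and then passing to a positive system adapted to $\bar v$; in the paper this is done by declaring $\Phi_v^+=\{\beta:\langle\beta,\bar v\rangle>0\}$, while you do it by choosing $w_0$ with $\bar v\in\overline{R_{w_0}}$ (these amount to the same thing, since then $\Phi_v^+=w_0^{-1}\Phi^+$). From there the arguments diverge. The paper bounds $h$ from above by $\height(\Phi)$ via the highest root, forces equality $h(\gamma_i)=1$ on all new simple roots, and then runs a reverse induction on height to show that every $w\in C$ sends $\Phi_v^+$ to $\Phi^+$, contradicting $|C|>1$. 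You instead extract the dominance $\tilde v-\rho^{\vee}\in\overline{R_{\id}}$, use nonnegativity of $\langle\omega_i^{\vee},\omega_j^{\vee}\rangle$ (equivalently, nonnegativity of the inverse of the Gram matrix of the simple roots, a standard Stieltjes-matrix fact) to get $\|\tilde v\|\geq\|\rho^{\vee}\|$, and contradict this with the strict-convexity estimate $\|\bar v\|<\|\rho^{\vee}\|$ coming from averaging distinct points on a sphere. Your argument is shorter and avoids the height induction; the paper's argument is more combinatorial and yields the stronger intermediate conclusion that all $w\in C$ coincide on $\Phi_v^+$. One minor remark: your appeal to the convexity of $C$ (to place $\bar v$ in some $\overline{R_{w_0}}$ with $w_0\in C$) is correct but unnecessary, since any point of $E$ lies in some closed chamber and the rest of your argument never uses $w_0\in C$; in fact neither proof actually requires $C$ to be convex, only that $|C|\geq 2$.
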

\begin{proof}
Suppose that $|h(\beta)|\geq1$ for all $\beta\in\Phi$. We prove the contrapositive of the lemma by showing that $C$ is a singleton.

The symmetric bilinear form $\langle \cdot , \cdot \rangle$ is $W$-invariant, so
\begin{align*}
h(\beta)\coloneqq&\frac{1}{|C|}\sum_{w\in C}\langle w\beta,\omega_1^{\vee}+\cdots+\omega_r^{\vee}\rangle\\
=&\frac{1}{|C|}\sum_{w\in C}\left\langle \beta,w^{-1}(\omega_1^{\vee}+\cdots+\omega_r^{\vee})\right\rangle=\langle \beta,v\rangle
\end{align*}
for some $v\in E$, where $E$ is the ambient Euclidean space. Consider the set \[
\Phi_{v}^{+}\coloneqq\{\beta\in\Phi\:|\:\langle\beta,v\rangle>0\},
\]
which is another choice of positive roots for $\Phi$, with a corresponding set of simple roots $\Delta_v=\{\gamma_1,\ldots,\gamma_r\}$. There is a highest root $\xi_v\in\Phi^+_v$ with respect to this choice of positive roots. By our assumption, $h(\gamma_i)\geq1$ for $i=1,\ldots,r$, so $h(\xi_v)\geq\height(\Phi)$. However, $$h(\beta)=\frac{1}{C}\sum_{w\in C}\height(w\beta)\leq \height(\Phi)$$
for all $\beta\in\Phi$. As a result, we must have that $h(\xi_v)=\height(\Phi)$, which implies $h(\gamma_i)=1$ for all $i=1,\ldots,r$. At the same time, we must have $w\xi_v=\xi$ for all $w\in C$. 

Let $\height_v$ be the height function with respect to $\Phi^+_v$; that is, 
\[
\height_v(\beta)=c_1+\cdots+c_r
\]
if $\beta=c_1\gamma_1+\cdots+c_r\gamma_r$. In fact we must have $\height_v=h$, as both are linear functions on $E$ and agree on the basis $\Delta_v$. We will now show that $\height(w\beta)=\height_v(\beta)$ for all $\beta\in\Phi^+_v$ and $w\in C$, by reverse induction on $k=\height_v(\beta)$. The base case $k=\height(\Phi)$, $\beta=\xi_v$ has already been established. Suppose that we are done with $k$ and let us take $\beta\in\Phi_v^+$ with $\height_v(\beta)=k-1$. Let there be $M$ roots $\Phi^{\geq k}\coloneqq\{\beta_1,\ldots,\beta_M\}$ with height at least $k$ and let $\Phi^{\geq k}_v=\{\beta_1',\ldots,\beta_M'\}$ be the $M$ roots with $\height_v$ at least $k$. We may view each $w\in C$ as a permutation of the roots. By the induction hypothesis, $w$ restricts to a bijection from $\Phi_v^{\geq k}$ to $\Phi^{\geq k}$. Since $\beta\notin \Phi_v^{\geq k}$ we have $w\beta\notin\Phi^{\geq k}$ and so $\height(w\beta)\leq k-1$. Therefore, $$h(\beta)=\frac{1}{C}\sum_{w\in C}\height(w\beta)\leq k-1$$
but we know $h(\beta)=\height_v(\beta)=k-1$. This implies that $\height(w\beta)=k-1$ as desired. Thus we see that for every $w\in C$, $w$ sends $\Phi_v^+$ to $\Phi^+$. By the one-to-one correspondence (see \cite[\S 1.4]{Humphreys}) between elements of the Weyl group and choices of positive roots, there is a unique $w$ sending $\Phi_v^+$ to $\Phi^+$, so $C$ is a singleton.
\end{proof}

\subsection{A general bound for convex bodies via Brunn--Minkowski}\label{sec:brunn-minkowski}
Our arguments in this section are inspired by those of Kahn and Linial \cite{Kahn-Linial}, with some generalization required for application to general Weyl groups. 

For a convex body $Q\subseteq\R^n$ of full dimension, and a vector $v$ in $\R^n$, define $Q^+_v\coloneqq\{x\in Q\:|\: \langle v,x\rangle\geq0\}$ and $Q_v^-\coloneqq\{x\in Q\:|\:\langle v,x\rangle\leq0\}$, the two pieces of $Q$ split by the hyperplane orthogonal to $v$. Let 
\[
Q_{v}^{\lambda}\coloneqq\{x\in Q\:|\:\langle v,x\rangle=\lambda\},
\]
an $(n-1)$-dimensional convex body. We start with a lemma that follows directly from the classical Brunn--Minkowski theorem.

\begin{lemma}\label{lem:Brunn-Minkowski}
The function $\lambda\mapsto \Vol(Q_v^{\lambda})^{\frac{1}{n-1}}$ is concave on $\R$. 
\end{lemma}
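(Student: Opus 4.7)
The plan is to derive this directly from the classical Brunn--Minkowski inequality applied in dimension $n-1$. Recall that Brunn--Minkowski states that for non-empty compact sets $A,B \subseteq \R^m$,
\[
\Vol(A+B)^{1/m} \geq \Vol(A)^{1/m} + \Vol(B)^{1/m},
\]
where $A+B$ denotes Minkowski sum. I will use this with $m=n-1$ after slicing $Q$ by hyperplanes orthogonal to $v$.

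The first step is to fix $\lambda_1, \lambda_2 \in \R$ for which both slices $Q_v^{\lambda_1}$ and $Q_v^{\lambda_2}$ are non-empty, and any $t \in [0,1]$; I want to show
\[
\Vol\bigl(Q_v^{t\lambda_1 + (1-t)\lambda_2}\bigr)^{1/(n-1)} \;\geq\; t\,\Vol(Q_v^{\lambda_1})^{1/(n-1)} + (1-t)\,\Vol(Q_v^{\lambda_2})^{1/(n-1)}.
\]
The two slices live in parallel affine hyperplanes, so I would pick points $p_i \in Q_v^{\lambda_i}$ and set $A_i = Q_v^{\lambda_i} - p_i$, so that $A_1$ and $A_2$ both lie in the hyperplane $H_0 = \{x : \langle v,x\rangle = 0\}$, which I identify with $\R^{n-1}$. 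The $(n-1)$-dimensional volumes $\Vol(A_i)$ coincide with $\Vol(Q_v^{\lambda_i})$ since translation is an isometry within the hyperplane.

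The convexity of $Q$ yields the key inclusion
\[
t \cdot Q_v^{\lambda_1} + (1-t) \cdot Q_v^{\lambda_2} \;\subseteq\; Q_v^{t\lambda_1 + (1-t)\lambda_2},
\]
because any convex combination $tx_1 + (1-t)x_2$ with $x_i \in Q_v^{\lambda_i}$ lies in $Q$ (by convexity) and in the hyperplane $\{x:\langle v,x\rangle = t\lambda_1+(1-t)\lambda_2\}$. Translating both sides by $-(tp_1+(1-t)p_2)$ yields $tA_1 + (1-t)A_2 \subseteq Q_v^{t\lambda_1+(1-t)\lambda_2} - (tp_1+(1-t)p_2)$, so taking $(n-1)$-dimensional volumes and applying Brunn--Minkowski in $\R^{n-1}$ to $tA_1$ and $(1-t)A_2$ gives
\[
\Vol\bigl(Q_v^{t\lambda_1+(1-t)\lambda_2}\bigr)^{1/(n-1)} \geq \Vol(tA_1)^{1/(n-1)} + \Vol((1-t)A_2)^{1/(n-1)}.
\]
Using the scaling identity $\Vol(sA) = s^{n-1}\Vol(A)$ for $s \geq 0$ in dimension $n-1$, the right-hand side becomes $t\,\Vol(Q_v^{\lambda_1})^{1/(n-1)} + (1-t)\,\Vol(Q_v^{\lambda_2})^{1/(n-1)}$, which is exactly the concavity inequality.

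The only point requiring care is the behavior at the boundary: the function is supported on the compact interval $\{\lambda : Q_v^\lambda \neq \emptyset\}$, where the argument above establishes concavity; outside this interval the volume slice is empty and the function vanishes, so concavity is understood on the support (this is the standard convention, sometimes called Brunn's concavity principle). I do not anticipate any real obstacle here since the result is classical; the only care needed is the translation step that moves both slices into a common $(n-1)$-dimensional ambient space before invoking Brunn--Minkowski.
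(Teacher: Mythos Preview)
Your proof is correct and follows the same approach as the paper: both establish the key inclusion $tQ_v^{\lambda_1}+(1-t)Q_v^{\lambda_2}\subseteq Q_v^{t\lambda_1+(1-t)\lambda_2}$ from convexity of $Q$ and then apply the Brunn--Minkowski inequality in dimension $n-1$. You are simply more explicit about the translation into a common hyperplane and about the boundary behavior, points the paper leaves implicit.
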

\begin{proof}
Take any $x,y\in\R$, $0\leq t\leq 1$ and let $z=tx+(1-t)y$. Since $Q$ is a convex body, $Q_v^z$ contains the Minkowski sum $tQ_v^x+(1-t)Q_v^y$. Then by the Brunn--Minkowski theorem, 
$$\Vol(Q_v^z)^{\frac{1}{n-1}}\geq\Vol(tQ_v^x+(1-t)Q_v^y)^{\frac{1}{n-1}}\geq t\Vol(Q_v^x)^{\frac{1}{n-1}}+(1-t)\Vol(Q_v^y)^{\frac{1}{n-1}}.$$
This establishes the concavity of the function $\lambda\mapsto\Vol(Q_v^{\lambda})^{\frac{1}{n-1}}.$
\end{proof}

\begin{prop}\label{prop:bm}
Let $Q\subseteq\R^n$ be a full-dimensional compact convex body with centroid $c_Q$. Let $m\geq0$ and $v\in\R^n$ be such that $\langle v,c_Q\rangle\geq\frac{-m}{n+1}$. Suppose that $\min_{x\in Q}\langle v,x\rangle=-1$ and $\max_{y\in Q}\langle v,y\rangle\geq\epsilon>0$. Then 
\[
\frac{\Vol(Q_v^+)}{\Vol(Q)}\geq
\begin{cases}
\displaystyle\frac{\epsilon}{(\epsilon+1) e^{1+(m-1)/\epsilon}},&m\geq1, \\
\displaystyle\frac{\epsilon+1-m}{(\epsilon+1)e},&0\leq m<1.
\end{cases}
\]
\end{prop}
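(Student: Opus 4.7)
The plan is to reduce the proposition to a one-dimensional integral inequality using Brunn-Minkowski, and then combine the concavity of the resulting slicing function with the centroid hypothesis to rule out the worst-case ``cone'' configuration.

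Set $f(\lambda)=\Vol(Q_v^\lambda)$ and $V=\Vol(Q)$, so $V=\int_{-1}^b f\,d\lambda$ and $\Vol(Q_v^+)=\int_0^b f\,d\lambda$ where $b=\max_{x\in Q}\langle v,x\rangle\geq\epsilon$; the centroid hypothesis becomes $\int_{-1}^b\lambda f(\lambda)\,d\lambda\geq -\tfrac{m}{n+1}V$.  By Lemma~\ref{lem:Brunn-Minkowski}, $g:=f^{1/(n-1)}$ is concave on $[-1,b]$, with $g(-1)=0$ since $\lambda=-1$ is a supporting value of $\langle v,\cdot\rangle$ on $Q$.  Two consequences of the concavity of $g$ (using $g(-1)=0$ and $g(b)\geq 0$) will be used: the chord inequality $g(\lambda)\geq g(0)(b-\lambda)/b$ on $[0,b]$, which gives the lower bound $\Vol(Q_v^+)\geq f(0)\,b/n\geq f(0)\epsilon/n$; and the monotonicity of $g(\lambda)/(b-\lambda)$, which yields $g(\lambda)\leq g(0)(b-\lambda)/b$ on $[-1,0]$ and hence $f(\lambda)\leq f(0)((b-\lambda)/b)^{n-1}$ there.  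These bounds by themselves imply only the unconditional ``cone'' estimate $\Vol(Q_v^+)/V\geq (b/(b+1))^n$, which decays to zero as $n$ grows, so the centroid hypothesis must be brought in.

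The key point is that the cone $g(\lambda)=c(b-\lambda)$ saturating the above chord bounds has centroid $(b-n)/(n+1)$, so the hypothesis $\langle v,c_Q\rangle\geq -m/(n+1)$ rules out this cone whenever $b<n-m$, forcing $Q$ to deviate from it.  By a variational argument, among concave $g$ on $[-1,b]$ with $g(-1)=0$ and $\int_{-1}^b\lambda g^{n-1}\,d\lambda/\int_{-1}^b g^{n-1}\,d\lambda=-m/(n+1)$, the minimum of $\int_0^b g^{n-1}\,d\lambda/\int_{-1}^b g^{n-1}\,d\lambda$ is achieved by the piecewise linear ``tent'' with a single break at $\lambda^*=(1-m-b)/(n-1)$, for which the ratio equals $b^n(n-1)^{n-1}/((bn+m-1)^{n-1}(b+1))$.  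Specializing to $b=\epsilon$ (the worst choice) and rewriting as $\tfrac{\epsilon}{\epsilon+1}(1-x)^{n-1}$ with $x=(m+\epsilon-1)/(\epsilon n+m-1)$, the elementary inequality $(1-x)^t\geq e^{-tx/(1-x)}$ combined with the simplification $(n-1)x/(1-x)=1+(m-1)/\epsilon$ yields the claimed bound $\epsilon/((\epsilon+1)e^{1+(m-1)/\epsilon})$ when $m\geq 1$.  For $0\leq m<1$ the extremal break sits at $\lambda^*\geq 0$ instead, and the parallel computation gives $(\epsilon+1-m)/((\epsilon+1)e)$, which recovers the Gr\"unbaum bound $1/e$ at $m=0$.

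The main obstacle is rigorously justifying that the tent configuration is genuinely extremal, which requires either a variational/perturbation argument on the space of concave functions subject to the centroid integral constraint, or else a direct argument that combines the pointwise concavity bounds with the centroid inequality without first passing to the tent.  Once extremality is in hand, the remaining steps---the explicit integration and the application of $(1-x)^t\geq e^{-tx/(1-x)}$---are routine.
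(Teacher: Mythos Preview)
Your overall strategy---reducing to the one-dimensional profile via Brunn--Minkowski, comparing to a piecewise-linear ``tent,'' and then bounding with an exponential inequality---is exactly what the paper does, and your closed-form ratio $b^n(n-1)^{n-1}/((bn+m-1)^{n-1}(b+1))$ together with the simplification via $(n-1)x/(1-x)=1+(m-1)/\epsilon$ is correct.  The genuine gap is the one you yourself flag: you invoke a ``variational argument'' to assert that the tent is extremal among concave profiles with the prescribed centroid, but never supply it, and this is the substance of the proof rather than a technicality.

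The paper avoids any variational step by an explicit construction followed by a stochastic-dominance argument.  Rather than optimizing over concave profiles, it builds a specific piecewise-linear $g$ by (i) fixing $g(0)=f(0)$, (ii) taking the right arm of $g$ linear from $(0,f(0))$ to $(u,0)$ with $u$ determined by $\int_0^u g^{n-1}=\int_0^{b} f^{n-1}$ (concavity of $f$ forces $u\ge b\ge\epsilon$), and (iii) extending that same line backward to some $(a,g(a))$ with $a\le 0$ and then dropping linearly to $(-1,0)$ so that $\int_{-1}^0 g^{n-1}=\int_{-1}^0 f^{n-1}$.  Because $g$ lies below $f$ on the outer portions of each half-interval while the totals match, one gets $\int_{-1}^t f^{n-1}\,d\lambda\ge\int_{-1}^t g^{n-1}\,d\lambda$ for all $t$, which says exactly that the centroid of the resulting double cone $D$ satisfies $(c_D)_1\ge(c_Q)_1\ge -m/(n+1)$.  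The centroid hypothesis thus becomes a concrete inequality on the cone parameter $h_2=u-a$, namely $h_2\le (nu+m-1)/(n-1)$, and your ratio formula follows with $u$ in place of $b$; monotonicity in $u$ then lets you replace $u$ by $\epsilon$.  This construction-plus-dominance argument is short and completely bypasses the extremality claim over function spaces; it is what you should put in place of the variational black box.
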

\begin{proof}
Notice that $v$ and $Q$ can be rotated together and can be scaled by inverse factors while preserving the hypotheses. Thus $v$ can be taken to be $(1,0,\ldots,0)$. Consider the concave function
\[
f(x_1)=\Vol(Q_v^{x_1})^{\frac{1}{n-1}}
\]
in the variable $x_1$ and its graph in the plane; the function $f$ is supported on $[-1, u']$ for some $u'\geq\epsilon>0$. 

We now construct a certain piecewise linear function $g$ (see Figure~\ref{fig:double-cone}). The function $g$ is supported on $[-1,u]$ and consists of two linear segments connecting vertices $(-1,0)$, $(a,g(a))$, and $(u,0)$. Here $u \geq 0$ is determined by requiring that $g(0)=f(0)$ and by the condition
\begin{equation}
\label{eq:f-g-integral1}
\int_{0}^{u'}f(x_1)^{n-1}dx_1=\int_{0}^u g(x_1)^{n-1}dx_1. 
\end{equation}
By the concavity of $f$, the line segment connecting $(0,f(0))$ and $(u',0)$ lies below $f$, so $u\geq u'\geq\epsilon$. Next, $a \leq 0$ is determined by requiring that 
\begin{equation}
\label{eq:f-g-integral2}
\int_{-1}^0 f(x_1)^{n-1}dx_1=\int_{-1}^0g(x_1)^{n-1}dx_1.
\end{equation}
A valid choice of $a$ exists because of the concavity of $f$: the line segment connecting $(-1,0)$ and $(0,f(0))$ lies below $f$, and, for $x_1 \leq 0$, $f(x_1)$ lies below the line connecting $(0,f(0))$ and $(u,0)$. Thus, by continuity, the integrals are equal for some $-1 \leq a \leq 0$.

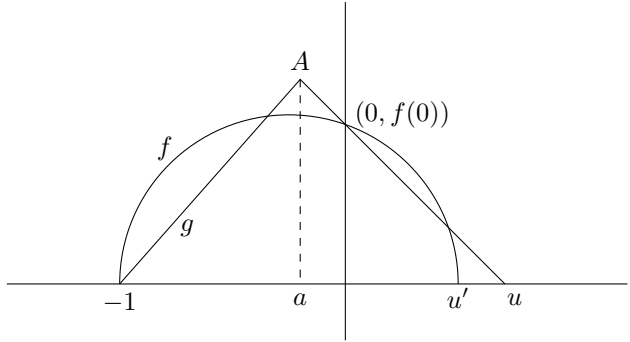
\begin{figure}[ht!]
\centering
\begin{tikzpicture}[scale=1.5]
\draw (-2.5,0)--(2,0);
\node[right] at (2.1,0) {$x_1$};
\draw(0,-0.5)--(0,2.5);
\draw (1,0) arc (0:180:1.5);
\draw (-2,0)--(-0.4,1.8142)--(1.4142,0);
\draw[dashed] (-0.4,1.8142)--(-0.4,0);
\node[above] at (-0.4,1.8142) {};
\node[below] at (1,0.05) {$u'$};
\node[below] at (1.5,0) {$u$};
\node[right] at (0,1.5) {$(0,f(0))$};
\node[below] at (-2,0) {$-1$};
\node at (-1.6,1.2) {$f$};
\node at (-1.4,0.5) {$g$};
\node[below] at (-0.4,0) {$a$};
\end{tikzpicture}
\caption{The double cone construction for Proposition~\ref{prop:bm}.  This figure is very similar to Figure 1 from \cite{Kahn-Linial}, which represents a special case.}
\label{fig:double-cone}
\end{figure}

We now construct a double cone $D$ from the function $g$. To do this, pick any compact convex body $H$ in the hyperplane $x_1=a$ which contains $(a,0,\ldots,0)$ and has $(n-1)$-dimensional volume $g(a)^{n-1}$. Let $D_1$ be the cone over $H$ with apex $(-1,0,\ldots,0)$ and $D_2$ be the cone over $H$ with apex $(u,0,\ldots,0)$; let $D=D_1\cup D_2$. It is then clear that $g(x_1)=\Vol(D_v^{x_1})^{\frac{1}{n-1}}$. By construction of $g$, we have $\Vol(D^+_v)=\Vol(Q^+_v)$ and $\Vol(D^-_v)=\Vol(Q^-_v)$. Writing $h_1=a+1>0$ and $h_2=u-a>0$ for the heights of the cones $D_1$ and $D_2$, with $h_1+h_2=u+1$, we have 
\[
\Vol(D_1)/\Vol(D_2)=h_1/h_2.
\]

For some $-1 \leq a' \leq 0$, we have $f(x_1) \geq g(x_1)$ for $-1 \leq x_1 \leq a'$ and $g(x_1) \geq f(x_1)$ for $a' \leq x_1 \leq 0$. By (\ref{eq:f-g-integral2}), this implies that 
\begin{equation}
\label{eq:f-g-integral3}
\int_{-1}^t f(x_1)^{n-1}dx_1\geq \int_{-1}^t g(x_1)^{n-1}dx_1
\end{equation}
for all $-1\leq t\leq 0$. A similar argument for positive $x_1$ using (\ref{eq:f-g-integral1}) shows that (\ref{eq:f-g-integral3}) in fact holds for all $-1 \leq t \leq u$, with equality at $u$. This tells us that $(c_D)_1 \geq (c_Q)_1 \geq\frac{-m}{n+1}$, where we write $(p)_1$ for the $x_1$-coordinate of $p \in \mathbb{R}^n$. 

The ratio of interest is
\[
\frac{\Vol(Q_v^+)}{\Vol(Q)}=\frac{\Vol(D_v^+)}{\Vol(D)}=\frac{\Vol(D_v^+)}{\Vol(D_2)}\frac{\Vol(D_2)}{\Vol(D)}=\left(\frac{u}{h_2}\right)^{n}\frac{h_2}{h_1+h_2}=\left(\frac{u}{h_2}\right)^{n-1}\frac{u}{u+1}.
\]

We compute
\begin{align*}
(c_{D_1})_1&= \frac{\int_{-1}^a x \left(g(a) \frac{x+1}{a+1}\right)^{n-1} dx}{\int_{-1}^a \left(g(a) \frac{x+1}{a+1}\right)^{n-1} dx} \\
&=\frac{-1+na}{n+1} 
\end{align*}
and similarly $(c_{D_2})_1=\frac{u+na}{n+1}$. Thus
\begin{align*}
-\frac{m}{n+1} & \leq (c_Q)_1  \leq (c_D)_1 \\
&= \frac{1}{h_1+h_2}\left(h_1\frac{-1+na}{n+1}+h_2\frac{u+na}{n+1}\right) \\
&=\frac{nu-1-(n-1)h_2}{n+1}.
\end{align*}

Rearranging, we obtain 
\begin{align*}
h_2&\leq\frac{nu+m-1}{n-1}, \\
\frac{u}{h_2}&\geq \frac{n-1}{n+(m-1)/u}.
\end{align*}

Before the final step, note that $e^x\geq 1+x$ for all $x\in\R$, so $e^{x/n}\geq 1+x/n$ and $e^x\geq(1+x/n)^n$ and $(\frac{n}{n+x})^n\geq e^{-x}$ for all $x\geq -n$. We now have that
\[
\frac{\Vol(Q_v^+)}{\Vol(Q)}=\left(\frac{u}{h_2}\right)^{n-1}\frac{u}{u+1} \geq \left(\frac{n-1}{n+(m-1)/u}\right)^{n-1}\frac{u}{u+1}\geq e^{-1-(m-1)/u}\frac{u}{u+1}.
\]
Recall that $u\geq u'\geq \epsilon$. Thus, if $m\geq1$, then
\[
\frac{\Vol(Q_v^+)}{\Vol(Q)}\geq\frac{\epsilon}{(\epsilon+1) e^{1+(m-1)/\epsilon}},
\]
and if $m\leq1$, then
\[
\frac{\Vol(Q_v^+)}{\Vol(Q)}\geq e^{-1}\left(1-\frac{m-1}{u}\right)\frac{u}{u+1}
=e^{-1}\frac{u+1-m}{u+1}\geq\frac{\epsilon+1-m}{(\epsilon+1)e},
\]
as desired.
\end{proof}

The following corollary to Proposition~\ref{prop:bm} is more useful in practice.

\begin{cor}\label{cor:bm}
Let $Q\subseteq\R^n$ be a full-dimensional compact convex body with centroid $c_Q$. Let $m\geq1$ and $v\in\R^n$ be such that $\langle v,c_Q\rangle\geq\frac{-m}{n+1}$. Suppose that $\min_{x\in Q}\langle v,x\rangle\leq-1$ and $\max_{y\in Q}\langle v,y\rangle\geq1$. Then
$$\frac{\Vol(Q_v^+)}{\Vol(Q)}\geq\frac{1}{2e^m}.$$
\end{cor}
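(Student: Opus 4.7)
The strategy is to reduce Corollary~\ref{cor:bm} to Proposition~\ref{prop:bm} by rescaling the normal vector. I would set $M = -\min_{x \in Q}\langle v, x\rangle$, which satisfies $M \geq 1$ by hypothesis, and replace $v$ by $\tilde v = v/M$; since $Q_v^+$ depends only on the half-space cut out by $v$, this substitution leaves $\Vol(Q_v^+)/\Vol(Q)$ unchanged. The hypotheses of Proposition~\ref{prop:bm} then hold for $\tilde v$ with $\min = -1$, $\max_y\langle\tilde v,y\rangle \geq 1/M$ (using the assumption $\max_y\langle v, y\rangle \geq 1$), and $\langle \tilde v, c_Q\rangle \geq -(m/M)/(n+1)$. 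Thus the proposition applies with $\epsilon = 1/M$ and with $m/M$ playing the role of the proposition's $m$.

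A preliminary observation is that both branches of the piecewise bound in Proposition~\ref{prop:bm} are monotonically nondecreasing in $\epsilon$ on $(0,\infty)$: for $m \geq 1$ a logarithmic-derivative computation gives $\tfrac{1}{\epsilon(\epsilon+1)} + \tfrac{m-1}{\epsilon^2} \geq 0$, and for $0 \leq m < 1$ the expression is $\tfrac{1}{e}\bigl(1 - \tfrac{m}{\epsilon+1}\bigr)$, which is visibly increasing. So it is safe to use the smallest admissible value, $\epsilon = 1/M$, even if $\max_y\langle\tilde v, y\rangle$ is in fact larger.

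I would then split into two cases depending on whether $m/M \geq 1$ (equivalently $M \leq m$) or $m/M < 1$ (equivalently $M > m$). In the first case the proposition's first branch simplifies to $\tfrac{1}{(1+M)e^{1+m-M}}$, and the desired bound $\geq \tfrac{1}{2e^m}$ rearranges to the elementary inequality $2e^{M-1} \geq 1 + M$ for $M \geq 1$. In the second case the second branch simplifies to $\tfrac{1+M-m}{(1+M)e}$, and the desired bound rearranges to $2(1+M-m)e^{m-1} \geq 1 + M$; evaluated at $M = m$ this becomes $2e^{m-1} \geq 1+m$ (the same elementary inequality with $m$ in the role of $M$), and comparing $M$-derivatives (LHS has derivative $2e^{m-1} \geq 2$, RHS has derivative $1$) shows it persists for all $M \geq m$. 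The single underlying inequality $2e^{M-1} \geq 1+M$ for $M \geq 1$ follows from equality at $M = 1$ and the same derivative comparison.

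The only real obstacle here is bookkeeping: tracking how the rescaling shifts the parameters $\epsilon$ and $m$ of Proposition~\ref{prop:bm}, handling the two branches of its piecewise bound separately, and then checking the two elementary calculus inequalities. There is no new geometric content beyond the rescaling trick and the monotonicity observation about the proposition's bound in $\epsilon$.
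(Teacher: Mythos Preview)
Your proposal is correct and essentially identical to the paper's proof. The only cosmetic difference is that you rescale $v$ by $1/M$ while the paper rescales $Q$ by $1/z$ (their $z$ is your $M$); after that, both arguments apply Proposition~\ref{prop:bm} with $\epsilon = 1/M$ and $m/M$ in place of $m$, split into the same two cases, and reduce to the same elementary inequality $2e^{x-1}\geq 1+x$ for $x\geq 1$. Your monotonicity-in-$\epsilon$ remark is correct but unnecessary, since Proposition~\ref{prop:bm} already only assumes $\max_y\langle v,y\rangle \geq \epsilon$ rather than equality.
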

\begin{proof}
Let $\min_{x\in Q}\langle v,x\rangle=-z$ with $z\geq1$. Scale $Q$ by a factor of $1/z$ to obtain $Q'$. Then $\min_{x\in Q'}\langle v,x\rangle=-1$, $\max_{y\in Q'}\langle v,y\rangle \geq 1/z$, and $\langle v,c_{Q'}\rangle\geq\frac{-m/z}{n+1}$. Note that for $x\geq1$ we have $e^{x-1}\geq(1+x)/2$ (for example, by comparing Taylor series).

If $m/z\geq1$ (so $m\geq z\geq1$) Proposition~\ref{prop:bm} yields: 
\begin{align*}
\frac{\Vol(Q_v^+)}{\Vol(Q)}\geq&\frac{1/z}{(1/z+1)e^{1+(m/z-1)/(1/z)}}=\frac{1}{(z+1)e^{1+m-z}}\\
=&\frac{e^{z-1}}{z+1}\frac{1}{e^m}\geq\frac{1}{2e^m}.
\end{align*}
If instead $m/z\leq1$ (so $z\geq m\geq1$) Proposition~\ref{prop:bm} still yields:
\begin{align*}
\frac{\Vol(Q_v^+)}{\Vol(Q)}\geq&\frac{1/z+1-m/z}{(1/z+1)e}=\frac{z+1-m}{(z+1)e}=\frac{1}{e}\left(1-\frac{m}{z+1}\right)\\
\geq&\frac{1}{e}\left(1-\frac{m}{m+1}\right)=\frac{e^{m-1}}{m+1}\frac{1}{e^m}\geq\frac{1}{2e^m}. \qedhere
\end{align*}
\end{proof}

\subsection{A uniform geometric approach via generalized order polytopes}\label{sec:order-polytopes-bound}
In order to apply Proposition~\ref{prop:bm}, we need some convex bodies. In this section, we define an analog of the order polytope of a poset (see \cite{Stanley-poset-polytopes}) for convex sets in any finite Weyl group, and apply these to finish the proof of Theorem~\ref{thm:uniform-bound}.

Let $\Phi$ be a root system of rank $r$ with highest root $\xi$. The \emph{fundamental alcove} is
\[
Q_{\id}\coloneqq\{x\in E\:|\:\langle x,\alpha\rangle\geq0\text{ for }\alpha\in\Phi^+,\langle x,\xi\rangle\leq1\}
\]
which lives inside the fundamental Weyl chamber $\overline{R_{\id}}$. For $w\in W(\Phi)$, define $$Q_{w}\coloneqq w^{-1}Q_{\id}=\{x\in E\:|\:\langle x,w^{-1}\alpha\rangle\geq0\text{ for }\alpha\in\Phi^+,\langle x,w^{-1}\xi\rangle\leq1\}\subseteq\overline{R_w}.$$

\begin{def-prop}\label{def:order-polytope}
Let $C\subseteq W(\Phi)$ be a convex set. The set
$$\O(C)\coloneqq \bigcup_{w\in C}Q_{w}$$
is a polytope called the \emph{generalized order polytope} of $C$.
\end{def-prop}
\begin{remark}
Our generalized order polytope $\O(C)$ is a special case of the \emph{alcoved polytopes} studied by Lam and Postnikov \cite{Lam-Postnikov}, which arise naturally from affine Coxeter arrangements.
\end{remark}

\begin{proof}
We need to show that $\O(C)$ is indeed a polytope, and we achieve this by listing the defining half-spaces of $\O(C)$. Suppose that
\[
C=W_D^A=\{w\in W\:|\: D\subseteq I_{\Phi}(w)\subseteq A\}.
\]
Then we already know that inequalities $\langle x,\alpha\rangle\leq0$ for $\alpha\in D$ and $\langle x,\beta\rangle\geq0$ for $\beta\in \Phi^+\setminus A$ cut out $\bigcup_{w\in C}\overline{R_w}$. Let $\O(C)'$ be the polytope cut out by the additional half-spaces $\langle x,w^{-1}\xi\rangle\leq1$ for all $w\in C$. We will show that $\O(C)'=\O(C)$.

Take any $x\in\O(C)$. We may assume without loss of generality that $x\in Q_{\id}$, meaning that $\langle x,\xi\rangle\leq1$ and $\langle x,\alpha\rangle\geq0$ for all $\alpha\in \Phi^+$. It suffices to show that $\langle x,w^{-1}\xi\rangle\leq1$. If $w^{-1}\xi\in\Phi^-$, then $\langle x,w^{-1}\xi\rangle\leq0<1$. If $w^{-1}\xi\in\Phi^+$, since $\xi$ is the maximum in the root poset $\Phi^+$, $\xi-w^{-1}\xi$ is a nonnegative linear combination of simple roots and thus $\langle x,\xi-w^{-1}\xi\rangle\geq0$, so 
\[
\langle x,w^{-1}\xi\rangle\leq\langle x,\xi\rangle\leq1
\]
as desired. This gives $\O(C)\subseteq\O(C)'$.

Take any $x\in\O(C)'$ and again without loss of generality assume that $x\in R_{\id}$. One of the defining half-spaces of $\O(C)'$ is $\langle x,\xi\rangle\leq1$ so $x\in Q_{\id}\subseteq\O(C)$. This gives $\O(C)'\subseteq\O(C)$.
\end{proof}

Recall that $\omega_1^{\vee},\ldots,\omega_r^{\vee}$ is the dual basis of $\alpha_1,\ldots,\alpha_r$ so we can write
\[
\xi=\sum_{i=1}^r\langle\omega_i^{\vee},\xi\rangle\alpha_i.
\]
We see that $Q_{\id}$ is an $r$-simplex with vertices $\{0\}\cup \{\omega_i^{\vee}/\langle\omega_i^{\vee},\xi\rangle \: | \: i=1,\ldots,r\}$. More explicitly, if we write $\xi=c_1\alpha_1+\cdots+c_r\alpha_r$, then the vertices of $Q_{\id}$ are $0,\omega_1^{\vee}/c_1,\ldots,\omega_r^{\vee}/c_r$. This means that the centroid of $Q_{\id}$ is $\frac{1}{r+1}\sum_{i=1}^r\omega_i^{\vee}/\langle\omega_i^{\vee},\xi\rangle$ and the centroid $o_C$ of the order polytope $\O(C)$ is
\[
o_C = \frac{1}{r+1}\frac{1}{|C|}\sum_{w\in C}w^{-1}\left(\sum_{i=1}^r\frac{\omega_i^{\vee}}{\langle\omega_i^{\vee},\xi\rangle}\right).
\]

\begin{lemma}\label{lem:order-polytope-centroid}
Let $C\subseteq W$ be a non-singleton convex set whose order polytope $\O(C)$ has centroid $o_C$. There exists $\beta\in\Phi^+$ such that $\emptyset\neq C_{\beta}\subsetneq C$ and $|\langle o_C,\beta\rangle|\leq m/(r+1)$ where 
\begin{align*}
m&=\frac{r}{m_0}+\frac{1}{m_1}-\frac{\height(\Phi)}{m_0m_1},\\
m_0&=\min_{i\in[r]}\langle\omega_i^{\vee},\xi\rangle,\\ m_1&=\max_{i\in[r]}\langle\omega_i^{\vee},\xi\rangle.
\end{align*}

\end{lemma}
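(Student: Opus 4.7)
My plan is to follow the strategy of Lemma~\ref{lem:select-root}, studying the linear functional
\[
\tilde h(\beta) \coloneqq (r+1)\langle o_C, \beta\rangle = \frac{1}{|C|}\sum_{w \in C}\sum_{i=1}^r\frac{\langle \omega_i^\vee, w\beta\rangle}{c_i'},
\]
where $c_i' \coloneqq \langle \omega_i^\vee, \xi\rangle$ are the marks of the highest root, and deriving a contradiction from the assumption that every $\beta \in \Phi^+$ with $\emptyset \neq C_\beta \subsetneq C$ satisfies $|\tilde h(\beta)| > m$. Two preparatory facts drive the argument. First, for any $\eta \in \Phi^+$ the relation $\eta \leq \xi$ in the root poset gives $\langle \omega_i^\vee, \eta\rangle \leq c_i'$, so $\sum_i\langle \omega_i^\vee, \eta\rangle/c_i' \leq r$; this remains valid (and nonpositive) when $\eta \in \Phi^-$, so $h_w(\eta) \leq r$ for every $w \in W$ and hence $\tilde h(\xi_v) \leq r$ for the highest root $\xi_v$ of \emph{any} positive system inside $\Phi$. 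Second, writing $H = \height(\Phi) = \sum_i c_i'$, a direct calculation gives the identity
\[
Hm - r = \frac{(H - m_0)(rm_1 - H)}{m_0 m_1} \geq 0,
\]
which will be used in the equivalent form $(H - m_0)(m - 1/m_1) = Hm - r$.

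Writing $C = W_D^A$ with $D \subseteq A \subseteq \Phi^+$, the sets $D$ and $\Phi^+ \setminus A$ are exactly the positive roots with $C_\beta = C$ and $C_\beta = \emptyset$, respectively. For $\beta \in D$ every $w\beta \in \Phi^-$ forces $\tilde h(\beta) \leq -1/m_1$, and for $\beta \in \Phi^+ \setminus A$ every $w\beta \in \Phi^+$ forces $\tilde h(\beta) \geq 1/m_1$. Combining with the contradiction hypothesis and $m \geq 1/m_1$, the functional $\tilde h$ does not vanish on $\Phi$, so the positive system $\Phi_v^+ \coloneqq \{\beta \in \Phi : \tilde h(\beta) > 0\}$ is well-defined, with simple roots $\Delta_v = \{\gamma_1,\ldots,\gamma_r\}$ and highest root $\xi_v = \sum_i c_i \gamma_i$. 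Since $\xi_v$ is Weyl-conjugate to $\xi$ through a map $\Delta \to \Delta_v$, the multiset $\{c_i\}$ equals $\{c_i'\}$; in particular each $c_i \geq m_0$ and $\sum_i c_i = H$.

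I would then call $\gamma_i$ \emph{bad} if $w\gamma_i \in \Phi^+$ for every $w \in C$, and \emph{good} otherwise. Writing $\beta_i$ for whichever of $\pm\gamma_i$ lies in $\Phi^+$, one checks that $\gamma_i$ bad forces $\beta_i \in D \cup (\Phi^+ \setminus A)$ while $\gamma_i$ good forces $\beta_i \in A \setminus D$ (in each direction using $\tilde h(\gamma_i) > 0$ to rule out the contradictory sub-case). The contradiction hypothesis then yields $\tilde h(\gamma_i) = |\tilde h(\beta_i)| > m$ for every good $\gamma_i$, while the bound $h_w(\gamma_i) \geq 1/m_1$ for bad $\gamma_i$ averages to $\tilde h(\gamma_i) \geq 1/m_1$. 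If \emph{every} $\gamma_i$ is bad then each $w \in C$ sends $\Delta_v \subseteq \Phi^+$, so $w \Phi_v^+ = \Phi^+$; by uniqueness of the Weyl group element carrying one positive system to another, $|C| = 1$, contradicting our hypothesis.

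In the remaining case some $\gamma_i$ is good, so with $H_G \coloneqq \sum_{\gamma_i \text{ good}} c_i \geq m_0$ and $H_B \coloneqq H - H_G \leq H - m_0$ one computes
\[
\tilde h(\xi_v) = \sum_i c_i \tilde h(\gamma_i) > H_G m + H_B/m_1 = Hm - H_B(m - 1/m_1) \geq Hm - (H - m_0)(m - 1/m_1) = r,
\]
contradicting $\tilde h(\xi_v) \leq r$. Hence the contradiction hypothesis fails and the required $\beta$ exists. The main obstacle is the final display: the precise shape of $m$ is exactly calibrated so that the inequality $H_G m + H_B/m_1 \geq r$ holds whenever $H_B \leq H - m_0$, and the good/bad dichotomy—rather than the crude lower bound $\tilde h(\gamma_i) \geq 1/m_1$ for all $i$, which would collapse the argument—is precisely what supplies the extra leverage needed to invoke it.
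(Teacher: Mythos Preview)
Your proof is correct and follows essentially the same route as the paper's: define the positive system $\Phi_v^+$ cut out by the linear functional $\tilde h$, split its simple roots into those with all $w\gamma_i\in\Phi^+$ (your ``bad'', the paper's $\gamma_i\notin A$) versus the rest, use the lower bounds $\tilde h(\gamma_i)\geq 1/m_1$ and $\tilde h(\gamma_i)>m$ respectively, and push the contradiction through $\tilde h(\xi_v)>r$ against the universal upper bound $\tilde h(\xi_v)\leq r$. Your treatment of the good/bad dichotomy via $\beta_i=\pm\gamma_i$ and the algebraic identity $(H-m_0)(m-1/m_1)=Hm-r$ are more explicit than the paper's, but the argument is the same.
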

Before the proof, we notice that $\height(\Phi)=\sum_{i=1}^r\langle\omega_i^{\vee},\xi\rangle$ is a sum of $r$ numbers with minimum $m_0$ and maximum $m_1$. This means $\height(\Phi)\leq m_0+(r-1)m_1$ and thus
$$m=\frac{1}{m_0}+\frac{m_0+(r-1)m_1-\height(\Phi)}{m_0m_1}\geq\frac{1}{m_0}\geq\frac{1}{m_1}.$$

\begin{proof}
The proof resembles that of Lemma~\ref{lem:select-root}. The condition that $\emptyset\neq C_{\beta}\subsetneq C$ says precisely that there are chambers lying on both sides of the hyperplane $H_{\beta}$ in $\O(C)$, so let us relax the requirement $\beta\in\Phi^+$ to $\beta\in\Phi$. 

If $\langle o_C,\beta\rangle=0$ for some $\beta\in\Phi$ then we are done, since there must be alcoves on both sides of the hyperplane $H_{\beta}$ for the centroid to lie on this hyperplane, so assume that $\langle o_C,\beta\rangle\neq0$ for all $\beta\in\Phi$. This implies that $\Phi^+_{o_C}\coloneqq\{\beta\in\Phi\:|\: \langle o_C,\beta\rangle>0\}$ is a system of positive roots; write $\gamma_1,\ldots,\gamma_r$ for the simple roots. Let $A$ be the set of $\gamma_j$'s such that there are alcoves on both sides of the hyperplane $H_{\gamma_j}$ in $\O(C)$. 

Suppose that $\gamma_i \not \in A$, meaning that $w\gamma_i\in\Phi^+$ for all $w\in C$ or $w\gamma_i\in\Phi^-$ for all $w\in C$. Then, noting that
$$(r+1)\langle o_C,\gamma_i\rangle=\frac{1}{|C|}\sum_{w\in C}\left\langle w^{-1}\sum_{i=1}^r\frac{\omega_i^{\vee}}{\langle\omega_i^{\vee},\xi\rangle},\gamma_i\right\rangle=\frac{1}{|C|}\sum_{w\in C}\left\langle \sum_{i=1}^r\frac{\omega_i^{\vee}}{\langle\omega_i^{\vee},\xi\rangle},w\gamma_i\right\rangle,$$
and that by assumption each term in the sum has the same sign, we conclude that in fact $w\gamma_i\in\Phi^+$ for all $w\in C$, since $\langle o_C,\gamma_i\rangle>0$. Since $|C| \geq 2$, not all $w \in C$ send $\Phi_{o_C}^{+}$ to $\Phi^+$. Thus $A$ is nonempty. Continuing with the above computation, we see that
\begin{equation}
\label{eq:m1-bound}
(r+1)\langle o_C,\gamma_i\rangle\geq 1/m_1
\end{equation}
since $\langle\sum_{i=1}^r\omega_i^{\vee}/\langle\omega_i^{\vee}/\xi\rangle,\beta\rangle\geq 1/m_1$ for all $\beta\in\Phi^+$.

Assume that $\langle o_C,\gamma_j\rangle>m/(r+1)$ for all $\gamma_j\in A$ and let $\xi_{\gamma}$ be the highest root with respect to the positive roots $\Phi_{o_C}^+$. Then
\begin{align*}
(r+1)\langle o_C,\xi_{\gamma}\rangle &=(r+1)\sum_{i=1}^r \langle \omega_i^{\vee},\xi\rangle\langle o_C,\gamma_i\rangle\\ 
&=(r+1)\sum_{\gamma_i \in A} \langle \omega_i^{\vee},\xi\rangle\langle o_C,\gamma_i\rangle + (r+1)\sum_{\gamma_i \not \in A} \langle \omega_i^{\vee},\xi\rangle\langle o_C,\gamma_i\rangle \\
&>m_0\cdot m+(\height(\Phi)-m_0)\cdot\frac{1}{m_1} \\
&=r,
\end{align*}
where in the inequality we have used (\ref{eq:m1-bound}), the fact that $A$ is nonempty, and the fact that $m \geq \frac{1}{m_1}$. But this is impossible since for any $\beta\in\Phi$
$$(r+1)\langle o_C,\beta\rangle=\frac{1}{|C|}\sum_{w\in C}\left\langle \sum_{i=1}^r\frac{\omega_i^{\vee}}{\langle\omega_i^{\vee},\xi\rangle},w\beta\right\rangle\leq \frac{1}{|C|}\sum_{w\in C}\left\langle \sum_{i=1}^r\frac{\omega_i^{\vee}}{\langle\omega_i^{\vee},\xi\rangle},\xi\right\rangle=r. $$
Thus one of the $\gamma_j \in A$ must have $\langle o_C,\gamma_j\rangle \leq m/(r+1)$.
\end{proof}
The expression for $m$ in Lemma~\ref{lem:order-polytope-centroid} does not look nice, but it is very reasonable in practice. In particular, for all of the infinite families of finite Weyl groups the value of $m$ does not depend on the rank $r$. See Table~\ref{tab:centroid-data} for a complete list of the values of $m$.
\begin{table}[ht!]
\centering
\begin{tabular}{|c|c|c|c|c||c|c|}
\hline
Type & $m_0$ & $m_1$ & $\height(\Phi)$ & $m$ & $mm_1$ & $b(C)\geq$\\\hline
$A_r$ & 1 & 1 & $r$ & 1 & 1 & $1/2e$  \\\hline
$B_r$ & 1 & 2 & $2r-1$ & 1 & 2 & $1/2e^2,1/2e$ \\\hline
$C_r$ & 1 & 2 & $2r-1$ & 1 & 2 & $1/2e^2,1/2e$ \\\hline
$D_r$ & 1 & 2 & $2r-3$ & 2 & 4 & $1/2e^4$\\\hline
$E_6$ & 1 & 3 & 11 & 8/3 & 8 & $1/2e^8$\\\hline
$E_7$ & 1 & 4 & 17 & 3 & 12 & $1/2e^{12}$\\\hline
$E_8$ & 2 & 6 & 29 & 7/4 & 21/2 & $1/2e^{10.5}$\\\hline
$F_4$ & 2 & 4 & 11 & 7/8 & 7/2 & $1/2e^{3.5}$ \\\hline
$G_2$ & 2 & 3 & 5 & 1/2 & 5/2 & $1/2e^{2.5}, 1/3$ \\\hline
\end{tabular}
\caption{The parameters and bounds appearing in Theorem~\ref{thm:uniform-bound-specific} for each irreducible type.  The second listed lower bounds for types $B_r$ and $C_r$ follow from Theorem~\ref{thm:typeB-better-bound}, while the bound of $1/3$ for type $G_2$ is immediate for any rank-two Coxeter group.}
\label{tab:centroid-data}
\end{table}

\begin{theorem}\label{thm:uniform-bound-specific}
Let $C\subseteq W(\Phi)$ be a non-singleton convex set. Then, using the notation of Lemma~\ref{lem:order-polytope-centroid}, \[
b(C)\geq1/2e^{mm_1}.
\]
\end{theorem}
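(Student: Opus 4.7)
The plan is to apply Corollary~\ref{cor:bm} to the generalized order polytope $\O(C)$ cut by the hyperplane orthogonal to the root $\beta$ supplied by Lemma~\ref{lem:order-polytope-centroid}, after rescaling $\beta$ by a factor of $m_1$ so that the extent hypothesis of the corollary is met.

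Apply Lemma~\ref{lem:order-polytope-centroid} to obtain $\beta\in\Phi^+$ with $\emptyset\neq C_\beta\subsetneq C$ and $|\langle o_C,\beta\rangle|\leq m/(r+1)$. The polytope $\O(C)$ is the essentially disjoint union of the alcoves $Q_w$ for $w\in C$, each of which has the same volume, since $w^{-1}$ acts as an isometry. Moreover, $Q_w\subseteq\overline{R_w}$ lies in the half-space $\{x:\langle\beta,x\rangle\geq 0\}$ precisely when $\beta\notin I_\Phi(w)$. Therefore
\[
\frac{\Vol(\O(C)_\beta^+)}{\Vol(\O(C))}=1-\delta_C(\beta),\qquad \frac{\Vol(\O(C)_\beta^-)}{\Vol(\O(C))}=\delta_C(\beta).
\]

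The extent condition of Corollary~\ref{cor:bm} is verified as follows. The vertices of $Q_{\id}$ are $0$ and $\omega_i^\vee/c_i$ with $c_i=\langle\omega_i^\vee,\xi\rangle\leq m_1$. For any $w\in C\setminus C_\beta$, write $w^{-1}\beta=\sum_j d_j\alpha_j\in\Phi^+$; the crystallographic condition forces the $d_j$ to be non-negative integers with at least one $d_j\geq 1$. Evaluating $\langle\beta,\cdot\rangle$ at the vertex $w^{-1}(\omega_j^\vee/c_j)$ of $Q_w$ yields $d_j/c_j\geq 1/m_1$, so $\max_{x\in\O(C)}\langle\beta,x\rangle\geq 1/m_1$. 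The symmetric argument applied to some $w\in C_\beta$ gives $\min_{x\in\O(C)}\langle\beta,x\rangle\leq -1/m_1$.

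With these preparations, apply Corollary~\ref{cor:bm} to $Q=\O(C)$ in ambient dimension $n=r$, with vector $v=m_1\beta$ and corollary-parameter $mm_1$: the factor $m_1$ rescales the extent bounds to $\pm 1$, the centroid condition becomes $\langle v,o_C\rangle\geq -mm_1/(r+1)$, and $mm_1\geq 1$ by the inequality $m\geq 1/m_0\geq 1/m_1$ noted after Lemma~\ref{lem:order-polytope-centroid}. The corollary yields $1-\delta_C(\beta)\geq 1/2e^{mm_1}$. Re-running the argument with $v=-m_1\beta$, which is justified by the other half $\langle o_C,-\beta\rangle\geq -m/(r+1)$ of the absolute-value bound from the lemma, gives $\delta_C(\beta)\geq 1/2e^{mm_1}$. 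Hence $b(C)\geq\min(\delta_C(\beta),1-\delta_C(\beta))\geq 1/2e^{mm_1}$, completing the proof. The only substantive step beyond bookkeeping is the extent verification, in which the crystallographic integrality of the coefficients of a root in the simple-root basis is the essential ingredient.
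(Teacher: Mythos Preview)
Your approach is essentially the same as the paper's: apply Lemma~\ref{lem:order-polytope-centroid} to locate $\beta$, verify that $\O(C)$ extends at least $1/m_1$ to each side of $H_\beta$ by examining vertices of the relevant alcoves, then invoke Corollary~\ref{cor:bm} (you rescale the vector $v$ by $m_1$, the paper equivalently rescales the polytope). The identification of the volume ratios with $\delta_C(\beta)$ and $1-\delta_C(\beta)$ and the use of $\pm v$ to obtain both inequalities are also exactly as in the paper.

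There is one slip in your extent verification: for $w\in C\setminus C_\beta$ you should expand $w\beta$, not $w^{-1}\beta$, in the simple-root basis. Indeed, $\langle\beta,\,w^{-1}(\omega_j^\vee/c_j)\rangle=\langle w\beta,\,\omega_j^\vee\rangle/c_j$ by $W$-invariance of the form, and it is $w\beta$ (not $w^{-1}\beta$) that lies in $\Phi^+$ when $\beta\notin I_\Phi(w)$. With this correction the coefficients $d_j$ are the nonnegative integers in $w\beta=\sum_j d_j\alpha_j$ and the rest of your computation goes through verbatim.
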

\begin{proof}
We first observe that for any alcove $Q_w$ and any root $\alpha\in\Phi$, there exists $x\in Q_w$ such that $|\langle x,\alpha\rangle|\geq 1/m_1$. To see this, assume $w=\id$ and $\alpha\in\Phi^+$ without loss of generality. Recall that $Q_{\id}$ has vertices at 0 and $\omega_i^{\vee}/\langle\omega_i^{\vee},\xi\rangle$. Let $x=\omega_i^{\vee}/\langle\omega_i^{\vee},\xi\rangle$ for some $i$ such that $\alpha\geq\alpha_i$. Then 
\[
\langle x,\alpha\rangle\geq 1/\langle\omega_i^{\vee},\xi\rangle\geq 1/m_1.
\]

Now, using Lemma~\ref{lem:order-polytope-centroid}, pick $\beta\in\Phi^+$ such that $\emptyset\neq C_{\beta}\subsetneq C$ and $|\langle o_C,\beta\rangle|\leq m/(r+1)$. Since $\emptyset\neq C_{\beta}$, there exists $w\in C_{\beta}$, meaning that $\beta\in I_{\Phi}(w)$ and every $t\in Q_w$ satisfies $\langle t,\beta\rangle\leq0$. By the previous paragraph, there exists $x\in\O(C)$ such that $\langle x,\beta\rangle\leq -1/m_1$. Similarly, as $C_{\beta}\neq C$, there exists $y\in\O(C)$ such that $\langle y,\beta\rangle\geq 1/m_1$. Equivalently, since $\O(C)$ is convex, there are $x,y\in\O(C)$ such that $\langle x,\beta\rangle=-1/m_1$ and $\langle y,\beta\rangle=1/m_1$. If we scale $\O(C)$ by a factor of $m_1$ we obtain a polytope $\O(C)'$ with centroid $o_{C}'$ satisfying $|\langle o_{C}',\beta\rangle|\leq mm_1/(r+1)$ with $mm_1\geq1$ and such that $\min_{x\in \O(C)'}\langle x,\beta\rangle\leq-1$ and $\max_{y\in\O(C)'}\langle y,\beta\rangle\geq 1$.  Then by Corollary~\ref{cor:bm}:
\[
\frac{\Vol(\O(C)_{\beta}^+)}{\Vol(\O(C))}=\frac{\Vol({\O(C)'}_{\beta}^+)}{\Vol(\O(C)')}\geq\frac{1}{2e^{mm_1}}.
\]
By reflecting the polytope, we have 
\[
\frac{\Vol(\O(C)_{\beta}^-)}{\Vol(\O(C))}\geq\frac{1}{2e^{mm_1}}
\]
as well. At the same time, 
\begin{align*}
\Vol(\O(C)_{\beta}^-)&=\Vol(Q_{\id})\cdot |C_{\beta}|, \\ \Vol(\O(C)_{\beta}^+)&=\Vol(Q_{\id})\cdot |C\setminus C_{\beta}|.
\end{align*}
As a result, $\delta_C(\beta)$ witnesses the fact that $b(C)\geq 1/2e^{mm_1}$.
\end{proof}

\subsection{A special treatment for type $B_n$: a short-root order polytope}\label{sec:short-root-order-polytope}
In this section, we are able to improve the bound for type $B_n$ (and thus for type $C_n$ as well) due to the existence of another polytope similar to the order polytope. 

Let $\Phi$ be an irreducible root system of rank $r$ that is not simply laced (that is, $\Phi$ is of type $B_n,C_n,F_4,$ or $G_2$). Then there exists a \emph{highest short root} $\eta\in\Phi^+$ (see for example \cite[\S 3.20]{Humphreys}). In other words, for every root $\alpha\in\Phi^+$ of the same (Euclidean) length as $\eta$, we have $\eta\geq\alpha$ in the root poset. In particular, since the inner product $\langle \cdot , \cdot \rangle$ on $E$ is $W$-invariant, $\eta-w\eta$ is a nonnegative (integral) linear combination of the simple roots for any $w \in W$. As in Section~\ref{sec:order-polytopes-bound}, define
$$Q^{\eta}_{\id}\coloneqq\{x\in E\:|\:\langle x,\alpha \rangle \geq0\text{ for }\alpha\in\Phi^+,\langle x,\eta\rangle\leq1\}$$
and $Q_w^{\eta}\coloneqq w^{-1}Q_{\id}^{\eta}$. 

\begin{def-prop}
Let $C\subseteq W(\Phi)$ be a convex subset of a finite Weyl group that is not simply laced. The set
$$\O^{\eta}(C)\coloneqq\bigcup_{w\in C}Q^{\eta}_w,$$
is a polytope called the \emph{short-root order polytope} of $C$.
\end{def-prop}
The proof that $\O^{\eta}(C)$ is a polytope is exactly the same as that of Definition-Proposition~\ref{def:order-polytope}, using the fact that $\eta-w\eta$ is a nonnegative linear combination of the simple roots, so we omit it.

\begin{theorem}\label{thm:typeB-better-bound}
Let $C\subseteq W(B_n)$ be a non-singleton convex subset of the Weyl group of type $B_n$. Then 
\[
b(C)\geq1/2e.
\]
\end{theorem}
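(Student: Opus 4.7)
The plan is to mimic the argument of Theorem~\ref{thm:uniform-bound-specific}, but with the short-root order polytope $\O^{\eta}(C)$ in place of the generalized order polytope $\O(C)$, and with Lemma~\ref{lem:select-root} playing the role of Lemma~\ref{lem:order-polytope-centroid}. The crucial feature of type $B_n$ that makes this yield the improved bound $1/2e$ is the following direct computation: with the conventions of Section~\ref{sec:background-weyl-groups}, the fundamental coweights are $\omega_i^{\vee}=e_1+\cdots+e_i$ for every $i=1,\ldots,n$, and the highest short root is $\eta=e_1$, so $\langle\omega_i^{\vee},\eta\rangle=1$ uniformly in $i$. Consequently the vertices of $Q^{\eta}_{\id}$ are simply $0$ and $\omega_1^{\vee},\ldots,\omega_n^{\vee}$, and the centroid of $\O^{\eta}(C)$ is
\[
o^{\eta}_C=\frac{1}{n+1}\frac{1}{|C|}\sum_{w\in C}w^{-1}(\omega_1^{\vee}+\cdots+\omega_n^{\vee}).
\]
In particular $\langle o^{\eta}_C,\beta\rangle=\frac{1}{n+1}h(\beta)$ for every $\beta\in\Phi$, where $h$ is the linear functional of Lemma~\ref{lem:select-root}.

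First I would apply Lemma~\ref{lem:select-root} to the convex set $C$ to obtain a positive root $\beta\in\Phi^{+}$ with $|h(\beta)|<1$; the identity above then gives $|\langle o^{\eta}_C,\beta\rangle|<1/(n+1)$. Next I would verify that this $\beta$ automatically satisfies $\emptyset\neq C_{\beta}\subsetneq C$: if $C_{\beta}=\emptyset$ then $w\beta\in\Phi^{+}$ for every $w\in C$, so $\langle\omega_1^{\vee}+\cdots+\omega_n^{\vee},w\beta\rangle=\height(w\beta)\geq 1$ and hence $h(\beta)\geq 1$, a contradiction, and the case $C_{\beta}=C$ is handled symmetrically.

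The second step is to check the hypotheses of Corollary~\ref{cor:bm} for $\O^{\eta}(C)$ with $v=\beta$: the centroid bound gives $\langle\beta,o^{\eta}_C\rangle\geq -1/(n+1)$, so $m=1$ suffices. For the extremal values, pick $w\in C_{\beta}$ and write $-w\beta=\sum_j c_j\alpha_j$ with each $c_j\in\mathbb{Z}_{\geq 0}$ and at least one $c_i\geq 1$; then the vertex $w^{-1}\omega_i^{\vee}$ of $Q^{\eta}_w\subseteq\O^{\eta}(C)$ satisfies $\langle w^{-1}\omega_i^{\vee},\beta\rangle=\langle\omega_i^{\vee},w\beta\rangle=-c_i\leq -1$, so $\min_{x\in\O^{\eta}(C)}\langle x,\beta\rangle\leq-1$; the analogous choice of $w\in C\setminus C_{\beta}$ (nonempty by the previous paragraph) produces a point with inner product at least $1$.

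Applying Corollary~\ref{cor:bm} with $v=\beta$ and then with $v=-\beta$ gives $\Vol(\O^{\eta}(C)^{\pm}_{\beta})/\Vol(\O^{\eta}(C))\geq 1/2e$. Since all alcoves $Q^{\eta}_w$ are congruent under the orthogonal action of $W$, the two volume ratios equal $1-\delta_C(\beta)$ and $\delta_C(\beta)$ respectively, and the conclusion $b(C)\geq 1/2e$ follows. The only genuinely type-$B_n$ input is the observation $\langle\omega_i^{\vee},\eta\rangle=1$ that opens the door to working with $\O^{\eta}(C)$; the rest of the argument is a direct reprise of the proof of Theorem~\ref{thm:uniform-bound-specific} with the parameter $mm_1$ reduced from $2$ to $1$, so I do not expect any serious obstacle beyond carefully recording these parameter changes.
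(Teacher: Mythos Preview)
Your proposal is correct and follows essentially the same argument as the paper: both use the short-root order polytope $\O^{\eta}(C)$, observe that in type $B_n$ one has $\langle\omega_i^{\vee},\eta\rangle=1$ so that the centroid pairs with any root as $h(\beta)/(n+1)$, invoke Lemma~\ref{lem:select-root} to find $\beta$ with $|h(\beta)|<1$, verify $\emptyset\neq C_\beta\subsetneq C$ and the $\pm1$ extremal-value hypotheses via vertices $w^{-1}\omega_i^{\vee}$, and conclude by Corollary~\ref{cor:bm} with $m=1$. The only cosmetic difference is that you write the vertex pairing as $-c_i$ for a simple-root coefficient while the paper records it as lying in $\{-1,-2\}$.
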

\begin{proof}
In type $B_n$, $\eta=\alpha_1+\alpha_2+\cdots+\alpha_n$ and vertices of $Q^{\eta}_{\id}$ are precisely the origin $0$ and the fundamental coweights $\omega_1^{\vee},\omega_2^{\vee},\ldots,\omega_n^{\vee}$. The centroid of $Q^{\eta}_{\id}$ is thus $(\omega_1^{\vee}+\cdots+\omega_n^{\vee})/(n+1)$ and so the centroid $o_C$ of $\O^{\eta}(C)$ is
\[
o_C=\frac{1}{n+1}\frac{1}{|C|}\sum_{w\in C}w^{-1}(\omega_1^{\vee}+\cdots+\omega_n^{\vee}).
\]
For $\beta\in\Phi^+$, 
$$(n+1)\langle \beta,o_C\rangle=\frac{1}{|C|}\sum_{w\in C}\langle\beta, w^{-1}(\omega_1^{\vee}+\cdots+\omega_n^{\vee})\rangle=\frac{1}{|C|}\sum_{w\in C}\langle w\beta,\omega_1^{\vee}+\cdots+\omega_n^{\vee}\rangle$$
which is precisely $h(\beta)$ as defined in Lemma~\ref{lem:select-root}. By Lemma~\ref{lem:select-root}, we can choose $\beta\in\Phi^+$ such that $|h(\beta)|<1$. If $C_{\beta}=\emptyset$, then $w\beta$ is a positive root for every $w\in C$. This means $\langle w\beta,\omega_1^{\vee}+\cdots+\omega_n^{\vee}\rangle\geq1$ for every $w\in C$, yielding $h(\beta)\geq1$, a contradiction. Thus $C_{\beta}$ is nonempty. For an element $w \in C_{\beta}$, $w\beta$ is a negative root so $\langle\beta,w^{-1}\omega_i^{\vee}\rangle=\langle w\beta,\omega_i^{\vee}\rangle\in\{-1,-2\}$ for some $i$. As $w^{-1}\omega_i^{\vee}$ is a vertex of $Q^{\eta}_{w}\subseteq \O^{\eta}(C)$, there is a point $x\in \O^{\eta}(C)$ such that $\langle x,\beta\rangle\leq -1$. Similarly, if $C_{\beta}=C$ then $h(\beta)\leq-1$ which is impossible. Therefore there exists some $w\in C\setminus C_{\beta}$, which provides us with a vertex in $y\in Q^{\eta}_{w}\subseteq\O^{\eta}(C)$ with $\langle y,\beta\rangle\geq1$. By Corollary~\ref{cor:bm}, as $|\langle o_C,\beta\rangle|=\frac{|h(\beta)|}{n+1}<\frac{1}{n+1}$,
$$\frac{|C\setminus C_{\beta}|}{|C|}=\frac{\Vol(\O^{\eta}(C)_{\beta}^{+})}{\Vol(\O^{\eta}(C))},\frac{|C_{\beta}|}{|C|}=\frac{\Vol(\O^{\eta}(C)_{\beta}^{-})}{\Vol(\O^{\eta}(C))}\geq\frac{1}{2e}.\qedhere $$
\end{proof}
\begin{remark}
Theorem~\ref{thm:typeB-better-bound} improves the bound for $b(C)$ in type $B_n$ to match the bound for type $A_n$ obtained by Kahn and Linial \cite{Kahn-Linial}; we automatically obtain the same bound for type $C_n$ because the Weyl groups of types $B_n$ and $C_n$ are isomorphic as Coxeter groups.  It is possible to perform an analysis similar to that in Theorem~\ref{thm:uniform-bound-specific} with the short-root order polytope in any non-simply-laced type. However no improvement is obtained for types $C_n$ or $F_4$, and it is easy to show directly that the balancing constant is at least $1/3$ for type $G_2$. 
\end{remark}

\section{Some instructive examples}
\label{sec:examples}

In this section we give several examples which show that the natural analogs of Conjecture~\ref{conj:1-3-2-3-weyl} and Theorem~\ref{thm:fully-comm} do not hold in the greatest possible generality of arbitrary convex sets $C$ in arbitrary Coxeter groups.

\begin{ex}
\label{ex:no-general-coxeter-bound}
For $n=3,4,\ldots$, let $W_n$ denote the Coxeter group whose Coxeter diagram is a complete graph $K_n$ (and any edge labels $m_{ij} \geq 3$).  Let $s_1,\ldots, s_n$ be the simple reflections, and let $C_n=\conv(s_1,\ldots,s_n) \subseteq W_n$.

It is easy to see that $C_n=\{\id, s_1, \ldots, s_n\}$ so that $\delta_{C_n}(s_i)=\frac{1}{n+1}$ for all $i$, while $\delta_{C_n}(t)=0$ for $t \not \in \{s_1,\ldots,s_n\}$.  Thus 
\[
b(C_n)=\frac{1}{n+1}.
\]
This example shows that there can be no uniform bound $b(C) \geq \varepsilon > 0$ valid for all non-singleton convex sets in all Coxeter groups.
\end{ex}

\begin{ex}
\label{ex:f-c-fails-in-4-cycle}
Let $W$ be the affine Weyl group $\widetilde{A_3}$, whose Coxeter diagram is a 4-cycle with all edge labels $m_{ij}=3$.  Let $w=s_2s_4s_1s_3$ be the fully commutative element whose heap poset $H_w$ is shown on the right below.
\begin{center}
    \begin{tikzpicture}
\node[draw,shape=circle,fill=black,scale=0.5](a)[label=below: {$s_1$}] at (-1,0) {};
\node[draw,shape=circle,fill=black,scale=0.5](b)[label=above: {$s_2$}] at (-1,1) {};
\node[draw,shape=circle,fill=black,scale=0.5](c)[label=above: {$s_3$}] at (0,1) {};
\node[draw,shape=circle,fill=black,scale=0.5](d)[label=below: {$s_4$}] at (0,0) {};

\draw (a)--(b) node [midway, fill=white] {$3$};
\draw (b)--(c) node [midway, fill=white] {$3$};
\draw (c)--(d) node [midway, fill=white] {$3$};
\draw (d)--(a) node [midway, fill=white] {$3$};
\end{tikzpicture}
\hspace{0.5in}
\begin{tikzpicture}
\node[draw,shape=circle,fill=black,scale=0.5](a)[label=below: {$s_1$}] at (0,0) {};
\node[draw,shape=circle,fill=black,scale=0.5](b)[label=below: {$s_3$}] at (1,0) {};
\node[draw,shape=circle,fill=black,scale=0.5](c)[label=above: {$s_2$}] at (0,1) {};
\node[draw,shape=circle,fill=black,scale=0.5](d)[label=above: {$s_4$}] at (1,1) {};

\draw (c)--(a)--(d);
\draw (c)--(b)--(d);
\end{tikzpicture}
\end{center}
A direct enumeration shows that 
\[
b([\id,w]_L)=\bi(H_w)=\frac{2}{7} < \frac{1}{3}.
\]
Thus Theorem~\ref{thm:fully-comm} may fail when $W$ is not acyclic.
\end{ex}

\begin{ex}
\label{ex:f-c-bound-but-not-3d}
Let $W$ be the rank-four Coxeter group whose Coxeter diagram is below on the left (the $\infty$-edges may be replaced with any labels $m_{12} \geq 4, m_{23} \geq 7, m_{34} \geq 4$).
\begin{center}
    \begin{tikzpicture}[scale=.8]
\node (x) at (0,0) {};
\node[draw,shape=circle,fill=black,scale=0.5](a)[label=below: {$s_1$}] at (-1.5,2) {};
\node[draw,shape=circle,fill=black,scale=0.5](b)[label=below: {$s_2$}] at (0,2) {};
\node[draw,shape=circle,fill=black,scale=0.5](c)[label=below: {$s_3$}] at (1.5,2) {};
\node[draw,shape=circle,fill=black,scale=0.5](d)[label=below: {$s_4$}] at (3,2) {};

\draw (a)--(b) node [midway, fill=white] {$\infty$};
\draw (b)--(c) node [midway, fill=white] {$\infty$};
\draw (c)--(d) node [midway, fill=white] {$\infty$};
\end{tikzpicture}
\hspace{0.5in}
\begin{tikzpicture}[scale=.9]
\node[draw,shape=circle,fill=black,scale=0.5](a)[label=below: {$\id$}] at (0,0) {};

\node[draw,shape=circle,fill=black,scale=0.5](b)[label=right: {$s_3$}] at (0,1) {};

\node[draw,shape=circle,fill=black,scale=0.5](c1)[label=left: {$s_4s_3$}] at (-.5,2) {};
\node[draw,shape=circle,fill=black,scale=0.5](c2)[label=right: {$s_2s_3$}] at (.5,2) {};

\node[draw,shape=circle,fill=black,scale=0.5](d1)[label=left: {$s_4s_2s_3$}] at (0,3) {};
\node[draw,shape=circle,fill=black,scale=0.5](d2)[label=below: {$s_1s_2s_3$}] at (1,3) {};
\node[draw,shape=circle,fill=black,scale=0.5](d3)[label=right: {$s_3s_2s_3$}] at (2,3) {};

\node[draw,shape=circle,fill=black,scale=0.5](e1)[label=left: {$v=s_1s_4s_2s_3$}] at (.5,4) {};
\node[draw,shape=circle,fill=black,scale=0.5](e2)[label=above: {$s_1s_3s_2s_3$}] at (1.5,4) {};
\node[draw,shape=circle,fill=black,scale=0.5](e3)[label=right: {$u=s_2s_3s_2s_3$}] at (2.5,4) {};

\draw (a)--(b);
\draw (c1)--(b)--(c2);
\draw (c1)--(d1)--(c2)--(d2);
\draw (c2)--(d3);
\draw (d1)--(e1)--(d2)--(e2)--(d3)--(e3);
\end{tikzpicture}
\end{center}
Let $u=s_2s_3s_2s_3$ and $v=s_1s_4s_2s_3$ and let $C=\conv(\id, u, v)$, shown above right as an order ideal in left weak order.  Then the inversions $t$ with $\delta_C(t)$ closest to $\frac{1}{2}$ are 
\begin{align*}
    \delta_C(s_3s_2s_3s_2s_3)=\delta_C(s_3s_4s_3)=\delta_C(s_3s_2s_1s_2s_3)&=\frac{3}{10}, \\
    \delta_C(s_3s_2s_3)&=\frac{7}{10},
\end{align*}
so $b(C)=\frac{3}{10} < \frac{1}{3}$.  Since $W$ is acyclic, Theorem~\ref{thm:fully-comm} implies that $b([\id,w]_L) \geq \frac{1}{3}$ for any fully commutative element $w$ (if all edge labels are $\infty$, any element $w \in W$ is fully commutative).  Thus this example defies the heuristic (generally believed for the symmetric group \cite{Brightwell}, and conjectured above for finite Weyl groups) that fully commutative intervals realize the smallest balance constants across all convex sets $C \subseteq W$.
\end{ex}

\bibliographystyle{plain}
\bibliography{revision2}
\end{document}